\newtheorem{Theorem}{Theorem}[section]
\newtheorem{Definition}[Theorem]{Definition}
\newtheorem{Lemma}[Theorem]{Lemma}
\newtheorem{Corollary}[Theorem]{Corollary}
\newtheorem{Remark}[Theorem]{Remark}
\newtheorem{Hypothesis}{Hypothesis}
\numberwithin{equation}{section}
\begin{document}

\def\le{\left}
\def\r{\right}
\def\cost{\mbox{const}}
\def\a{\alpha}
\def\d{\delta}
\def\ph{\varphi}
\def\e{\epsilon}
\def\la{\lambda}
\def\si{\sigma}
\def\La{\Lambda}
\def\B{{\cal B}}
\def\A{{\mathcal A}}
\def\L{{\mathcal L}}
\def\O{{\mathcal O}}
\def\bO{\overline{{\mathcal O}}}
\def\F{{\mathcal F}}
\def\K{{\mathcal K}}
\def\H{{\mathcal H}}
\def\D{{\mathcal D}}
\def\C{{\mathcal C}}
\def\M{{\mathcal M}}
\def\N{{\mathcal N}}
\def\G{{\mathcal G}}
\def\T{{\mathcal T}}
\def\R{{\mathbb R}}
\def\I{{\mathcal I}}

\def\bw{\overline{W}}
\def\phin{\|\varphi\|_{0}}
\def\s0t{\sup_{t \in [0,T]}}
\def\lt{\lim_{t\rightarrow 0}}
\def\iot{\int_{0}^{t}}
\def\ioi{\int_0^{+\infty}}
\def\ds{\displaystyle}
\def\pag{\vfill\eject}
\def\fine{\par\vfill\supereject\end}
\def\acapo{\hfill\break}

\def\beq{\begin{equation}}
\def\eeq{\end{equation}}
\def\barr{\begin{array}}
\def\earr{\end{array}}
\def\vs{\vspace{.1mm}   \\}
\def\rd{\reals\,^{d}}
\def\rn{\reals\,^{n}}
\def\rr{\reals\,^{r}}
\def\bD{\overline{{\mathcal D}}}
\newcommand{\dimo}{\hfill \break {\bf Proof - }}
\newcommand{\nat}{\mathbb N}
\newcommand{\E}{\mathbb E}
\newcommand{\Pro}{\mathbb P}
\newcommand{\com}{{\scriptstyle \circ}}
\newcommand{\reals}{\mathbb R}

\def\Amu{{A_\mu}}
\def\Qmu{{Q_\mu}}
\def\Smu{{S_\mu}}
\def\H{{\mathcal{H}}}
\def\Im{{\textnormal{Im }}}
\def\Tr{{\textnormal{Tr}}}
\def\E{{\mathbb{E}}}
\def\P{{\mathbb{P}}}

\title{Smoluchowski-Kramers approximation and  large deviations for infinite dimensional non-gradient systems with  applications to the exit problem}
\author{Sandra Cerrai, Michael Salins\\
\vspace{.1cm}\\
Department of Mathematics\\
 University of Maryland\\
College Park\\
 Maryland, USA
}

\date{}

\maketitle

\begin{abstract}
  In this paper, we study the  quasi-potential for a general class of  damped semilinear stochastic wave equations.  We show that, as the density of the mass converges to zero,   the infimum of the quasi-potential with respect to all possible velocities converges to the quasi-potential of the corresponding stochastic heat equation, that one obtains from the zero mass limit.
This shows in particular that the Smoluchowski-Kramers approximation is not only valid for small time, but, in the zero noise limit regime, can be used to approximate long-time behaviors such as exit time and exit place from a basin of attraction.
\end{abstract}

\section{Introduction}

In the present paper, we are dealing with  the following stochastic wave equation in a bounded regular domain $D\subset \reals^d$, with $d\geq 1$,
\begin{equation}
\label{semilinear-wave-eq-intro}
\le\{\begin{array}{l}
\ds{     \mu \frac{\partial^2 u^\mu_\e}{{\partial t}^2}(t,\xi) = \Delta u^\mu_\e(t,\xi) - \frac{\partial u^\mu_\e}{\partial t}(t,\xi)
  + B(u^\mu_\e(t))(\xi) + \sqrt{\epsilon}\,\frac{\partial w^Q}{\partial t}(t,\xi),\ \ \ \xi \in D,}\\
  \vs
  \ds{
     u^\mu_\e(0,\xi) = u_0(\xi),\ \ \ \
     \frac{\partial u^\mu_\e}{\partial t}(0,\xi) = v_0(\xi),\ \ \xi \in\,D,\ \ \ \ \      u^\mu_\e(t,\xi) = 0,\ \  \xi \in \partial D.}
     \end{array}\r.
     \end{equation}
Here $\partial w^Q/\partial t$ is a cylindrical Wiener process, white in time and colored in space, with covariance $Q^2$, and $\mu$ and $\e$ are small positive constants.

As a consequence of the Newton law, we may interpret the solution $u_{\e}^{\mu}(t,\xi)$ of equation \eqref{semilinear-wave-eq-intro} as the displacement field of the particles of a material continuum in the domain $D$, subject to a random external force field $\sqrt{\epsilon}\partial w^Q/\partial t(t,\xi)$ and a damping force proportional to the velocity field $\partial u_{\e}^{\mu}/\partial t(t,\xi)$. The Laplacian describes interaction forces between neighboring particles, in presence of a non-linear reaction described by $B$. The constant $\mu$ represents the constant density of the particles.

 In \cite{smolu2} and \cite{smolu1}, it has been proven that, for fixed $\epsilon>0$, as the density $\mu$ converges to $0$, the solution $u_{\e}^{\mu}(t)$ of problem \eqref{semilinear-wave-eq-intro} converges to the solution $u_\e(t)$ of the stochastic first order equation
 \begin{equation} \label{semilinear-heat-eq-intro}
\le\{\begin{array}{l}
\ds{     \frac{\partial u_\e}{\partial t}(t,\xi) = \Delta u_\e(t,\xi) + B(Òu_\e(t))(\xi) + \sqrt{\epsilon}\, \frac{\partial w^Q}{\partial t}(t,\xi),\ \ \ \ \xi \in D,} \\
 \vs
 \ds{    u_\e(0,\xi) = u_0(\xi),\ \ \ \ \xi \in\,D,\ \ \
     u_\e(t,\xi) = 0,\ \ \ \xi \in \partial D,}
\end{array}  \r.
 \end{equation}
uniformly for $t$ on fixed  intervals.
More precisely, we have shown that for any $\eta>0$ and $T>0$
\begin{equation}
\label{introsk}
\lim_{\mu\to 0}\,\Pro\le(\sup_{t \in\,[0,T]}|u^\mu_\e(t)-u_\e(t)|_H>\eta\r)=0.
\end{equation}
Such an approximation is known as the Smoluchowski-Kramers approximation.

\medskip

Once one has proved the validity of \eqref{introsk}, an  important question arises: how do some relevant asymptotic properties of the second and the  first order systems  compare, with respect to the small mass asymptotic?  In  \cite{f} and \cite{cf} the case of systems with a finite number of degrees of freedom has been studied and    the large deviation estimates, with the exit problem
from a domain,  various averaging procedures, the Wong-Zakai approximation, and  homogenization
have been compared. It has been proven  that in some cases the
two asymptotics do match together properly and in other cases they
don't.

In \cite{smolu2}, where the validity of the Smoluchowskii-Kramers approximation for SPDEs has been approached for the first time,  the long time behavior
of equations \eqref{semilinear-wave-eq-intro} and \eqref{semilinear-heat-eq-intro} has been compared, under the assumption that the two systems are of gradient type.
Actually, in the case of white noise in space and time (that is $Q=I$ and hence $d=1$) an
explicit expression for the Boltzman distribution of the process
$z^\mu_\e(t):=(u_\e^{\mu}(t), \partial u_\e^{\mu}/\partial t(t))$ in the phase space $\H:=L^2(0,1)\times H^{-1}(0,1)$ has been
given. Of course, since in the functional space $\H$ there is no
analogous of the Lebesgue measure,
 an auxiliary Gaussian measure has been introduced, with respect to which
 the density of the Boltzman distribution has  been written down. This
auxiliary Gaussian measure is the stationary measure of the linear
wave equation related to problem \eqref{semilinear-wave-eq-intro}.
In particular, it has been shown  that  the first marginal of the invariant measure
associated with the process $z_\e^{\mu}(t)$ does not depend on $\mu$ and  coincides with the
invariant measure of the process $u_\e(t)$, defined as the unique
solution of the heat equation \eqref{semilinear-heat-eq-intro}.

\medskip

In the present paper, we are interested in comparing the small noise asymptotics, as $\e\downarrow 0$, for system \eqref{semilinear-wave-eq-intro} and system \eqref{semilinear-heat-eq-intro}. Actually, we want to show that the Smoluchowski-Kramers approximation, that works on finite time intervals, is good also in the large deviations regime. More precisely, we want to compare the quasi-potential $V^\mu(x,y)$ associated with \eqref{semilinear-wave-eq-intro}, with the quasi-potential $V(x)$ associated with \eqref{semilinear-heat-eq-intro}, and we want to show that for any closed set $N \subset L^2(D)$ it holds
\begin{equation}
\label{intro10}
\lim_{\mu\to 0}\,\inf_{x \in N}\,V_\mu(x):=\lim_{\mu\to 0}\,\inf_{x \in N}\,\inf_{y \in\,H^{-1}(D)}V^\mu(x,y)=\inf_{x \in N}\,V(x).
\end{equation}

This means that taking first the limit as  $\e\downarrow 0$ (large deviation) and then taking the limit as $\mu\downarrow 0$ (Smoluchowski-Kramers approximation) is the same as first taking the limit as  $\mu\downarrow 0$ and then   as  $\e\downarrow 0$. In particular, this result provides a rigorous mathematical justification of what is done in  applications, when, in order to study rare events and transitions between metastable states for the more complicated system \eqref{semilinear-wave-eq-intro}, as well as exit times from basins of attraction and the corresponding exit places,  the relevant quantities associated with the large deviations for system \eqref{semilinear-heat-eq-intro} are considered.

\medskip

In our previous paper \cite{sal}, we have addressed this problem in  the particular case system \eqref{semilinear-wave-eq-intro} is of gradient type, that is
\begin{equation}
\label{intro19}
B(x)=-Q^2 D\!F(x),\ \ \ x \in\,L^2(D),\end{equation}
for some $F:L^2(D)\to \reals$, where $Q^2$ is the covariance of the Gaussian random perturbation. This applies for example to the linear case (that is $B=0$) in any space dimension or to the case
\[B(x)(\xi)=b(\xi,x(\xi)),\ \ \ \ \xi \in\,D,\]
when $D=[0,L]$ and $Q=I$. In \cite{sal} we have shown that,  if \eqref{intro19} holds, then    for any $\mu>0$
 \begin{equation}
 \label{intro11}
V^\mu(x,y) = \left|(-\Delta)^{1/2}Q^{-1} x \right|_{L^2(D)}^2 + 2F(x) + \mu \left|Q^{-1} y \right|_{L^2(D)}^2,
  \end{equation}
  for any $(x,y) \in\,D((-\Delta)^{1/2}Q^{-1})\times D(Q^{-1}).$
Therefore, as
\[V(x) = \left|(-\Delta)^{\frac{1}{2}} Q^{-1} x \right|_H^2 + 2F(x),\ \ \ \ x \in\,D((-\Delta)^{1/2}Q^{-1}), \]
from \eqref{intro11} we have concluded  that for any $\mu>0$,
\begin{equation}
\label{intro20}   V_\mu(x):= \inf_{y \in H^{-1}(D)} V^\mu(x,y) = V^\mu(x,0) = V(x),\ \ \ \ x \in\,D((-\Delta)^{1/2}Q^{-1}).
\end{equation}
In particular, this means that $V_\mu(x)$ does not just coincide with $V(x)$ at the limit, as in \eqref{intro10}, but for any fixed $\mu>0$.

\medskip

In the general non-gradient case that we are considering in the present paper, the situation is considerably more delicate and we cannot expect anything explicit as in \eqref{intro11}.  The lack of an explicit expression for $V^\mu(x,y)$ and $V(x)$ makes the proof of  \eqref{intro10} much more difficult and requires the introduction of new arguments and techniques.

The first key idea in order to prove \eqref{intro10} is to characterize $V^\mu(x,y)$ as the minimum value for a suitable functional. We recall that the quasi-potential $V^\mu(x,y)$ is defined as the minimum energy required to the system to go from the asymptotically stable equilibrium $0$ to the point $(x,y) \in\,\H$, in any time interval. Namely
\[V^\mu(x,y) = \inf \left\{ I^\mu_{0,T}(z) \ ;\  z(0) = 0,\  z(T) =(x,y),\  T > 0 \right\},\]
where
\[I^{\mu}_{0,T}(z) =\frac{1}{2}  \inf \left\{ | \psi|_{L^2((0,T);H)}^2\,:\, z=z^\mu_{\psi}\right\},
\] is the large deviation action functional
 and $z^\mu_{\psi} = (u^\mu_\psi, \partial u^\mu_\psi/\partial t)$ is a mild solution of the skeleton equation associated with equation  \eqref{semilinear-wave-eq-intro}, with control $\psi \in\,L^2((0,T);H)$,
\begin{equation}
\label{intro12}
\mu \frac{\partial^2 u^\mu_\psi}{\partial t^2}(t) = \Delta u^\mu_\psi(t) - \frac{\partial u^\mu_\psi}{\partial t}(t) + B(u^\mu_\psi(t)) + Q \psi(t),\ \ \ \ t \in\,[0,T].
\end{equation}
By working thoroughly with the skeleton equation \eqref{intro12}, we  show  that,
for small enough $\mu>0$,
\begin{equation}
\label{intro13}
  V^\mu(x,y) = \min \left\{ I^\mu_{-\infty,0}(z): \lim_{t \to -\infty} |z(t)|_{\H} =0,\  z(0)=(x,y) \right\}.\end{equation}
In particular, we get that the level sets of $V^\mu$ and $V_\mu$ are compact in $\H$ and $L^2(D)$, respectively. Moreover, we show that both $V^\mu$ and $V_\mu$ are well defined and continuous in suitable Sobolev spaces of functions. 
We would like to stress that in \cite{cerrok} a result analogous to \eqref{intro13} has been proved for equation \eqref{semilinear-heat-eq-intro} and $V(x)$, in terms of the corresponding functional $I_{-\infty,0}$. In both cases, the proof is highly non trivial, due to the degeneracy of the associated control problems, and requires a detailed analysis of the optimal regularity of the solution of the skeleton equation \eqref{intro12}. 

The second key idea is based on the fact  that,  as in \cite{cf} where the finite dimensional case is studied, for all functions $z \in\,C((-\infty,0];\H)$ that are regular enough,
\begin{equation} \label{intro14}
\begin{array}{l}
\ds{  I^\mu_{-\infty} (z) = I_{-\infty}(\varphi) + \frac{\mu^2}{2} \int_{-\infty}^0 \left| Q^{-1} \frac{\partial^2\varphi}{\partial t^2}(t) \right|_H^2 dt}\\
\vs
\ds{  + \mu \int_{-\infty}^0 \left< Q^{-1} \frac{\partial^2\varphi}{\partial t^2}(t), Q^{-1} \left( \frac{\partial\varphi}{\partial t}(t) - A \varphi(t) - B(\varphi(t)) \right) \right>_H dt=:I_{-\infty}(\varphi)+J^\mu_{-\infty}(z),}
\end{array}
\end{equation}
where $\varphi(t)=\Pi_1 z(t)$. Thus, if $\bar{z}^\mu$ is the minimizer of $V_\mu(x)$, whose existence is guaranteed by \eqref{intro13}, and if $\bar{z}^\mu$ has enough regularity to guarantee that all terms in \eqref{intro14} are meaningful, we obtain
\begin{equation}
\label{intro16}
V_\mu(x)=I_{-\infty}(\bar{\varphi}_\mu)+J^\mu_{-\infty}(\bar{z}^\mu)\geq V(x)+J^\mu_{-\infty}(\bar{z}^\mu).\end{equation}
In the same way, if $\bar{\varphi}$ is a minimizer for $V(x)$ and is regular enough, then
\begin{equation}
\label{intro17}
V_\mu(x)\leq I^\mu_{-\infty}(\bar{\varphi},\partial \bar{\varphi}/\partial t)=V(x)+J^\mu_{-\infty}((\bar{\varphi},\partial \bar{\varphi}/\partial t)).\end{equation}
If we could prove that
\begin{equation}
\label{intro18}
\liminf_{\mu\to 0}J^\mu_{-\infty}(\bar{z}^\mu)=\limsup_{\mu\to 0}J^\mu_{-\infty}((\bar{\varphi},\partial \bar{\varphi}/\partial t))=0,\end{equation}
from \eqref{intro16} and \eqref{intro17} we could conclude that \eqref{intro10} holds true. But unfortunately, neither $\bar{z}^\mu$ nor $\bar{\varphi}$ have the required regularity to justify \eqref{intro18}. Thus, we have to proceed with suitable approximations, which, among other things,  require us to prove the continuity of the mappings $V_\mu:D((-\Delta)^{1/2}Q^{-1})\to \reals$, uniformly with respect to  $\mu \in\,(0,1]$.

\medskip

In the second part of the paper we want to apply \eqref{intro10} to the study of the exit time and of the exit place of $u^\mu_\e$ from a given domain in $L^2(D)$ . For any open and bounded domain $G\subset L^2(D)$, containing the asymptotically stable equilibrium  $0$, and for any $z_0 \in\,G \times H^{-1}(D)$ we define the exit time
\[\tau^{\mu,\e}_{z_0}:=\inf\,\le\{t\geq 0\,:\,u^{\mu}_{\e,z_0}(t) \in\,\partial G\,\r\}.\]
Our first goal is to show that, for fixed $\mu>0$ and $z_0 \in\,G$,
\begin{equation}
\label{intro22}
\lim_{\e\to 0}\,\e\log\,\E\tau^{\mu,\e}_{z_0}=\inf_{x \in\,\partial G} V_\mu(x),
\end{equation}
and
\begin{equation} \label{intro23}
  \lim_{\e \to 0}\,\e\log\,(\tau^{\mu,\e}_{z_0}) = \inf_{x \in\,\partial G} V_\mu(x),\ \ \  \text{ in probability}.
\end{equation}
We also want to prove that if $N \subset \partial G$ has the property that $\ds{\inf_{x \in N} V_\mu(x) > \inf_{x \in \partial G} V_\mu(x) }$, then
\begin{equation} \label{intro24}
  \lim_{\e\to0}\, \Pro \left(u^\mu_{\e,z_0}(\tau^{\mu,\epsilon}_{z_0}) \in N  \right) =0.
\end{equation}

We would like to stress that the method we are using here in our infinite dimensional setting has several considerable differences compared to the classical finite dimensional argument developed in \cite{fw} (see also \cite{dz}).  The most fundamental difference between the two settings is that, unlike in the finite dimensional case,  in the infinite dimensional case the quasi-potentials $V_\mu$ are not continuous in $L^2(D)$. Nevertheless, we show here that the lower-semi-continuity of $V_\mu$ in $L^2(D)$ along with a convex type regularity assumption for the domain  $G$ are sufficient to prove our results. Another important difference is that $u^\mu_\e$ is not a Markov process, but the pair $(u^\mu_\e, \partial u^\mu_\e/\partial t)$ in the phase space $\H$ is.  For this reason, the exit time problem should be considered as the exit from the cylinder $G \times H^{-1} \subset \H$.  But, unfortunately, this is an unbounded domain, and as we show in section 3, the unperturbed trajectories are not uniformly attracted to zero from this cylinder. The methods we use to prove the exit time and exit place results should be applicable to most stochastic equations with second-order time derivatives.

In a similar manner, one can show that if 
\[\tau^\epsilon_{u_0} = \inf\{ t>0: u_\e(t) \not \in G\}\] is the exit time from $G$ for the solution of \eqref{semilinear-heat-eq-intro}, and $V(x)$ is the quasipotential associated with this system, the exit time and exit place results for the first-order system are analogous to \eqref{intro22}, \eqref{intro23}, and \eqref{intro24}.

As a consequence of \eqref{intro20}, in the gradient case, \eqref{intro22}, and \eqref{intro23} imply that,
for any fixed $\mu>0$, the exit time and exit place asymptotics of \eqref{semilinear-wave-eq-intro} match those of \eqref{semilinear-heat-eq-intro}. In particular, for any $\mu>0$
\begin{equation}
\label{i.1}\lim_{\e\to 0}\,\e\log\,\E\tau^{\mu,\e}_{z_0}=\inf_{x \in\,\partial G} V(x) = \lim_{\e \to 0} \e \log\,\E \tau^\epsilon_{u_0},\end{equation}
and
\begin{equation}
\label{i.2}
\lim_{\e \to 0} \e \log \tau^{\mu,\e}_{z_0} = \inf_{x \in\,\partial G} V(x) = \lim_{\e \to 0} \e \log\, \tau^\e_{u_0},\ \ \ \  \text{ in probability}.\end{equation}
Additionally, if there exists a unique $x^* \in \partial G$ such that $V(x^*) = \inf_{x \in \partial G} V(x)$, \eqref{intro24} implies that
\[\lim_{\epsilon \to 0} u^\mu_\epsilon(\tau^{\mu,\epsilon}) = x^* = \lim_{\e \to 0} u_\e(\tau^\e),\ \ \ \ \ \text{ in probability}.\]

In the general non-gradient case, we cannot have \eqref{i.1} and \eqref{i.2}. Nevertheless,  in view of \eqref{intro10}, the exit time and exit place asymptotics of \eqref{semilinear-wave-eq-intro} can be approximated by $V$. Namely
 \[\lim_{\mu \to 0}\lim_{\e\to 0}\,\e\log\,\E\tau^{\mu,\e}_{z_0}=\inf_{x \in\,\partial G} V(x) = \lim_{\e \to 0} \e \log\,\E \tau^\epsilon_{u_0},\]
and 
 \[\lim_{\mu \to 0}\lim_{\e \to 0} \e \log \tau^{\mu,\e}_{z_0} = \inf_{x \in\,\partial G} V(x) = \lim_{\e \to 0} \e \log\, \tau^\e_{u_0},\ \ \ \  \text{ in probability}.\]
Furthermore, if there exists a unique $x^* \in \partial G$ such that $V(x^*) = \inf_{x  \in \partial G} V(x)$, then
\[ \lim_{\mu \to 0} \lim_{\e \to 0} u^\mu_\epsilon(\tau^{\mu,\epsilon}) = x^* = \lim_{\e \to 0} u_\e(\tau^\e),\ \ \ \  \text{ in probability}.\]

\section{Preliminaries and assumptions}

Let $D$ be an open, bounded, regular domain in  $\mathbb{R}^d$, with $d\ge 1$ and let $H $ denote the Hilbert space $L^2(D)$.  In what follows, we shall denote by $A$  the realization in $H$ of the Laplace operator, endowed with Dirichlet boundary conditions, and we shall denote by   $\{e_k\}_{k \in\,\nat}$ and $\{-\a_k\}_{k \in\,\nat}$  the corresponding sequence of eigenfunctions and eigenvalues, with $0<\a_1\leq \a_k\leq \a_{k+1}$, for any $k \in\,\nat$.  Here, we assume that the domain $D$ is regular enough so that
\begin{equation}
\label{m18}
\a_k\sim k^{2/d},\ \ \ \ k \in\,\nat.
\end{equation}
For any $\d \in\,\reals$, we shall denote by $H^\d$ the completion of $C^\infty_0(D)$ with respect to the norm
\[|x|_{H^\d}^2=\sum_{k=1}^{+\infty} \a_k^{\d}\le<x,e_k\r>_H^2=\sum_{k=1}^\infty \a_k^\d x_i^2\]
$H^\d$ is a Hilbert space, endowed with the scalar product
\[\le<x,y\r>_{ H^\d}=\sum_{k=1}^{+\infty}\a_k^\d x_k y_k,\ \ \ x, y \in\,H^\d(D).\]
Finally, we shall denote by $\mathcal{H}_\d$ the Hilbert space $H^\d\times H^{\d-1}$ and in the case $\d=0$ we shall set $\mathcal{H}_0=\mathcal{H}$. Moreover, we shall denote
\[\Pi_1:\mathcal{H}_\d\to H^\d,\ \ (u,v)\mapsto u,\ \ \ \ \Pi_2:\mathcal{H}_\d\to H^{\d-1},\ \ (u,v)\mapsto v.\]
Sometimes, for the sake of simplicity, we will denote for any $\mu>0$ and $\d \in\,\mathbb{R}$
\begin{equation}
\label{m-1000}
I_\mu(u,v)=(u,\sqrt{\mu} v),\ \ \ \ (u,v) \in\,\mathcal{H}_\d.
\end{equation}

The stochastic perturbation  is given  by a cylindrical Wiener process $w^{Q}(t,\xi)$, for $t\geq 0$ and $\xi \in\,{\cal O}$, which
is assumed to be white in time and colored in space, in the case
of space dimension $d>1$. Formally, it  is defined as the infinite sum
\begin{equation}
\label{noise3}
w^{Q}(t,\xi)=\sum_{k=1}^{+\infty} Q e_{k}(\xi)\,\beta_{k}(t),
\end{equation} where
 $\{e_{k}\}_{k \in\,\nat}$ is the
complete orthonormal basis in $L^2(D)$ which diagonalizes $A$ and  $\{\beta_{k}(t)\}_{k
\in\,\nat}$ is a sequence of mutually independent standard
Brownian motions defined on the same complete stochastic basis
$(\Omega,\mathcal{F}, \mathcal{F}_t, \mathbb{P})$.
\begin{Hypothesis}
\label{H1}
The linear operator $Q$ is bounded in $H$ and diagonal with respect to the basis $\{e_k\}_{k \in\,\nat}$ which diagonalizes $A$. Moreover, if $\{\la_k\}_{k \in\,\nat}$ is the corresponding sequence of eigenvalues, we have
\begin{equation}
\label{m1}
\frac 1c \a_k^{-\beta}\leq \la_k\leq c\,\a_k^{-\beta},\ \ \ \ k \in\,\nat,
\end{equation}
for some $c>0$ and $\beta>(d-2)/4$.
\end{Hypothesis}
\begin{Remark}
{\em \begin{enumerate}
\item If $d=1$, according to Hypothesis \ref{H1} we can consider space-time white noise  ($Q=I$).
\item  Thanks to \eqref{m18}, condition \eqref{m1} implies that if $d\geq 2$, then there exists $\gamma<2d/(d-2)$ such that
\[\sum_{k=1}^\infty \la_k^\gamma<\infty.\]
Moreover
\[\sum_{k=1}^\infty \frac{\la_k^2}{\a_k}<\infty.\]
\item As a consequence of \eqref{m1}, for any $\d \in\,\reals$
\[D((-A)^{\d/2}Q^{-1})=H^{\d+2\beta}\]
and there exists $c_\d>0$ such that for any $x \in\,H^{\d+2\beta}$
\[\frac 1{c_\d}\,|(-A)^{\d/2}Q^{-1}x|_H\leq |x|_{\d+2\beta}\leq c_\d\,|(-A)^{\d/2}Q^{-1}x|_H\]
\end{enumerate}}
\end{Remark}

Concerning the nonlinearity $B$, we shall assume the following conditions.

\begin{Hypothesis}
\label{H2}
For any $\d \in\,[0,1+2\beta]$, the mapping $B:H^\d\to H^\d$ is Lipschitz continuous, with
\[ [B]_{\tiny{\text{{\em Lip}}}(H^{\delta})}=:\gamma_\d<\a_1.\]
Moreover $B(0)=0$.
We also assume that $B$ is differentiable in the space $H^{2\beta}$, and that $\sup_{z \in \H}\|DB(z)\|_{L(\H)} = \gamma_{2\beta}$.
\end{Hypothesis}

\begin{Remark}
{\em \begin{enumerate}
\item   The assumption that $B$ is differentiable is made for convenience to simplify the proof of lower bounds in Theorem \ref{t.82}. We believe that by approximating the Lipschitz continuous $B$ with a sequence of differentiable functions whose $C^2$ semi-norm is controlled by the Lipschitz semi-norm of $B$, the results proved in Theorem \ref{t.82} should remain true. 
\item If we define for any $x \in\,H$
\[B(x)(\xi)=b(\xi,x(\xi)),\ \ \ \xi \in\,D,\]
and we assume that $b(\xi,\cdot) \in\,C^{2k}(\reals)$, for $k \in\,[\beta+\d/2-5/4,\beta+\d/2-1/4]$, and
\[\frac{\partial^j b}{\partial \si^j}(\xi,\si)_{|_{\si=0}}=0,\ \ \ \xi \in\,\overline{D},\]
then $B$ maps $H^{\d}$ into itself, for any $\d \in\,[0,1+2\beta]$.
The Lipschitz continuity of $B$ in $H^{\d}$ and the bound on the Lipschitz norm, are satisfied if the derivatives of $b(\xi,\cdot)$ are small enough.
\end{enumerate}
}
\end{Remark}

With these notations, equation \eqref{semilinear-heat-eq-intro} can be written as the following abstract evolution equation in $H$
\begin{equation}
\label{abstract-heat}
du_\e(t)=\le[A u_\e(t)+B(u_\e(t))\r]\,dt+\sqrt{\e}\,dw^Q(t),\ \ \ \ u(0)=u_0.
\end{equation}

\begin{Definition}
A predictable process $u_\e \in\,L^2(\Omega;C([0,T];H))$ is a {\em mild solution} to equation \eqref{abstract-heat} if
\[u_\e(t)=e^{tA}u_0+\int_0^t e^{(t-s)A}B(u_\e(s))\,ds+\sqrt{\e}\,\int_0^t e^{(t-s)A}dw^Q(s).\]
\end{Definition}

Now, for each $\mu>0$ and $\d \in\,\reals$ we define $\Amu: D(\Amu)\subset \H_\d \to \H_\d$ by setting
\begin{equation}
 \Amu (u,v) = \left(-v, \frac{1}{\mu}Au - \frac{1}{\mu} v \right),\ \ \         (u,v)  \in\,D(A_\mu)=\mathcal{H}_{1+\d},
\end{equation}
and we denote by  $\Smu(t)$  the semigroup on $\mathcal {H}_\d$ generated by $\Amu$.  In \cite[Proposition 2.4]{smolu2}, it is proved that for each $\mu>0$ there exist $\omega_\mu>0$ and $ M_\mu>0$ such that
\begin{equation}
\label{m6}
\|S_\mu(t)\|_{\mathcal{L}(\mathcal{H})}\leq M_\mu\,e^{-\omega_\mu t},\ \ \ \ t\geq 0.
\end{equation}
Notice that, since for any $\d \in\,\reals$ and $(u,v) \in\,\H_\d$
\[\le((-A)^\d \Pi_1S_\mu(t)(u,v),(-A)^\d \Pi_2S_\mu(t)(u,v)\r)=S_\mu(t)((-A)^\d u,(-A)^\d v),\ \ \ t\geq 0,\]
\eqref{m6} implies that for any $\d \in\,\mathbb{R}$
\begin{equation}
\label{m17}
\|S_\mu(t)\|_{\mathcal{L}(\mathcal{H}_\d)}\leq M_\mu\,e^{-\omega_\mu t},\ \ \ \ t\geq 0.
\end{equation}

Next, for any $\mu>0$ we denote
\[B_\mu(u,v)=\frac 1\mu(0,B(u)),\ \ \ \ (u,v) \in\,\mathcal{H},\]
and
\[Q_\mu u=\frac 1\mu(0,Qu),\ \ \ \ u \in\,H.\]
With these notations, equation  \eqref{semilinear-wave-eq-intro} can be written as the following abstract evolution equation in the space $
\mathcal{H}$
\begin{equation}
\label{abstract}
dz(t)=\le[A_\mu z(t)+B_\mu(z(t))\r]\,dt+\sqrt{\e}\,Q_\mu\,dw(t),\ \ \ \ z(0)=(u_0,v_0).
\end{equation}

\begin{Definition}
\label{def2.4}
A predictable process $u^\mu_\e$
is a {\em mild solution} of \eqref{abstract} if
\[u^\mu_\e \in\,L^2(\Omega;C([0,T];H),\ \ \ \  v^\mu_\e=:\frac{\partial\, u^\mu_\e}{\partial t} \in\,L^2(\Omega;C([0,T];H^{-1}),\]
for any $T>0$, and
\begin{equation}
\label{semilinear-wave-mild-sol}
  z^{\mu}_{ \epsilon}(t) = \Smu(t)z(0) + \int_0^t \Smu(t-s) B_\mu(z^\mu_\e(s)) ds + \sqrt{\epsilon} \int_0^t \Smu(t-s)\Qmu dw(s),
\end{equation}
where  $z(0)=(u_0,v_0)$ and
$z^\mu_\e= \left(u^\mu_\e, v^\mu_\e \right)$.

\end{Definition}

In view of Hypothesis \ref{H1} and of the fact that $B:H\to H$ is Lipschitz continuous, for any $\mu>0$ and any initial condition $z_0=(u_0,v_0) \in\,\mathcal{H}$, there exists a unique mild solution $u^\mu_\e $ for equation \eqref{semilinear-wave-eq-intro},   (for a proof see e.g. \cite{smolu2}).
In \cite[Theorem 4.6]{smolu2} we have proved that for any fixed $\e>0$ and $T>0$ the solution $u^\mu_\e $ of equation \eqref{semilinear-wave-eq-intro} converges in $C([0,T];H)$, in probability sense,  to the solution $u_\e$ of equation \eqref{semilinear-heat-eq-intro}, as $\mu\downarrow 0$. Namely, for any $\eta>0$
\[\lim_{\mu\to 0}\,\Pro\le(\sup_{t \in\,[0,T]}|u^\mu_\e(t)-u_\e(t)|_H>\eta\r)=0.\]

\section{The unperturbed equation}

We consider here equation \eqref{abstract}, for $\e=0$. Namely,
\begin{equation}
\label{m-fine202}
\frac{dz}{dt}(t)=A_\mu z(t)+B_\mu(z(t)),\ \ \ \ z(0)=z_0=(u_0,v_0).
\end{equation}
The solution to \eqref{m-fine202} will be denoted by $z^\mu_{z_0}(t)$. We recall here that  $\gamma_0$ denotes the Lipschitz constant of $B$ in $H$ (see  Hypothesis \ref{H2}).

\begin{Lemma} \label{X-sup-bounded-lemma}
  If $\mu < (\alpha_1 - \gamma_0)\gamma_0^{-2}$, there exists a constant $c_1(\mu)>0$ such that
  \begin{equation} \label{X-L2-sup-bounded}
    \sup_{t \ge 0}\left|z^{\mu}_{z_0}(t) \right|_\H +  \left| z^{\mu}_{z_0} \right|_{L^2((0,+\infty);\H)} \leq c_1(\mu) |z_0|_\H,\ \ \ \ z_0 \in\,\H.
  \end{equation}
\end{Lemma}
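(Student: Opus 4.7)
The plan is to establish exponential decay of $z^\mu_{z_0}(t)$ in $\H$ via a Lyapunov-type argument for the damped hyperbolic equation \eqref{m-fine202}; both estimates follow immediately from such decay. I would introduce the modified energy
\[
\Psi(u,v) = |u|_H^2 + \mu|v|_{H^{-1}}^2 + 2\rho\,\langle u, v\rangle_{H^{-1}},
\]
where $\rho=\rho(\mu)>0$ is a small parameter. The cross term is the classical modification for damped wave equations: differentiating it produces a $-|u|_H^2$ dissipation contribution that the naive energy $|u|_H^2+\mu|v|_{H^{-1}}^2$ lacks. Cauchy--Schwarz in $H^{-1}$ combined with the Poincar\'e inequality $|u|_{H^{-1}}\leq \alpha_1^{-1/2}|u|_H$ yields $|2\rho\langle u,v\rangle_{H^{-1}}| \leq (\rho/\sqrt{\alpha_1})(|u|_H^2+|v|_{H^{-1}}^2)$, so for $\rho$ small enough with respect to $\mu$ (say $\rho\leq \mu\sqrt{\alpha_1}/4$) the functional $\Psi$ is equivalent to the $\H$-norm: $c(\mu)|z|_\H^2\leq \Psi(z) \leq C(\mu)|z|_\H^2$.

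Next I would differentiate $\Psi(z^\mu_{z_0}(t))$ along the flow, using $u_t=v$, $\mu v_t=Au-v+B(u)$, the identity $\langle u,Au\rangle_{H^{-1}}=-|u|_H^2$, and the cancellation between the $2\langle u,v\rangle$ produced by $\frac{d}{dt}|u|_H^2$ and the one arising from $\langle v, Au\rangle_{H^{-1}}$. The result has the schematic form
\[
\dot\Psi = -2(1-\rho)|v|_{H^{-1}}^2 - \frac{2\rho}{\mu}|u|_H^2 + 2\langle v,B(u)\rangle_{H^{-1}} + \frac{2\rho}{\mu}\langle u,B(u)\rangle_{H^{-1}} - \frac{2\rho}{\mu}\langle u,v\rangle_{H^{-1}}.
\]
Using Hypothesis \ref{H2}, $|B(u)|_{H^{-1}} \leq (\gamma_0/\sqrt{\alpha_1})|u|_H$ and hence $|\langle u, B(u)\rangle_{H^{-1}}|\leq (\gamma_0/\alpha_1)|u|_H^2$, I would apply Young's inequality to the three cross terms with weights chosen to balance them against the two negative quadratic terms, obtaining $\dot\Psi \leq -\kappa_1(\mu)|v|_{H^{-1}}^2 - \kappa_2(\mu)|u|_H^2 \leq -c_0(\mu)\Psi$ for strictly positive constants. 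Gronwall then gives $\Psi(t)\leq \Psi(0)\,e^{-c_0(\mu)t}$, whence $|z^\mu_{z_0}(t)|_\H \leq K(\mu)|z_0|_\H\,e^{-c_0(\mu)t/2}$, from which the sup bound is immediate and the $L^2$-integrability follows by integrating the square.

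The main obstacle is the Young's-inequality book-keeping needed in the absorption step to ensure both $\kappa_1(\mu)>0$ and $\kappa_2(\mu)>0$ simultaneously. A choice $\rho$ of order $\mu$ together with the hypothesis $\mu<(\alpha_1-\gamma_0)/\gamma_0^2$ is exactly what makes this possible: the condition encodes the requirement that the effective dissipation rate $\alpha_1-\gamma_0>0$, provided by the spectral gap and the subcritical Lipschitz constant, dominates the inertial contribution of order $\mu\gamma_0^2$ coming from the wave propagation. A secondary, purely technical issue is the rigorous justification of the formal identity $\frac{d}{dt}|u|_H^2 = 2\langle u,v\rangle_H$ when $v=u_t$ is only $H^{-1}$-valued: this is handled in the standard way by working first with a Galerkin approximation or with smoother initial data, deriving the inequality $\dot\Psi\leq -c_0(\mu)\Psi$ for the approximate solutions, and passing to the limit using the Lipschitz dependence of the mild solution map $z_0\mapsto z^\mu_{z_0}$.
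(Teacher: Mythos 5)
Your Lyapunov-function strategy has a genuine quantitative gap in the absorption step: with your estimates it does not close under the lemma's hypothesis. Writing $X=|v|_{H^{-1}}$, $Y=|u|_H$, your schematic inequality becomes $\dot\Psi \le -2(1-\rho)X^2 + 2\bigl(\tfrac{\gamma_0}{\sqrt{\alpha_1}}+\tfrac{\rho}{\mu\sqrt{\alpha_1}}\bigr)XY - \tfrac{2\rho}{\mu}\bigl(1-\tfrac{\gamma_0}{\alpha_1}\bigr)Y^2$, and the desired conclusion $\dot\Psi\le -\kappa_1X^2-\kappa_2Y^2$ with $\kappa_1,\kappa_2>0$ forces negative definiteness of this quadratic form, i.e. $(\gamma_0+s)^2 < 4s(1-\mu s)(\alpha_1-\gamma_0)$ with $s=\rho/\mu$, equivalently $(1+4\mu(\alpha_1-\gamma_0))s^2-(4(\alpha_1-\gamma_0)-2\gamma_0)s+\gamma_0^2<0$. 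A positive root $s$ exists only if $\alpha_1-2\gamma_0>\mu\gamma_0^2$, which is strictly stronger than the assumed $\alpha_1-\gamma_0>\mu\gamma_0^2$ and is vacuous whenever $\gamma_0\ge\alpha_1/2$ (take $\alpha_1=1$, $\gamma_0=0.9$, $\mu=0.1$: the hypothesis holds but no $\rho$ works). The culprit is $2\langle v,B(u)\rangle_{H^{-1}}$: its coefficient $\gamma_0/\sqrt{\alpha_1}$ does not shrink with $\rho$, while the $|u|_H^2$-dissipation you generate is only of order $\rho/\mu$, so the product of the two dissipation coefficients cannot dominate $\gamma_0^2/\alpha_1$ under the stated condition. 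Hence the claim that the hypothesis "is exactly what makes this possible" is not correct for your functional; note also that you are proving more than asserted (exponential decay rather than boundedness plus square-integrability), and that extra strength is precisely what you cannot afford here.

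The paper's proof avoids this by never requiring $\dot\Psi\le -c\,\Psi$. It takes the $H^{-1}$-inner product of the equation first with $\partial\varphi/\partial t$, giving \eqref{deriv-of-varphi-product}, and then with $\varphi$, and substitutes the first (differential) inequality into the second so that the $|\partial\varphi/\partial t|^2_{H^{-1}}$ terms cancel exactly instead of being retained as dissipation that must beat the $\langle \partial\varphi/\partial t,B(\varphi)\rangle$ term; the price of this substitution is the term $\tfrac{2\mu\gamma_0^2}{\alpha_1}|\varphi|_H^2$, and the hypothesis $\mu<(\alpha_1-\gamma_0)\gamma_0^{-2}$ is exactly what keeps the resulting coefficient $\rho_\mu=2-2\gamma_0/\alpha_1-2\mu\gamma_0^2/\alpha_1$ positive. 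Integrating, all total derivatives become boundary terms (the leftover $\mu\frac{d}{dt}|\varphi(t)|^2_{H^{-1}}$ is disposed of by a short contradiction argument), which yields $\int_0^\infty|\varphi|_H^2\,dt\le c|z_0|^2_\H$; the sup bound and $\int|\partial\varphi/\partial t|^2_{H^{-1}}$ then follow from the integrated first inequality. If you wish to salvage your plan, enlarge $\Psi$ (e.g. add a multiple of $|u|^2_{H^{-1}}$, which is what the paper's combination implicitly does) and settle for $\dot\Psi\le -c|u|_H^2$, recovering the remaining bounds as the paper does; exponential decay can afterwards be deduced by iterating Lemma \ref{unperturbed-system-conv-to-zero-lem}, not directly from a Gronwall estimate.
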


\begin{proof}
If $\varphi(t) = \Pi_1 z^{\mu}_{z_0}(t)$ then
  \begin{equation}  \label{unperturbed-diff-eq}
    \mu \frac{\partial^2 \varphi}{\partial t^2}(t) +  \frac{\partial \varphi}{\partial t}(t) = A \varphi(t) + B(\varphi(t)).
  \end{equation}
  By taking the inner product of \eqref{unperturbed-diff-eq} with $\frac{\partial \varphi}{\partial t}$ in $H^{-1}$, and by using the Lipschitz continuity of $B$ in $H$, we see that
  \begin{equation} \label{deriv-of-varphi-product}
    \mu  \frac{d}{dt} \left| \frac{\partial \varphi}{\partial t}(t) \right|_{H^{-1}}^2 + 2 \left| \frac{\partial \varphi}{\partial t}(t) \right|_{H^{-1}}^2 \leq- \frac{d}{dt} \left| \varphi(t) \right|_H^2 + \left| \frac{\partial \varphi}{\partial t}(t) \right|_{H^{-1}}^2 + \frac{\gamma_0^2}{\alpha_1} \left|\varphi(t) \right|_H^2.
  \end{equation}
By integrating this expression in time, we see that
  \begin{equation} \label{deriv-of-varphi-prod-int}
    \mu \left| \frac{\partial \varphi}{\partial t}(t) \right|_{H^{-1}}^2 + \left| \varphi(t) \right|_H^2 + \int_0^t \left|\frac{\partial \varphi}{\partial s}(s) \right|_{H^{-1}}^2 ds \leq\mu |v_0|_{H^{-1}}^2 + |u_0|_{H}^2 + \frac{\gamma_0^2}{\alpha_1} \int_0^t \left| \varphi(s) \right|_H^2 ds.
  \end{equation}

Next, by taking the inner product of \eqref{unperturbed-diff-eq} with $\varphi(t)$ in $H^{-1}$, since
  \begin{equation*}
    \left< \frac{\partial^2 \varphi}{\partial t^2}(t), \varphi(t) \right>_{H^{-1}} = \frac 12 \frac{d^2}{dt^2} \left|\varphi(t)\r|^2_{H^{-1}}
    - \left|\frac{\partial \varphi}{\partial t}(t) \right|_{H^{-1}}^2,
  \end{equation*}
   we have
\[\mu \frac{d^2}{dt^2}\left|\varphi(t)\r|^2_{H^{-1}} + \frac{d}{dt} |\varphi(t)|_{H^{-1}}^2 \leq- 2|\varphi(t)|_H^2 +  \frac{2\gamma_0}{\alpha_1} |\varphi(t)|_H^2 + 2\mu \left|\frac{\partial \varphi}{\partial t}(t)\right|_{H^{-1}}^2.
  \]
  By \eqref{deriv-of-varphi-product}, this yields
  \begin{equation}
  \label{m-fine203}\begin{array}{l}
  \ds{\mu \frac{d^2}{dt^2}\left|\varphi(t)\r|^2_{H^{-1}}+ \frac{d}{dt} |\varphi(t)(t)|_{H^{-1}}^2 } \leq\\
  \vs
  \ds{- 2|\varphi(t)|_H^2 +  \frac{2\gamma_0}{\alpha_1} |\varphi(t)|_H^2 - 2 \mu^2 \frac{d}{dt} \left| \frac{\partial \varphi}{\partial t}(t) \right|_{H^{-1}}^2- 2\mu \frac{d}{dt} |\varphi(t)|_H^2 + \frac{2\gamma_0^2 \mu}{\alpha_1} |\varphi(t)|_H^2.}
  \end{array}
  \end{equation}
Now,  if $\mu< (\alpha_1 - \gamma_0)\gamma_0^{-2}$, it follows
  \begin{equation*}
  \rho_\mu := 2 - \frac{2\gamma_0}{\alpha_1} - \frac{2\mu \gamma_0^2}{\alpha_1}>0.
  \end{equation*}
Then, by integrating both sides in \eqref{m-fine203}, we see
  \begin{equation}
  \label{m-fine220}
  \begin{array}{l}
 \ds{\mu \frac{d}{dt} \left| \varphi(t) \right|_{H^{-1}}^2 + \left| \varphi(t) \right|_{H^{-1}}^2 +  \rho_\mu \int_0^t \left| \varphi(s) \right|_H^2 ds} \\
 \vs
 \ds{\leq 2 \mu \left< v_0,u_0 \right>_{H^{-1}} + |u_0|_{H^{-1}}^2 + 2\mu^2 |v_0|_{H^{-1}}^2 + 2\mu |u_0|_H^2,}
 \end{array}
  \end{equation}
  and this implies that
  \begin{equation} \label{varphi-L2-bound}
    \int_0^\infty |\varphi(t)|_H^2 ds \leq\frac{1}{\rho_\mu} \left(2 \mu \left< v_0,u_0 \right>_{H^{-1}} + |u_0|_{H^{-1}}^2 + 2\mu^2 |v_0|_{H^{-1}}^2 + 2\mu |u_0|_H^2 \right).
  \end{equation}
  Actually, if there exists $t_0>0$ and $\d>0$ such that
  \[   \int_0^{t_0} |\varphi(t)|_H^2 ds>\frac{1}{\rho_\mu} \left(2 \mu \left< v_0,u_0 \right>_{H^{-1}} + |u_0|_{H^{-1}}^2 + 2\mu^2 |v_0|_{H^{-1}}^2 + 2\mu |u_0|_H^2 \right)+\d,\]
  then, in view of \eqref{m-fine220}, for any $t>t_0$
  \[\mu \frac{d}{dt} \left| \varphi(t) \right|_{H^{-1}}^2 <-\d.\]
  This would imply that for any $t>t_0$
  \[|\varphi(t)|_{H^{-1}}^2<|\varphi(t_0)|_{H^{-1}}^2-(t-t_0)\d,\]
  which is impossible.

  We conclude the proof by combining \eqref{deriv-of-varphi-prod-int} and \eqref{varphi-L2-bound}, to see that
  \begin{equation*}
    \mu \left| \frac{\partial \varphi}{\partial t}(t) \right|_{H^{-1}}^2 + \left| \varphi(t) \right|_H^2 + \int_0^t \left|\frac{\partial \varphi}{\partial s}(s) \right|_{H^{-1}}^2 ds + \int_0^t \left| \varphi(s) \right|_H ds \leq c |z_0|_\H^2.
  \end{equation*}

\end{proof}

\begin{Lemma} \label{unperturbed-system-conv-to-zero-lem}
Assume  $\mu < (\alpha_1 - \gamma_0)\gamma_0^{-2}$, then for any $R>0$,
  \begin{equation}
    \lim_{t \to +\infty} \sup_{|z_0|_\H \leq R} \left| z^{\mu}_{z_0}(t) \right|_\H =0.
  \end{equation}
\end{Lemma}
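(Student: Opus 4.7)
My plan is to combine Lemma \ref{X-sup-bounded-lemma} with the flow (semigroup) property of the unperturbed equation \eqref{m-fine202}. Since $B_\mu$ is globally Lipschitz on $\H$ and $A_\mu$ generates a $C_0$-semigroup $S_\mu(t)$ on $\H$, the mild solution of \eqref{m-fine202} enjoys the identity $z^\mu_{z_0}(t+s)=z^\mu_{z^\mu_{z_0}(s)}(t)$ for all $s,t\geq 0$. Moreover, under the assumption $\mu<(\alpha_1-\gamma_0)\gamma_0^{-2}$, Lemma \ref{X-sup-bounded-lemma} yields the two $z_0$-uniform estimates
\[
\sup_{t\geq 0}|z^\mu_{z_0}(t)|_{\H}\leq c_1(\mu)R,\qquad \int_0^{+\infty}|z^\mu_{z_0}(s)|_{\H}^2\,ds\leq c_1(\mu)^2 R^2,
\]
valid for every $z_0$ with $|z_0|_\H\leq R$. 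These two bounds together are the only input I will use.

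The idea is a Chebyshev-plus-restart argument. Given $\eta>0$, I will set $\e:=\eta/c_1(\mu)$ and $T_0:=c_1(\mu)^2 R^2/\e^2+1$. By the $L^2$-in-time bound above, the set $\{t\in[0,T_0]:|z^\mu_{z_0}(t)|_\H\geq \e\}$ has Lebesgue measure at most $c_1(\mu)^2 R^2/\e^2<T_0$, so that for every $z_0$ with $|z_0|_\H\leq R$ there exists some $T=T(z_0)\in[0,T_0]$ with $|z^\mu_{z_0}(T)|_\H<\e$. Restarting the equation at $T$ via the flow identity and applying Lemma \ref{X-sup-bounded-lemma} a second time, now with initial condition $z^\mu_{z_0}(T)$, I get
\[
|z^\mu_{z_0}(t)|_\H=|z^\mu_{z^\mu_{z_0}(T)}(t-T)|_\H\leq c_1(\mu)\,|z^\mu_{z_0}(T)|_\H<c_1(\mu)\,\e=\eta
\]
for every $t\geq T_0\geq T$. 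Taking the supremum over $|z_0|_\H\leq R$ gives $\sup_{|z_0|_\H\leq R}|z^\mu_{z_0}(t)|_\H<\eta$ for all $t\geq T_0$, and letting $\eta\downarrow 0$ concludes the argument.

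I do not foresee any serious obstacle. All the genuine analytic work has already been absorbed into Lemma \ref{X-sup-bounded-lemma}: the sup-bound is used to propagate smallness after the restart, while the $L^2$-in-time bound is used to locate an instant of smallness inside a window whose length depends only on $\mu$, $R$ and $\eta$. The only mildly delicate point to verify is the mild-solution flow identity $z^\mu_{z_0}(t+s)=z^\mu_{z^\mu_{z_0}(s)}(t)$, which is a routine change-of-variables computation resting on the semigroup property of $S_\mu$ and the uniqueness of mild solutions. Note that this strategy does not produce any rate of decay, only uniform convergence to zero; an exponential rate would require a dedicated Lyapunov-type identity in the spirit of the one used inside the proof of Lemma \ref{X-sup-bounded-lemma}, which is however not needed for the sequel.
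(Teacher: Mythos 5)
Your argument is correct and is essentially the paper's own proof: both use the uniform $L^2((0,+\infty);\H)$ bound from Lemma \ref{X-sup-bounded-lemma} to locate, within a window of length depending only on $\mu$, $R$ and the target smallness, a time at which the trajectory is small (the paper via the elementary inequality $|z|_{L^2((0,T);\H)}\geq \sqrt{T}\min_{s\leq T}|z(s)|_\H$, you via the equivalent Chebyshev estimate), and then restart via the flow property and apply the sup bound of the same lemma to propagate smallness uniformly in $z_0$. No gap; the flow identity you flag is indeed the routine uniqueness-of-mild-solutions point the paper uses implicitly.
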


\begin{proof}
    Let us fix $R,\rho>0$ and for any $\mu>0$ let us define
\[T = \frac{(c_1(\mu))^4 R^2}{\rho^2}.\]
Let $|z_0|_\H \leq R$. Since    \begin{equation*}
     \left|z^{\mu}_{z_0} \right|_{L^2((0,T);\H)} \ge \sqrt{T} \min_{s\leq T}\,|z^{\mu}_{z_0}(s)|_\H,
    \end{equation*}
    according to \eqref{X-L2-sup-bounded}
     there must exists $t_0<T$ such that
     \[|z^{\mu}_{z_0}(t_0)|_\H \leq \frac{\rho}{c_1(\mu)}.\]
By using again  \eqref{X-L2-sup-bounded}, this implies
    \begin{equation*}
      \sup_{t \ge T} \left|z^{\mu}_{z_0}(t) \right|_{\H}=      \sup_{t \ge T}  \left|z^{\mu}_{z^\mu_{z_0}(t_0)}(t-t_0) \right|_{\H}\leq \rho.
    \end{equation*}
    Notice that $T$ is independent of our choice of $z_0$ so we can conclude that
    \begin{equation*}
      \sup_{t \ge T} \sup_{|z_0|_\H \leq R} \left|z^\mu_{z_0} (t) \right|_\H \leq \rho.
    \end{equation*}
\end{proof}

Now that we have shown that the unperturbed system is uniformly attracted to $0$ from any bounded set in $\H$, we show that if the initial velocity is large enough, $\Pi_1 z^{\mu}_{z_0}$ will leave any bounded set.

\begin{Lemma}
  For any $\mu>0$ and $t>0$, there exists $c_2(\mu,t)>0$ such that
  \begin{equation} \label{large-velocity-causes-exit-eq}
   \sup_{s \leq t} \left| \Pi_1 \Smu(s) \left(0,v_0 \right) \right|_H \geq c_2(\mu,t) \left|v_0 \right|_{H^{-1}},\ \ \ \  v_0 \in H^{-1}.
  \end{equation}
\end{Lemma}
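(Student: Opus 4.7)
The plan is to diagonalise the linear equation in the eigenbasis of $A$, which reduces the bound to a uniform-in-$k$ lower estimate on a family of explicit scalar kernels. Writing $v_0=\sum_k v_k e_k$ and $u(s):=\Pi_1 S_\mu(s)(0,v_0)=\sum_k u_k(s)\,e_k$, each Fourier coefficient solves the scalar ODE
\[\mu\ddot u_k(s)+\dot u_k(s)+\alpha_k u_k(s)=0,\qquad u_k(0)=0,\quad \dot u_k(0)=v_k,\]
so $u_k(s)=v_k\, K(s;\mu,\alpha_k)$, where $K(\,\cdot\,;\mu,\alpha)$ is the explicit Green's kernel --- equal to $\omega^{-1}e^{-s/(2\mu)}\sin(\omega s)$ with $\omega=\sqrt{4\mu\alpha-1}/(2\mu)$ in the underdamped range $4\mu\alpha>1$, and to the analogous $\sinh$ or critical expression in the other two cases.

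The key step I would carry out is the uniform estimate
\[\alpha_k\int_0^t K(s;\mu,\alpha_k)^2\,ds\ \geq\ c(\mu,t)\ >\ 0,\qquad k\geq 1.\]
I would prove this by observing that the real-variable map $\alpha\mapsto\alpha\int_0^t K(s;\mu,\alpha)^2\,ds$ is continuous and strictly positive on $[\alpha_1,+\infty)$, and admits the positive limit $\mu^2(1-e^{-t/\mu})/2$ as $\alpha\to\infty$: a direct computation using the underdamped formula, since for large $\alpha$ the damping factor $e^{-s/\mu}$ averages $\sin^2(\omega s)$ to $1/2$ and $\alpha/\omega^2\to\mu$. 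Compactness of the extended interval $[\alpha_1,+\infty]$ then forces the infimum to be strictly positive.

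Granted the mode-wise bound, the conclusion is immediate: summing against $v_k^2$ gives
\[\int_0^t|u(s)|_H^2\,ds=\sum_k v_k^2\int_0^t K(s;\mu,\alpha_k)^2\,ds\ \geq\ c(\mu,t)\,|v_0|_{H^{-1}}^2,\]
and the trivial inequality $\sup_{s\leq t}|u(s)|_H^2\geq t^{-1}\int_0^t|u(s)|_H^2\,ds$ yields the statement with $c_2(\mu,t):=\sqrt{c(\mu,t)/t}$.

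The main point to watch is precisely the uniform positivity step above, where the damped-wave structure is essential: the factor $e^{-s/(2\mu)}$ is exactly what prevents the high-frequency oscillations from cancelling $\int_0^t K^2\,ds$ to leading order in $\alpha$. As an aside, a naive double-time-integration of $\mu\ddot u+\dot u-Au=0$ followed by taking $H^{-1}$-norms only produces a bound on $|v_0|_{H^{-2}}$, since $Au$ lies in $H^{-2}$ when $u\in H$; working mode-by-mode is what captures the correct $H^{-1}$-regularity of $v_0$.
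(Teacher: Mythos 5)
Your argument is correct, but it follows a genuinely different route from the paper. You diagonalize in the eigenbasis of $A$, reduce the claim to the uniform lower bound $\alpha\int_0^t K(s;\mu,\alpha)^2\,ds\geq c(\mu,t)>0$ for the explicit scalar Green's kernel, and obtain uniformity in $\alpha$ from continuity on $[\alpha_1,\infty)$ together with the strictly positive limit $\mu^2(1-e^{-t/\mu})/2$ as $\alpha\to\infty$; this is sound (the limit computation is right, only finitely many modes can be over- or critically damped and the kernel depends continuously on $\alpha$ across the threshold, and the mode-wise bound sums against $v_k^2/\alpha_k$ to give exactly the $H^{-1}$ norm). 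The paper instead works with energy functionals in $H^{-1}$ without ever using the spectral representation of the semigroup: taking the $H^{-1}$ inner product of $\mu\varphi''+\varphi'=A\varphi$ with $\varphi'$ gives, by variation of constants, the bound $|\varphi'(t)|_{H^{-1}}^2\leq e^{-2t/\mu}|v_0|_{H^{-1}}^2+\mu^{-1}\sup_{s\leq t}|\varphi(s)|_H^2$, while differentiating $|\varphi+\mu\varphi'|_{H^{-1}}^2$ yields the exact identity $\mu^2|v_0|_{H^{-1}}^2=|\varphi(t)+\mu\varphi'(t)|_{H^{-1}}^2+2\int_0^t|\varphi(s)|_H^2\,ds+\mu|\varphi(t)|_H^2$; a Young inequality with parameter $a<e^{2t/\mu}-1$ then closes the estimate. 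The paper's method gives a constant that is explicit in $\mu,t,\alpha_1$, needs no case analysis over damping regimes, and does not use that the damping commutes with $A$, so it is more robust; your spectral argument is more elementary and makes transparent why the bound does not degenerate at high frequencies (the damping factor $e^{-s/\mu}$ prevents cancellation of $\int K^2$), at the price of a constant obtained by a compactness argument rather than in closed form. One small point to make explicit if you write this up: with the paper's sign conventions the mode initial condition is $\dot u_k(0)=\pm v_k$, which only changes the sign of the kernel and not the estimate, and the interchange of sum and time integral is justified by monotone convergence.
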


\begin{proof}
  Let $\varphi(t) = \Pi_1 \Smu(t) (0,v_0)$.  Then
  \begin{equation*}
    \mu \frac{\partial^2 \varphi}{\partial t^2}(t) + \frac{\partial \varphi}{\partial t}(t) = A \varphi(t),\ \ \ \ \varphi(0) = 0,\ \ \frac{\partial \varphi}{\partial t}(0)=v_0.
  \end{equation*}
By taking the inner product of this equation with $\frac{\partial \varphi}{\partial t}(t)$ in $H^{-1}$, we see that
  \begin{equation*}
    \mu \frac{d}{dt} \left| \frac{\partial \varphi}{\partial t}(t) \right|_{H^{-1}}^2 + 2 \left| \frac{\partial \varphi}{\partial t}(t) \right|_{H^{-1}}^2
    = -\frac{d}{dt} \left| \varphi(t) \right|_H^2.
  \end{equation*}
Therefore, by standard calculations,
  \begin{equation*}
\begin{array}{l}
\ds{    \left| \frac{\partial \varphi}{\partial t}(t) \right|_{H^{-1}}^2 = e^{-\frac{2t}{\mu}} |v|_{H^{-1}}^2 - \frac{1}{\mu} \int_0^t e^{-\frac{2(t-s)}{\mu}} \frac{d}{ds} \left|\varphi(s) \right|_H^2ds}\\
\vs
\ds{ = e^{-\frac{2t}{\mu}} |v_0|_{H^{-1}}^2 - \frac{1}{\mu}  |\varphi(t)|_H^2 + \frac{2}{\mu^2} \int_0^t e^{-\frac{2(t-s)}{\mu}} \left|\varphi(s) \right|_H^2 ds,}
\end{array}
  \end{equation*}
so that
  \begin{equation} \label{deriv-of-varphi-upper-bound}
    \left| \frac{\partial \varphi}{\partial t}(t) \right|_{H^{-1}}^2 \leq e^{-\frac{2t}{\mu}} |v_0|_{H^{-1}}^2 + \frac{1}{\mu} \sup_{s \leq t} |\varphi(s)|_H^2.
  \end{equation}

  Next, since
\begin{equation} \label{linear-eq-energy-method}  \begin{array}{l}
\ds{\frac{d}{dt} \left| \varphi(t) + \mu\frac{\partial \varphi}{\partial t}(t) \right|_{H^{-1}}^2
= 2\left<\varphi(t) + \mu \frac{\partial \varphi}{\partial t}(t), \frac{\partial \varphi}{\partial t}(t) + \mu \frac{\partial^2 \varphi}{\partial t^2}(t) \right>_{H^{-1}}} \\
\vs
\ds{= 2\left< \varphi(t) + \mu \frac{\partial \varphi}{\partial t}(t), A \varphi(t) \right>_{H^{-1}}
= - 2|\varphi(t)|_H^2 - \mu \frac{d}{dt} \left| \varphi(t) \right|_{H},}
  \end{array}\end{equation}
if we integrate in time we get
 \begin{equation*}
\mu^2 |v_0|_{H^{-1}}^2=    \left| \varphi(t) + \mu \frac{\partial \varphi}{\partial t}(t) \right|_{H^{-1}}^2 + 2 \int_0^t |\varphi(s)|_H^2 ds + \mu |\varphi(t)|_H^2.
  \end{equation*}
  For any $a>0$ to be chosen later, we have
  \begin{equation*}
    \left| \varphi(t) + \mu \frac{\partial \varphi}{\partial t}(t) \right|_{H^{-1}}^2 \leq\left( 1+ a^{-1}\right) \frac{1}{\alpha_1} \left| \varphi(t) \right|_H^2 + \mu^2 (1+a) \left|\frac{\partial \varphi}{\partial t}(t)\right|_{H^{-1}}^2
  \end{equation*}
and therefore,
  \begin{equation*}
    \mu^2 |v_0|_{H^{-1}}^2 \leq\left( \mu + 2t + \left(1 + a^{-1} \right)\frac{1}{\alpha_1} \right) \sup_{s \leq t} |\varphi(s)|_H^2 + \mu^2 (1+a) \left| \frac{\partial \varphi}{\partial t}(t) \right|_{H^{-1}}^2
  \end{equation*}
Thanks to \eqref{deriv-of-varphi-upper-bound}, this yields
\[\mu^2 \left( 1 - (1+a) e^{-\frac{2t}{\mu}} \right) |v_0|_{H^{-1}}^2 \leq\left( \mu + 2t + \left(1 + a^{-1}  \right)\frac{1}{\alpha_k} + (1+a)\mu  \right) \sup_{s \leq t} \left| \varphi(s) \right|_H^2,
  \]
 and our conclusion follows with if we pick $a< e^{\frac{2t}{\mu}} - 1$.

\end{proof}

As a consequence of the previous lemma, we can conclude that the following lower bound estimate holds for the solution of \eqref{m-fine202}.

\begin{Lemma} \label{large-init-veloc-causes-exit-lem}
  For any $\mu>0$ and $t> 0$ there exists $c(\mu,t)>0$ such that
  \begin{equation}
    \sup_{s \leq t} \left|\Pi_1 z^{\mu}_{z_0}(s) \right|_H \ge c(\mu,t) |\Pi_2 z_0|_{H^{-1}},\ \ \ \ z_0 \in\,\H.
  \end{equation}
\end{Lemma}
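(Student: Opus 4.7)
The plan is to reduce the nonlinear estimate to the linear estimate established in the previous lemma by means of the variation-of-constants formula, isolating the contribution from the initial velocity and bounding the remaining terms in terms of $\sup_{s\leq t}|\Pi_1 z^\mu_{z_0}(s)|_H$ itself.

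First, using the mild formulation of \eqref{m-fine202}, I would write
\[
S_\mu(s)(0,v_0) = z^\mu_{z_0}(s) - S_\mu(s)(u_0,0) - \int_0^s S_\mu(s-r)B_\mu(z^\mu_{z_0}(r))\,dr,
\]
and then project onto the first component. By the previous lemma,
\[
c_2(\mu,t)|v_0|_{H^{-1}}\leq \sup_{s\leq t}|\Pi_1 S_\mu(s)(0,v_0)|_H.
\]
So it is enough to bound the right-hand side of the first display, in absolute value in $H$, uniformly for $s\in[0,t]$, by a multiple of $\sup_{s\leq t}|\Pi_1 z^\mu_{z_0}(s)|_H$.

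Second, I would control each of the three pieces. The term $|\Pi_1 z^\mu_{z_0}(s)|_H$ contributes $\sup_{s\leq t}|\Pi_1 z^\mu_{z_0}(s)|_H$ directly. For the term $\Pi_1 S_\mu(s)(u_0,0)$, I would use \eqref{m6} to estimate $|\Pi_1 S_\mu(s)(u_0,0)|_H\leq M_\mu |u_0|_H$, and then exploit the fact that $u_0=\Pi_1 z^\mu_{z_0}(0)$ to write $|u_0|_H\leq \sup_{s\leq t}|\Pi_1 z^\mu_{z_0}(s)|_H$. For the convolution term, since $B_\mu(u,v)=\tfrac1\mu(0,B(u))$, Hypothesis \ref{H2} together with $B(0)=0$ and the continuous embedding $H\hookrightarrow H^{-1}$ gives
\[
|B_\mu(z^\mu_{z_0}(r))|_{\mathcal{H}} = \tfrac{1}{\mu}|B(\Pi_1 z^\mu_{z_0}(r))|_{H^{-1}}\leq \tfrac{\gamma_0}{\mu\sqrt{\alpha_1}}|\Pi_1 z^\mu_{z_0}(r)|_H,
\]
so that, using \eqref{m6} again,
\[
\sup_{s\leq t}\Bigl|\Pi_1\int_0^s S_\mu(s-r)B_\mu(z^\mu_{z_0}(r))\,dr\Bigr|_H\leq \frac{M_\mu\gamma_0\, t}{\mu\sqrt{\alpha_1}}\,\sup_{s\leq t}|\Pi_1 z^\mu_{z_0}(s)|_H.
\]

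Combining these three bounds yields
\[
c_2(\mu,t)|v_0|_{H^{-1}}\leq \Bigl(1+M_\mu+\tfrac{M_\mu\gamma_0 t}{\mu\sqrt{\alpha_1}}\Bigr)\,\sup_{s\leq t}|\Pi_1 z^\mu_{z_0}(s)|_H,
\]
which is exactly the desired inequality with $c(\mu,t):= c_2(\mu,t)\bigl(1+M_\mu+\tfrac{M_\mu\gamma_0 t}{\mu\sqrt{\alpha_1}}\bigr)^{-1}$. No main obstacle is expected here: the only point requiring a touch of care is to verify that the $\Pi_1$ of the integral is well-defined in $H$ (which follows from $B$ mapping $H$ into $H$ and the $H^{-1}$-action of $S_\mu$ onto $(0,B(u))$ yielding an $H$-valued first component via the integral form), and the fact that the $u_0$-term is absorbed by taking the supremum over $s\leq t$, rather than requiring an independent estimate.
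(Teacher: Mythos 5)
Your proposal is correct and follows essentially the same route as the paper: both rearrange the mild formulation to isolate $\Pi_1 S_\mu(s)(0,v_0)$, bound the solution term, the $(u_0,0)$ term and the convolution with $B_\mu$ by $\sup_{s\leq t}|\Pi_1 z^\mu_{z_0}(s)|_H$ via \eqref{m6} and Hypothesis \ref{H2}, and then invoke \eqref{large-velocity-causes-exit-eq}. The only (immaterial) difference is that you bound the semigroup convolution in time by a factor $t$ while the paper integrates the exponential to get $\omega_\mu^{-1}$.
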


\begin{proof}
Let $z_0 = (u_0,v_0)$. Since  \begin{equation*}
    \Pi_1 z^{\mu}_{z_0}(t) = \Pi_1 \Smu(t) (u_0,0) + \Pi_1 \Smu(t) (0,v_0) + \Pi_1 \int_0^t \Smu(t-s) B_\mu(z^{\mu}_{z_0}(s)) ds,
  \end{equation*}
from the Hypothesis \ref{H2} and \eqref{m6}, for any $s>0$
  \begin{equation*}
    |\Pi_1 \Smu(s)(0,v_0)|_H \leq \left(2M_\mu + \frac{\gamma_0 M_\mu}{\omega_\mu \mu}  \right) \sup_{r \leq s} \left|\Pi_1 z^{\mu}_{z_0}(r) \right|_H.
  \end{equation*}
  According to \eqref{large-velocity-causes-exit-eq}, this implies that for any $t>0$,
  \begin{equation*}
c_2(\mu,t) |v_0|_{H^{-1}} \leq \sup_{s \leq t} |\Pi_1 \Smu(t)(0,v_0)|_H \leq \left(2 M_\mu +  \frac{\gamma_0 M_\mu}{\omega_\mu \mu} \right) \sup_{s \leq t} \left|\Pi_1 z^{\mu}_{z_0}(s) \right|_H.
  \end{equation*}
  Therefore, the result follows with
  \[c(\mu,t) = c_1(\mu,t) \left( 2 M_\mu +  \frac{\gamma_0 M_\mu}{\omega_\mu \mu}\right)^{-1}.\]
\end{proof}

\section{The skeleton equation}

For any  $\mu>0$ and $s< t$ and for any $\psi \in\,L^2((s,t);H)$ we define
\[L^\mu_{s,t} \psi = \int_{s}^{t} \Smu(t -r) \Qmu \psi(r) dr.\]
Clearly $L^\mu_{s,t}$ is a continuous bounded linear operator from $L^2([s,t];H)$ into $\mathcal{H}$. If we define the pseudo-inverse of $L^\mu_{s,t}$ as
\[(L^\mu_{s,t})^{-1}(x)=\arg\!\min\,\le\{|(L^\mu_{s,t})^{-1}(\{x\})|_{L^2([s,t];H)}\r\},\ \ \ \ x \in\,\Im(L^\mu_{s,t}),\]
  we have the following bounds.

\begin{Theorem}  \label{L-inverse-thm}
For any $\mu>0$ and $s<t$, it holds
\begin{equation}
\label{m11}
\left | \left(L^\mu_{s,t} \right)^{-1} z \right |_{L^2((s,t);H)} =
    \sqrt{2}\left |   (C_\mu - \Smu(t-s) C_\mu S^{\star}_\mu(t-s))^{-1/2} z \right |_{\mathcal{H}},\ \ \ \ z \in\,\Im(L^\mu_{s,t}),
    \end{equation}
      where
  \begin{equation} \label{D_mu-def}
    C_\mu (u,v) = \left(Q^2(-A)^{-1} u, \frac{1}{\mu} Q^2 (-A)^{-1} v \right),\ \ \ \ (u,v) \in\,\mathcal{H}.
  \end{equation}
    Moreover,  for every $\mu>0$ there exists $T_\mu>0$ such that
  \begin{equation}
  \label{m12}
    \Im(L^\mu_{s,t}) = \Im((C_\mu)^{1/2})=\H_{1+2\beta},\ \ \ \ t-s \geq T_\mu,
  \end{equation}
  and
  \begin{equation} \label{L-mu-inverse-bound}
    \left | (L^\mu_{s,t})^{-1} z  \right |_{L^2((s,t);H)} \leq c(\mu, t-s) \left |z \right |_{\H_{1+2\beta}},\ \ \ z \in\,\H_{1+2\beta},
  \end{equation}
  for some constant $c(\mu,r)>0$, with $r\geq T_\mu$.

\end{Theorem}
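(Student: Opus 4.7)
The plan hinges on the classical fact that for any bounded operator $L\colon X\to Y$ between Hilbert spaces, $\Im(L)=\Im((LL^*)^{1/2})$ and the Moore--Penrose pseudo-inverse satisfies $|L^{-1}z|_X^2=\langle(LL^*)^{-1}z,z\rangle_Y$ for every $z\in\Im(L)$. Applied to $L=L^\mu_{s,t}$, this reduces the entire theorem to establishing the operator identity
\[L^\mu_{s,t}(L^\mu_{s,t})^*=\tfrac{1}{2}\bigl(C_\mu-S_\mu(t-s)C_\mu S_\mu^*(t-s)\bigr),\]
from which \eqref{m11} is immediate, while \eqref{m12} and \eqref{L-mu-inverse-bound} will follow once the right-hand side is controlled spectrally. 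Note that \eqref{m11} then holds for every $t-s>0$ and every $z$ in the image, while the identification of that image with $\mathcal{H}_{1+2\beta}$ is only expected when $t-s$ is large enough.

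To prove the displayed identity, I would first compute the adjoint: $((L^\mu_{s,t})^*z)(r)=Q_\mu^*S_\mu^*(t-r)z$ for $r\in(s,t)$, so that after the substitution $\rho=t-r$,
\[L^\mu_{s,t}(L^\mu_{s,t})^*z=\int_0^{t-s}S_\mu(\rho)Q_\mu Q_\mu^*S_\mu^*(\rho)z\,d\rho.\]
The key algebraic input is then the Lyapunov relation $A_\mu C_\mu+C_\mu A_\mu^*=-2\,Q_\mu Q_\mu^*$ on the domain $\mathcal{H}_2$. Since $A$ and $Q$ are simultaneously diagonalised by $\{e_k\}$, the operators $A_\mu$, $C_\mu$ and $Q_\mu Q_\mu^*$ split into $2\times 2$ blocks acting on each pair $(u_k,v_k)\in\mathbb{R}^2$ endowed with the weighted inner product $\mathrm{diag}(1,\alpha_k^{-1})$, and the Lyapunov identity reduces to a direct matrix computation modewise (exploiting $Q^2 e_k=\lambda_k^2 e_k$ and $Ae_k=-\alpha_k e_k$). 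Once it is established, differentiating $T\mapsto S_\mu(T)C_\mu S_\mu^*(T)$ and integrating from $0$ to $t-s$ yield the desired identity.

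The range identification \eqref{m12} and the quantitative bound \eqref{L-mu-inverse-bound} come from the diagonal structure of $C_\mu$. By Hypothesis \ref{H1}, $\lambda_k\sim\alpha_k^{-\beta}$, so $C_\mu$ has eigenvalues $\lambda_k^2/\alpha_k$ and $\lambda_k^2/(\mu\alpha_k)$, both of order $\alpha_k^{-2\beta-1}$, which forces $\Im(C_\mu^{1/2})=H^{1+2\beta}\times H^{2\beta}=\mathcal{H}_{1+2\beta}$ together with a two-sided equivalence between $|C_\mu^{-1/2}z|_{\mathcal{H}}$ and $|z|_{\mathcal{H}_{1+2\beta}}$ (with constants depending on $\mu$). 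Using the exponential decay \eqref{m17}, I would choose $T_\mu$ large enough that, in the sense of quadratic forms on $\mathcal{H}$, $S_\mu(T)C_\mu S_\mu^*(T)\leq\tfrac{1}{2}\,C_\mu$ for all $T\geq T_\mu$; then $C_\mu-S_\mu(T)C_\mu S_\mu^*(T)$ is comparable to $C_\mu$, which yields \eqref{m12} and the bound \eqref{L-mu-inverse-bound} with $c(\mu,t-s)$ inherited from the norm equivalence.

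The main obstacle is this last uniform modewise comparison, because the $2\times 2$ block $S_\mu^{(k)}(T)$ behaves qualitatively differently in the two regimes $4\mu\alpha_k<1$ (overdamped: real eigenvalues of orders $-\alpha_k$ and $-1/\mu$) and $4\mu\alpha_k>1$ (underdamped: complex eigenvalues with real part $-1/(2\mu)$). A case split based on the size of $\mu\alpha_k$, together with explicit formulas for $S_\mu^{(k)}(T)$ in each regime, is therefore required in order to estimate $S_\mu^{(k)}(T)C_\mu^{(k)}S_\mu^{*(k)}(T)$ uniformly in $k$ and produce a single $T_\mu$ that works across all modes, closing the proof.
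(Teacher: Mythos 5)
Your overall architecture coincides with the paper's for the first half of the statement: the paper also identifies $L^\mu_{s,t}(L^\mu_{s,t})^\star=\tfrac12\bigl(C_\mu-\Smu(t-s)C_\mu S^\star_\mu(t-s)\bigr)$ (it does so by the modewise Fourier computation \eqref{L-mu-adj-norm-squared}, which is exactly your Lyapunov relation integrated on each $2\times2$ block) and then invokes the standard pseudo-inverse identity to get \eqref{m11}, and it identifies $\Im(C_\mu^{1/2})=\H_{1+2\beta}$ with the two-sided equivalence \eqref{m45} exactly as you do.

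The difference — and the one point where your write-up is not actually a proof — is the step you yourself flag as the ``main obstacle'': producing $T_\mu$ such that $\Smu(T)C_\mu S^\star_\mu(T)$ is dominated by a fixed fraction of $C_\mu$ uniformly in the modes. You announce an overdamped/underdamped case split with explicit formulas for the blocks $S^{(k)}_\mu(T)$ but do not carry it out, so as written the argument for \eqref{m12} and \eqref{L-mu-inverse-bound} is incomplete. Moreover, that computation is unnecessary: the paper observes that $C_1$ acts as the scalar $\lambda_k^2\alpha_k^{-1}$ on each Fourier block, hence $C_1^{1/2}$ commutes with $S^\star_\mu(t)$ (this is \eqref{m15}), and since $(1\wedge\sqrt\mu)\,|C_\mu^{1/2}z|_\H\le|C_1^{1/2}z|_\H\le(1+\sqrt\mu)\,|C_\mu^{1/2}z|_\H$, the already-known uniform decay \eqref{m6} gives at once $|C_\mu^{1/2}S^\star_\mu(t)z|_\H\le c_\mu M_\mu e^{-\omega_\mu t}|C_\mu^{1/2}z|_\H$ for all $z$, with no dependence on the regime of $\mu\alpha_k$. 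Choosing $T_\mu$ with $c_\mu^2M_\mu^2e^{-2\omega_\mu T_\mu}<1$ then yields the quadratic-form comparison, and \eqref{m12}, \eqref{L-mu-inverse-bound} follow from \eqref{m45}. In effect, the uniform-in-$k$ spectral information you propose to rederive by hand is already packaged in \eqref{m6} (quoted from \cite[Proposition 2.4]{smolu2}); the commutation trick is what lets it be applied to the weighted operator $C_\mu$ without any block-by-block analysis. Your planned case split would presumably succeed if executed carefully, but you should either carry it out or replace it by this shorter argument.
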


\begin{proof}
It is immediate to check that for any $z \in\,\mathcal{H}$
  \begin{align} \label{L-mu-adj-initial-calculation}
    &\left |(L^\mu_{s,t})^\star z \right |_{L^2((s,t);H)}^2
    = \frac{1}{\mu^2}\int_{0}^{t-s} \left | Q(-A)^{-1} \Pi_2 S_\mu^\star(r) z \right |_H^2 dr.
  \end{align}
Now, if we expand $S^\star_\mu(t)(u,v)$ in Fourier series, we have (see \cite[Proposition 2.3]{smolu2})
\[    S^\star_\mu(t) (u,v) = \sum_{k=1}^\infty \left(\hat{f}_k^\mu(t) e_k, \hat{g}_k^\mu(t) e_k \right),\]
where $\hat{f}_k^\mu$ and $\hat{g}_k^\mu$ solve the system
  \begin{equation}
\le\{\begin{array}{ll}
\ds{      \mu(\hat{f}_k^\mu)'(t) = -\hat{g}_k^\mu(t),}  &  \ds{\hat{f}_k^\mu(0) = u_k,}\\
&  \vs
\ds{ \mu (\hat{g}_k^\mu)'(t) = \mu \alpha_k \hat{f}_k^\mu(t) - \hat{g}_k^\mu(t),}   &   \ds{\hat{g}_k^\mu(0) = v_k.}
\end{array}\r.
  \end{equation}
In particular,
  \begin{align}
    |\hat{g}_k^\mu(t)|^2 = - \frac{\mu^2 \alpha_k}{2} \frac{d}{dt} |\hat{f}_k^\mu(t)|^2 - \frac{\mu}{2} \frac{d}{dt} |\hat{g}_k^\mu(t)|^2
  \end{align}
Due to \eqref{L-mu-adj-initial-calculation}, we get
\begin{equation}
 \label{L-mu-adj-norm-squared}
 \begin{array}{l}
 \ds{\left |(L^\mu_{s,t})^\star z \right |_{L^2([s,t];H)}^2=\frac 12 \sum_{k=1}^\infty \int_{0}^{t-s} \left( - \frac{\lambda_k^2}{\alpha_k} \frac{d}{dr} |\hat{f}_k^\mu(r)|^2 - \frac{\lambda_k^2}{ \mu \alpha_k^2} \frac{d}{dr} |\hat{g}_k^\mu(r)|^2 \right) dr}\\
 \vs
 \ds{= \frac 12\sum_{k=1}^\infty \left( -\frac{\lambda_k^2}{\alpha_k} |\hat{f}_k^\mu(t-s)|^2 - \frac{\lambda_k^2}{\alpha_k^2 \mu} |\hat{g}_k^\mu(t-s)|^2 + \frac{\lambda_k^2}{\alpha_k} |u_k|^2 + \frac{\lambda_k^2}{\alpha_k^2 \mu} |v_k|^2 \right)} \\
 \vs
\ds{=\frac{1}{2} \left( | C_\mu^{1/2} z |_\H^2
                         -| C_\mu^{1/2} S_\mu^\star(t-s)z  |_\H^2 \right)= \frac{1}{2} \left<(C_\mu - \Smu(t-s) C_\mu S_\mu^\star(t-s)) z,z \right>_\H.}
  \end{array}
  \end{equation}
  This implies that
  \[\Im(L^\mu_{s,t})=\Im((C_\mu - \Smu(t-s) C_\mu S_\mu^\star(t-s))^{1/2},\]
  and \eqref{m11} follows.

  Next, in order to prove \eqref{m12}, we notice that
  \begin{equation}
  \label{m15}
   C_1^{1/2} S_\mu^\star(t)=S_\mu^\star(t) C_1^{1/2},\ \ \ \ t\geq 0, \end{equation}
  and that
  \[ \left( 1 \wedge \sqrt{\mu} \right) \le| C_\mu^{1/2} z \r|_\H \leq \le| C_1^{1/2} z \r|_\H \leq \left( 1 + \sqrt{\mu} \right) \left|C_\mu^{1/2} z \right|_\H
  \]
so that, due to \eqref{m6}, we have
  \[| C_\mu^{1/2} S_\mu^\star(t)z |_\H \leq c_\mu M_\mu e^{ -\omega_\mu t} | C_\mu^{1/2} z |_\H,\ \ \ \ \ t\geq 0.\]
According to \eqref{L-mu-adj-norm-squared}, this implies
\[    \left |(L^\mu_{s,t})^* z \right |_\H^2 = \frac 12 |C_\mu^{1/2} z |_\H^2 - \frac 12 | C_\mu^{1/2} \Smu(t-s) z |_\H\ge \frac 12 (1 -c_\mu^2\, M_\mu^2 e^{-2 \omega_\mu (t-s)}) |C_\mu^{1/2} z |_\H^2.\]
Therefore, if we pick $T_\mu>0$  large enough so that $c_\mu^2\,M_\mu e^{-\omega_\mu T_\mu}<1$, we obtain that
\[   \Im(L^\mu_{s,t}) = \Im((C_\mu)^{1/2}),\]
and
\[  \left | (L^\mu_{s,t})^{-1} z  \right |_{L^2((s,t);H)} \leq \sqrt{2}\,\le(1 - c_\mu^2\, M_\mu^2 e^{-2 \omega_\mu r}\r)^{-1/2}\,|(C_\mu)^{-1/2}z|_{\H}.\]
Now,
as for any $\mu>0$ we have
$\Im((C_\mu)^{1/2})=\H_{1+2\beta},$
and
\begin{equation}
\label{m45}
(1\wedge \mu)\,|z|_{\H_{1+2\beta}}\leq |(C_\mu)^{-1/2}z|_{\H}\leq (1+\mu)\,|z|_{\H_{1+2\beta}},
\end{equation}
\eqref{m12} and \eqref{L-mu-inverse-bound} follow immediately, with
\[c(\mu,r) = (1+\mu)\,\sqrt{2}\,\le(1 -c_\mu^2\,  M_\mu^2 e^{-2 \omega_\mu r}\r)^{-1/2}.\]

\end{proof}

\begin{Remark}
{\em \begin{enumerate}
\item In fact,  it is possible to show that $\Im(L^\mu_{s,t}) = \Im((C_\mu)^{1/2})$, for all $t-s>0$, by using the explicit representation of $S_\mu^\star(t)$.
\item From \eqref{m6} and \eqref{m11}, it easily follows that
\begin{equation}
\label{m41}
|(L^\mu_{-\infty,t})^{-1}z|_{L^2((-\infty,t);H)}=\sqrt{2}\,|C_\mu^{-1/2}z|_\H,\ \ \ z \in\,\text{Im}(L^\mu_{-\infty,t}).
\end{equation}
\end{enumerate}}
\end{Remark}

\begin{Lemma}  \label{wave-high-reg-conv-to-zero-lem}
Let us fix $\psi \in L^2((-\infty,0);H^{2\a})$, with $\a \in\,[0,1/2]$, and $\mu>0$ and let  $z^\mu_\psi \in\,C((-\infty,0);\H)$ solve the equation
\begin{equation}
\label{m16}
z^\mu_\psi(t)=\int_{-\infty}^t S_\mu(t-s) B_\mu(z^\mu_\psi(s))\,ds+\int_{-\infty}^tS_\mu(t-s) Q_\mu\psi(s)\,ds,\ \ \ t\in\,\reals.
\end{equation}
Then, if
\begin{equation}
\label{m31}\lim_{t\to-\infty}|z^\mu_\psi(t)|_\H=0,
\end{equation}
 we have $z^\mu_\psi \in\,C((-\infty,0);\H_{1+2(\a+\beta)})$ and
   \begin{equation}
   \label{m30}
      \lim_{t \to -\infty} \left |z^\mu_\psi(t) \right|_{\H_{1+2(\a+\beta)}} =0.
    \end{equation}
\end{Lemma}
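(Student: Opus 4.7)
The plan is a bootstrap on Sobolev regularity: starting from the hypothesis $|z^\mu_\psi(t)|_{\mathcal{H}}\to 0$, I would lift $z^\mu_\psi$ one step at a time via the mild representation \eqref{m16} until reaching $\mathcal{H}_{1+\sigma}$, where $\sigma:=2(\alpha+\beta)$. The condition $\alpha\leq 1/2$ ensures $\sigma\leq 1+2\beta$, which will keep the entire bootstrap within the range where Hypothesis~\ref{H2} applies.

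First I would handle the noise convolution $I^N(t):=\int_{-\infty}^{t}S_\mu(t-s)Q_\mu\psi(s)\,ds$. By the norm equivalence recorded after Hypothesis~\ref{H1}, $Q$ maps $H^{2\alpha}$ boundedly into $H^\sigma$, so $Q_\mu\psi(s)=(0,Q\psi(s)/\mu)$ lies in $\mathcal{H}_{1+\sigma}$ with norm controlled by $c\mu^{-1}|\psi(s)|_{H^{2\alpha}}$. Combining this with the exponential decay \eqref{m17} of $S_\mu$ on $\mathcal{H}_{1+\sigma}$ and Cauchy--Schwarz gives $|I^N(t)|_{\mathcal{H}_{1+\sigma}}\leq c(\mu)\,\big|\mathbf{1}_{(-\infty,t)}\psi\big|_{L^2((-\infty,0);H^{2\alpha})}$, which tends to $0$ as $t\to-\infty$ by dominated convergence.

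For the drift convolution $I^D(t):=\int_{-\infty}^{t}S_\mu(t-s)B_\mu(z^\mu_\psi(s))\,ds$, I would prove a one-step regularity-lifting lemma: if $z^\mu_\psi\in C((-\infty,0);\mathcal{H}_\gamma)$ with $|z^\mu_\psi(t)|_{\mathcal{H}_\gamma}\to 0$ for some $\gamma\geq 0$, then $I^D\in C((-\infty,0);\mathcal{H}_{1+\min(\gamma,1+2\beta)})$ with vanishing norm at $-\infty$. Indeed, $B_\mu(z^\mu_\psi(s))=(0,B(\Pi_1 z^\mu_\psi(s))/\mu)$ lies in $\mathcal{H}_{1+\min(\gamma,1+2\beta)}$, using the embedding $H^\gamma\hookrightarrow H^{\min(\gamma,1+2\beta)}$ and the Lipschitz bound from Hypothesis~\ref{H2}, with norm bounded by $C_\mu |z^\mu_\psi(s)|_{\mathcal{H}_\gamma}$. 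The vanishing at $-\infty$ then follows from the splitting $\int_{-\infty}^{T_0}+\int_{T_0}^{t}$: the second piece is at most $C_\mu\omega_\mu^{-1}\sup_{s\in[T_0,t]}|z^\mu_\psi(s)|_{\mathcal{H}_\gamma}$, which is small if $T_0$ is chosen low enough, while the first piece is controlled by the total $\mathcal{H}_\gamma$-supremum times $e^{-\omega_\mu(t-T_0)}\to 0$.

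Combining the two estimates via \eqref{m16}, and using $\sigma\leq 1+2\beta$ to simplify the nested minima, yields $z^\mu_\psi\in C((-\infty,0);\mathcal{H}_{\gamma'})$ with $\gamma'=\min(\gamma+1,1+\sigma)$ and $|z^\mu_\psi(t)|_{\mathcal{H}_{\gamma'}}\to 0$. Taking $\gamma_0=0$ (base case given by \eqref{m31}) and iterating $\gamma_{n+1}=\min(\gamma_n+1,1+\sigma)$, the sequence reaches $\gamma_n=1+\sigma=1+2(\alpha+\beta)$ in at most $\lceil 1+\sigma\rceil$ steps, which proves both the regularity claim and \eqref{m30}. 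The main technical point is the propagation of the vanishing at $-\infty$ through each iteration: this cannot be handled by naive uniform bounds and forces the $\varepsilon$-splitting argument above, combining the exponential decay of $S_\mu$ with the decay of $|z^\mu_\psi(s)|_{\mathcal{H}_\gamma}$ at $-\infty$ inherited from the previous bootstrap step.
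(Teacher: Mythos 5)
Your overall strategy is exactly the paper's: use the mild formulation \eqref{m16}, note that the stochastic-convolution-type term with $Q_\mu\psi$ lands in $\H_{1+2(\a+\beta)}$ with norm controlled by $|\psi|_{L^2((-\infty,t);H^{2\a})}$ (hence vanishing as $t\to-\infty$), and bootstrap the drift term one unit of regularity at a time using the Lipschitz continuity of $B$ in $H^\d$ for $\d\leq 1+2\beta$ and the exponential decay \eqref{m17} of $S_\mu$, the ceiling $\H_{1+2(\a+\beta)}$ coming from the control term. That part is fine and matches the paper.

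The one place you go astray is the ``propagation of the vanishing at $-\infty$'' for the drift convolution, which you present as the delicate point requiring an $\varepsilon$-splitting ``that cannot be handled by naive uniform bounds.'' In fact the naive bound is all that is needed, and it is what the paper uses: since the integral $\int_{-\infty}^t S_\mu(t-s)B_\mu(z^\mu_\psi(s))\,ds$ only involves times $s\leq t$, one has directly
\[
\Bigl|\int_{-\infty}^t S_\mu(t-s)B_\mu(z^\mu_\psi(s))\,ds\Bigr|_{\H_{\gamma+1}}
\;\leq\;\frac{M_\mu\,\gamma_{\gamma}}{\mu\,\omega_\mu}\,\sup_{s\leq t}\bigl|\Pi_1 z^\mu_\psi(s)\bigr|_{H^{\gamma}},
\]
and the right-hand side tends to $0$ as $t\to-\infty$ by the decay inherited from the previous bootstrap step. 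Moreover, your splitting as written does not parse in this limit: you fix $T_0<t$ and claim the piece over $(-\infty,T_0]$ is controlled by $e^{-\omega_\mu(t-T_0)}\to 0$, but as $t\to-\infty$ one cannot keep $T_0<t$ fixed, and for fixed $T_0$ the factor $e^{-\omega_\mu(t-T_0)}$ does not tend to $0$ (it is the $t\to+\infty$ regime where such a splitting is the right tool). The roles of your two pieces are essentially swapped; replacing the whole step by the one-line supremum estimate above repairs the argument and brings it in line with the paper's proof.
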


\begin{proof}
According to \eqref{m17}, for any $\d>0$ we have
  \[
  \begin{array}{l}
  \ds{\left|\int_{-\infty}^t \Smu(t-s)  B_\mu(z^\mu_\psi(s))  ds \right |_{\H_\d} \leq  \frac{M_\mu}{\mu}\sup_{s \leq t} \left|B(\Pi_1 z^\mu_\psi(s)) \right|_{H^{\d-1}} \int_{-\infty}^t e^{-\omega_\mu(t-s)} ds}\\
  \vs
  \ds{ \leq \frac{M_\mu}{\mu\, \omega_\mu} \sup_{s \leq t} \left|B(\Pi_1z^\mu_\psi(s)) \right|_{H^{\d-1}}.}
    \end{array}
  \]
  Therefore, due to Hypothesis \ref{H2}, if we take $\d=1$
  \begin{equation}
   \label{limit-to-zero-bootstrap}
\left| \int_{-\infty}^t \Smu(t-s)  B_\mu(z^\mu_\psi(s))  ds \right |_{\H_1} \leq    \frac{M_\mu\gamma_0}{\mu\, \omega_\mu} \sup_{s \leq t} \left|\Pi_1 z^\mu_\psi(s) \right|_H.
\end{equation}
For the second term in \eqref{m16}, if $\psi \in\,L^2(-\infty,0;H^{2\a})$, then $Q_\mu \psi \in\,L^2((-\infty,0);\H_{1+2(\a+\beta)})$, with
\[|Q_\mu \psi|_{L^2((-\infty,t);\H_{1+2(\a+\beta)})}\leq \frac c\mu \,|\psi|_{L^2((-\infty,t);H^{2\a})},\ \ \ t\leq 0.\]
Due to \eqref{m17}, this yells
  \begin{equation} \label{control-term-goes-to-zero}
\left|  \int_{-\infty}^t \Smu(t-s) \Qmu \psi(s)  ds \right|_{\H_{1+2(\a+\beta)}}
    \leq  \frac{M_\mu}{\mu}\le(\int_0^{\infty}e^{-2\omega_\mu s}\,ds\r)^{1/2}| \psi|_{L^2((-\infty,t);H^{2\a})}.
  \end{equation}
Therefore, from \eqref{m16}, \eqref{limit-to-zero-bootstrap} and \eqref{control-term-goes-to-zero}, we get
\[\begin{array}{l}
\ds{\left | z^\mu_\psi(t) \right|_{\H_1}\leq c_\mu \le(\sup_{s \leq t} \left|\Pi_1 z^\mu_\psi(s) \right|_H+| \psi|_{L^2((-\infty,t);H^{2\a})}\r).}
\end{array}\]
In particular, we have $z^\mu_\psi \in\,L^\infty((-\infty,0);\H_{1})$ and
\[\lim_{t \to -\infty} \left|z^\mu_\psi(t) \right|_{\H_1} =0.
  \]
  Now, by repeating the same arguments, we can prove that for any $n \in\,\nat$, with $n\leq [1+2\beta]$, if
\[z^\mu_\psi \in\,L^\infty((-\infty,0);\H_{n}),\ \ \ \text{and}\ \ \ \lim_{t \to -\infty} \left|z^\mu_\psi(t) \right|_{\H_n} =0,\]
then
\[z^\mu_\psi \in\,L^\infty((-\infty,0);\H_{n+1}),\ \ \ \text{and}\ \ \ \lim_{t \to -\infty} \left|z^\mu_\psi(t) \right|_{\H_{n+1}}=0.\]
Since there exists $\bar{n} \in\,\nat$ such that $\H_{1+2(\a+\beta)}\supset \H_{\bar{n}}$, we can conclude that $z^\mu_\psi$ belongs to  $L^\infty((-\infty,0);\H_{1+2(\a+\beta)})$ and \eqref{m30} holds. Continuity follows easily, by standard arguments.
\end{proof}

\begin{Remark}
{\em \begin{enumerate}
\item From the previous lemma, we have that if $z^\mu_\psi \in\,C((-\infty,0);\H)$ solves equation \eqref{m16} and limit \eqref{m31} holds, then $z^\mu_\psi(t) \in\,\H_{1+2\beta}$, for any $t\leq 0$. In particular $z^\mu_\psi(0) \in\,\H_{1+2\beta}$.
\item In \cite[Lemma 3.5]{cerrok}, it has been proven that the same holds for
equation \eqref{abstract-heat}. Actually, if $\varphi_\psi \in\,C((-\infty,0);H)$ is the solution to
\[    \varphi_\psi(t) = \int_{-\infty}^t e^{(t-s)A} B(\varphi_\psi(s)) ds + \int_{-\infty}^t e^{(t-s)A} Q \psi(s) ds,
  \]
  for $\psi \in\,L^2((-\infty,0);H)$, and
  \[\lim_{t\to-\infty}|\varphi_\psi(t)|_H=0,\]
then $\varphi_\psi \in\,C((-\infty,0);H^{1+2\beta})$ and there exists a constant such that for all $t\leq 0$,
  \begin{equation}
    \left|\varphi_\psi(t) \right|_{H^{1+2\beta}} \leq c\, |\psi|_{L^2((-\infty,0;H)}.
  \end{equation}
  Moreover,
  \begin{equation}
  \label{m21}
  \lim_{t\to-\infty}|\varphi_\psi(t)|_{H^{1+2\beta}}=0.
  \end{equation}
  \end{enumerate}
 }
\end{Remark}

\begin{Lemma} \label{energy-est-lemma}
Let $\a \in\,[0,1/2]$ and let $\psi_1, \psi_2 \in\,L^2((-\infty,0);H^{2\a})$. In correspondence of each $\psi_i$, let $z^\mu_{\psi_i} \in\,C((-\infty,0);\H_{1+2(\a+\beta)})$ be a solution of equation \eqref{m16}, verifying
 \eqref{m31}.
Then,
$z^\mu_{\psi_i}\in\,L^2((-\infty,0);\H_{1+2(\a+\beta)})$, for $i=1,2$, and
there exist $\mu_0>0$  and $c>0$ such that for any $\mu\leq \mu_0$ and $\tau\leq 0$
\begin{equation}
\label{m20}
\begin{array}{l}
\ds{|z^\mu_{\psi_1}-z^\mu_{\psi_2}|_{L^2((-\infty,\tau);\H_{1+2(\a+\beta)})}^2 + \sup_{t\leq \tau} \left|I_\mu(z^\mu_{\psi_1}(t) - z^\mu_{\psi_2}(t)) \right|_{\H_{1+ 2(\alpha +\beta)}}^2 \leq c\,|\psi_1-\psi_2|_{L^2((-\infty,\tau);H^{2\a})}^2,}
\end{array}
\end{equation}
where $I_\mu$ is defined in \eqref{m-1000}.

\end{Lemma}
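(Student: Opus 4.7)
Writing $\delta = 1 + 2(\alpha+\beta)$, $z = z^\mu_{\psi_1} - z^\mu_{\psi_2}$, $\varphi = \Pi_1 z$, and $\psi = \psi_1-\psi_2$, subtracting the two skeleton equations yields
\[\mu\,\partial_t^2 \varphi + \partial_t \varphi = A\varphi + \bigl[B(\varphi_1)-B(\varphi_2)\bigr] + Q\psi,\]
with $|\varphi(t)|_{H^\delta} + |\partial_t \varphi(t)|_{H^{\delta-1}} \to 0$ as $t\to -\infty$ thanks to Lemma \ref{wave-high-reg-conv-to-zero-lem}. My plan is to lift the two-test-function energy argument used in the proof of Lemma \ref{X-sup-bounded-lemma} from the base space $\H$ to the higher-regularity space $\H_\delta$. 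The assumption $\alpha \leq 1/2$ places $\delta-1 = 2(\alpha+\beta)$ in $[0,1+2\beta]$, so Hypothesis \ref{H2} furnishes $|B(\varphi_1)-B(\varphi_2)|_{H^{\delta-1}} \leq \gamma_{\delta-1}|\varphi|_{H^{\delta-1}}$ with $\gamma_{\delta-1} < \alpha_1$; meanwhile the diagonal form of $Q$ in Hypothesis \ref{H1} yields $|Q\psi|_{H^{\delta-1}} \leq c\,|\psi|_{H^{2\alpha}}$, since $\delta - 1 - 2\beta = 2\alpha$.

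First I would take the $H^{\delta-1}$-inner product of the equation with $\partial_t\varphi$. Using $\langle A\varphi,\partial_t\varphi\rangle_{H^{\delta-1}} = -\tfrac 12 \partial_t|\varphi|^2_{H^\delta}$, the two bounds above, and Young's inequality with weights $\epsilon_1 + \epsilon_2 = 1$, this produces
\[\mu\,\partial_t|\partial_t\varphi|^2_{H^{\delta-1}} + |\partial_t\varphi|^2_{H^{\delta-1}} + \partial_t|\varphi|^2_{H^\delta} \leq \tfrac{\gamma_{\delta-1}^2}{\alpha_1\epsilon_1}|\varphi|^2_{H^\delta} + \tfrac{c^2}{\epsilon_2}|\psi|^2_{H^{2\alpha}}.\]
Integrating from $-\infty$ to $\tau \leq 0$ (using the vanishing at $-\infty$) bounds $\mu|\partial_t\varphi(\tau)|^2_{H^{\delta-1}} + |\varphi(\tau)|^2_{H^\delta} + \int_{-\infty}^\tau |\partial_t\varphi|^2_{H^{\delta-1}}\,dt$ in terms of $\int_{-\infty}^\tau |\varphi|^2_{H^\delta}\,dt$ and $|\psi|^2_{L^2((-\infty,\tau);H^{2\alpha})}$; call this the \emph{first energy estimate}.

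Next I would test the equation in $H^{\delta-1}$ against $\varphi$ itself, using the identity $\langle \partial_t^2 \varphi, \varphi\rangle_{H^{\delta-1}} = \tfrac 12 \partial_t^2 |\varphi|^2_{H^{\delta-1}} - |\partial_t\varphi|^2_{H^{\delta-1}}$ and another Young split with weight $\epsilon_3$. Integrating in time and eliminating $\int|\partial_t\varphi|^2_{H^{\delta-1}}\,dt$ by means of the first energy estimate yields
\[\tfrac{\mu}{2}\partial_t|\varphi(\tau)|^2_{H^{\delta-1}} + \tfrac 12|\varphi(\tau)|^2_{H^{\delta-1}} + \rho_\mu\!\int_{-\infty}^\tau|\varphi|^2_{H^\delta}\,dt \leq C|\psi|^2_{L^2((-\infty,\tau);H^{2\alpha})},\]
with $\rho_\mu := 1 - \gamma_{\delta-1}/\alpha_1 - \epsilon_3/2 - 2\mu\gamma_{\delta-1}^2/\alpha_1$. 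Since $\gamma_{\delta-1} < \alpha_1$, picking $\epsilon_3 \in (0, 1 - \gamma_{\delta-1}/\alpha_1)$ and then a threshold $\mu_0>0$ guarantees $\rho_\mu > 0$ uniformly for $\mu \leq \mu_0$. A comparison argument patterned on the one at the end of the proof of Lemma \ref{X-sup-bounded-lemma}, exploiting that both $\int_{-\infty}^\tau|\varphi|^2_{H^\delta}\,dt$ and $|\psi|^2_{L^2((-\infty,\tau);H^{2\alpha})}$ are nondecreasing in $\tau$, then extracts $\int_{-\infty}^\tau|\varphi|^2_{H^\delta}\,dt \leq C \rho_\mu^{-1}|\psi|^2_{L^2((-\infty,\tau);H^{2\alpha})}$. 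Reinserting this bound into the first energy estimate, applied at an arbitrary $t \leq \tau$, controls $|\varphi(t)|^2_{H^\delta} + \mu|\partial_t\varphi(t)|^2_{H^{\delta-1}} = |I_\mu z(t)|^2_{\H_\delta}$ pointwise (hence its supremum), and bounds $\int_{-\infty}^\tau |\partial_t\varphi|^2_{H^{\delta-1}}\,dt$ as well, producing \eqref{m20}.

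The main delicate point is the smallness of $\mu$: positivity of $\rho_\mu$ imposes a threshold depending on $\gamma_{\delta-1}$, $\alpha_1$, and the Young weights, which is where $\mu_0$ enters. A secondary technical issue is that $\partial_t^2\varphi$ a priori lies in a weaker space than $H^{\delta-1}$, so the formal energy identities must be justified by regularizing $\psi_i$ (for instance via frequency truncation on the eigenbasis of $A$), applying Lemma \ref{wave-high-reg-conv-to-zero-lem} to the smoothed problem to inherit extra regularity, performing the computations there, and then passing to the limit by means of the continuity of the control-to-state map supplied by Theorem \ref{L-inverse-thm}.
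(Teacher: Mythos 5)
Your overall strategy is the same as the paper's: lift the two-test-function energy argument to the higher norms (testing in $H^{\delta-1}$ with $\partial_t\varphi$ and with $\varphi$ is exactly the paper's device of applying $(-A)^{\a+\beta}$ and working in $H$), use the Lipschitz continuity of $B$ in $H^{2(\a+\beta)}$ and the $2\beta$-smoothing of $Q$, integrate from $-\infty$ via Lemma \ref{wave-high-reg-conv-to-zero-lem}, and take $\mu_0$ small so that the effective coefficient $\rho_\mu$ stays positive. Up to that point your computations match \eqref{m32}--\eqref{u-dot-int-bound} and \eqref{m40}.

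The gap is in how you dispose of the boundary term $\frac{\mu}{2}\frac{d}{d\tau}\left|\varphi(\tau)\right|_{H^{\delta-1}}^2=\mu\left<\varphi(\tau),\partial_t\varphi(\tau)\right>_{H^{\delta-1}}$ produced by integrating the second energy identity. You propose to "extract" the bound $\int_{-\infty}^\tau|\varphi|^2_{H^\delta}\,dt\leq C\rho_\mu^{-1}|\psi|^2_{L^2((-\infty,\tau);H^{2\a})}$ by a comparison argument modeled on the end of the proof of Lemma \ref{X-sup-bounded-lemma}, but that argument does not transfer: there the time interval is $[0,+\infty)$, unbounded to the right, and the majorant is a constant, so a persistently negative $\mu\frac{d}{dt}|\varphi|^2$ eventually contradicts nonnegativity; here the interval is $(-\infty,0]$, bounded above, and the majorant $|\psi|^2_{L^2((-\infty,\tau))}$ is nondecreasing in $\tau$, so a putative violation at $\tau$ close to $0$ forces nothing (there is no room for the linear decay to produce a contradiction, and the derivative term has no sign). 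Since your final step feeds this unproved $L^2$ bound back into the first energy estimate, the proof as written is incomplete. The repair is the one the paper uses: keep the cross term, bound it by Cauchy--Schwarz as $\mu\left|\left<\varphi(\tau),\partial_t\varphi(\tau)\right>_{H^{\delta-1}}\right|\leq c\sqrt{\mu}\left(\mu\left|\partial_t\varphi(\tau)\right|_{H^{\delta-1}}^2+\left|\varphi(\tau)\right|_{H^{\delta}}^2\right)$, substitute the (boundary-term-retaining) analogue of \eqref{m35} into the first estimate, and absorb the $c\sqrt{\mu}$ contribution into the left-hand side, which contains exactly $\mu|\partial_t\varphi(\tau)|_{H^{\delta-1}}^2+|\varphi(\tau)|_{H^\delta}^2=|I_\mu z(\tau)|^2_{\H_\delta}$, for $\mu\leq\mu_0$ small; this is precisely where the second smallness condition on $\mu_0$ enters in the paper. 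Your closing remark about justifying the formal identities by regularization is a reasonable (and optional) technical supplement; the paper argues formally.
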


\begin{proof}
If we define
\[u(t)=(-A)^{\a+\beta}\Pi_1\le(z^\mu_{\psi_1}(t)- z^\mu_{\psi_2}(t)\r),\ \ \ t\leq 0,\]
and
\[\psi(t) = (-A)^{\a+\beta} Q (\psi_1 (t)- \psi_2(t) ),\ \ \ t\leq 0,\] we have
  \begin{equation}
  \label{m33}
  \begin{array}{l}
  \ds{\mu \frac{\partial^2 u}{\partial t^2}(t) +\frac{\partial u}{\partial t}(t)=A u(t)+ (-A)^{\a+\beta} \left( B(\Pi_1 z^\mu_{\psi_1}(t)) - B(\Pi_1 z^\mu_{\psi_2}(t)) \right)+ \psi(t).}
  \end{array}
  \end{equation}
According to Hypothesis \ref{H2},  $B:H^{2(\a+\beta)}\to H^{2(\a+\beta)}$ is Lipschitz-continuous, and  then
\[\begin{array}{l}
\ds{\le|(-A)^{\a+\beta} \left( B(\Pi_1 z^\mu_{\psi_1}(t)) - B(\Pi_1 z^\mu_{\psi_2}(t)) \right)\r|_H=| B(\Pi_1 z^\mu_{\psi_1}(t)) - B(\Pi_1 z^\mu_{\psi_2}(t))|_{H^{2(\a+\beta)}}}\\
\vs
\ds{\leq \gamma_{2(\a+\beta)}\,|\Pi_1( z^\mu_{\psi_1}(t))-z^\mu_{\psi_2}(t))|_{H^{2(\a+\beta)}}=\gamma_{2(\a+\beta)}\,|u(t)|_H.}
\end{array}\]
Therefore, by taking the scalar product of   both sides with $\partial u/\partial t$, we get
 \begin{equation}
 \label{m32}
 \begin{array}{l}
 \ds{
    \left|\frac{\partial u}{\partial t}(t) \right|_H^2  +\frac{\mu}{2} \frac{d}{dt} \left| \frac{\partial u}{\partial t}(t) \right|_H^2
      +\frac{1}{2} \frac{d}{dt} \left|(-A)^{\frac{1}{2}} u(t) \right|_H^2 }\\
      \vs
      \ds{\leq \gamma_{2(\a+\beta)}\left| u(t) \right|_H \left|\frac{\partial u}{\partial t}(t) \right|_H + \left|\psi(t) \right|_H \left| \frac{\partial u}{\partial t}(t) \right|_H.}
      \end{array}\end{equation}
Now, since
\[  \gamma_{2(\a+\beta)} \left| u(t) \right|_H \left|\frac{\partial u}{\partial t}(t) \right|_H + \left|\psi(t) \right|_H \left| \frac{\partial u}{\partial t}(t) \right|_H
    \leq \frac{1}{2} \left| \frac{\partial u}{\partial t}(t) \right|_H^2 + \gamma_{2(\a+\beta)}^2 \left|u(t) \right|_H^2 + \left|\psi(t) \right|_H^2,
  \]
\eqref{m32} implies
  \begin{equation}
  \label{m34}
    \left|\frac{\partial u}{\partial t}(t) \right|_H^2 +\mu \frac{d}{dt} \left| \frac{\partial u}{\partial t}(t) \right|_H^2 + \frac{d}{dt} \left|(-A)^{\frac{1}{2}} u(t) \right|_H^2 \leq 2\gamma_{2(\a+\beta)}^2\left| u(t) \right|_H^2 + 2 \left|\psi(t) \right|_H^2.
  \end{equation}
Therefore, integrating this expression with respect to $t \in\,(-\infty,\tau)$, we obtain
  \begin{equation} \label{u-dot-int-bound}
  \begin{array}{l}
    \ds{\int_{-\infty}^\tau \left| \frac{\partial u}{\partial t}(t) \right|_H^2 dt +
    |u(\tau)|_{H^1}^2 + \mu \left| \frac{\partial u}{\partial t}(t) \right|_H^2 }\\
    \vs
    \ds{\leq  2\gamma_{2(\a+\beta)}^2 \int_{-\infty}^\tau \left|u(t) \right|_H^2 dt + 2 \int_{-\infty}^\tau \left|\psi(t) \right|_H^2 dt,}
  \end{array}
  \end{equation}
  since, due to Lemma \ref{wave-high-reg-conv-to-zero-lem},
  \[\begin{array}{l}
  \ds{\int_{-\infty}^\tau \frac d{dt}\left( \mu \left| \frac{\partial u}{\partial t}(t) \right|_H^2 + \left| (-A)^{\frac{1}{2}} u(t) \right|_H^2 \right) dt }\\
  \vs
  \ds{= \mu \le|\frac{\partial u}{\partial t}(\tau)\r|_H^2+|u(\tau)|_{H^1}^2- \lim_{T \to -\infty} \left( \mu \left| \frac{\partial u}{\partial t}(T) \right|_H^2 + \left|  u(T) \right|_{H^1}^2 \right) = \mu \le|\frac{\partial u}{\partial t}(\tau)\r|_H^2+|u(\tau)|_{H^1}^2.}
   \end{array}
  \]
  Next we take the inner product of each side of \eqref{m33} with $u(t)$ and use the fact that
  \[
    \left< \frac{\partial^2 u}{\partial t^2}(t), u(t) \right>_H = \frac{1}{2} \frac{d^2}{dt^2} \left|u(t) \right|_H^2 - \left| \frac{\partial u}{\partial t}(t) \right|_H^2
  \]
  and again the Lipschitz-continuity of $B$ in $H^{2(\a+\beta)}$ to get
  \[\begin{array}{l}
  \ds{
    \frac{\mu}{2} \frac{d^2}{dt^2} \left|u(t) \right|_H^2 + \frac{1}{2}\frac{d}{dt} \left| u(t) \right|_H^2 + \hat{\gamma} \left|u(t) \right|_{H^1}^2
    \leq \mu \left| \frac{\partial u}{\partial t}(t) \right|_H^2 + \left< \psi(t), u(t) \right>_H}\\
    \vs
    \ds{\leq \mu \left| \frac{\partial u}{\partial t}(t) \right|_H^2 +\frac {\hat{\gamma}}{2}\,|u(t)|_{H^1}^2+ c\,|\psi(t)|^2_H,}
    \end{array}
  \]
  where $\hat{\gamma}:=1-\gamma_{2(\a+\beta)})/\a_1>0$.
This yields
  \begin{equation}
  \label{m40}
    \mu \frac{d^2}{dt^2} \left|u(t) \right|_H^2 + \frac{d}{dt} \left| u(t) \right|_H^2 + \hat{\gamma}\left| u(t) \right|_{H^1}^2
    \leq 2 \mu \left| \frac{\partial u}{\partial t}(t) \right|_H^2 + c\,\left| \psi(t) \right|_H^2.
  \end{equation}
Combining together \eqref{m34} and \eqref{m40}, we get
\[
\begin{array}{l}
\ds{\mu \frac{d^2}{dt^2} \left| u(t) \right|_H^2 + \frac{d}{dt} \left|u(t) \right|_H^2 + \hat{\gamma} \left|u(t) \right|_{H^1}^2+2\,\mu^2\frac d{dt}\le|\frac{\partial u}{\partial t}(t)\r|_H^2+2\mu\frac d{dt}\le|u(t)\r|_{H^1}^2}\\
\vs
\ds{\leq c_1\,\mu\,|u(t)|_H^2+c_2(1+\mu) \left| \psi(t) \right|_H^2.}
  \end{array}\]
If we take \[\mu<\frac{\hat{\gamma}\a_1}{2c_1},\]
and integrate both sides with respect to $t \in\,(-\infty,\tau)$, as a consequence of \eqref{m30}, we get
  \begin{equation}
  \label{m35}
\frac 12 \int_{-\infty}^\tau |u(t)|_{H^1}^2 dt \leq -2\mu \left< u(\tau), \frac{\partial u}{\partial t}(\tau) \right>_H +c_2(1+\mu) \int_{-\infty}^\tau \left|\psi(t) \right|_H^2 dt.
  \end{equation}
Substituting this  back into \eqref{u-dot-int-bound}, we have
  \[\begin{array}{l}
  \ds{
    \int_{-\infty}^\tau \left( \left| \frac{\partial u}{\partial t}(t) \right|_H^2 + \left|u(t) \right|_{H^1}^2 \right) dt + \mu \le|\frac{\partial u}{\partial t}(\tau)\r|_H^2+|u(\tau)|_{H^1}^2 dt}\\
    \vs
     \ds{\leq -c\,\mu \left< u(\tau),\frac{\partial u}{\partial t}(\tau) \right>_H+c\int_{-\infty}^\tau \left| \psi(t) \right|_H^2 dt.} \\
  \vs
  \ds{\leq c\,\sqrt{\mu} \left( \mu \le|\frac{\partial u}{\partial t}(\tau)\r|_H^2+|u(\tau)|_{H^1}^2 \right) + c \int_{-\infty}^\tau |\psi(t)|_H^2 dt}
    \end{array}
  \]
  Therefore,  since
  \[| \psi(t) |_H\leq c\,|\psi_1(t)-\psi_2(t)|_{H^{2\a}},\]
  and
  \[ \left|I_\mu (z^\mu_{\psi_1}(\tau) - z^\mu_{\psi_2}(\tau)) \right|_{\H_{1 + 2(\alpha + \beta)}} = \mu \le|\frac{\partial u}{\partial t}(\tau)\r|_H^2+|u(\tau)|_{H^1}^2  \]
if we choose $\mu_0$ small enough this yields \eqref{m20}.
\end{proof}
\begin{Remark}
{\em
\begin{enumerate}
\item
Notice that, since $B(0)=0$, we have $z^\mu_0=0$, so that from \eqref{m20} we get
\begin{equation}
\label{m43}
\begin{array}{l}
\ds{|z^\mu_\psi|^2_{L^2((-\infty,\tau);\H_{1+2(\a+\beta)})} + \sup_{t\leq \tau}  \left|I_\mu z^\mu_{\psi}(t) \right|_{\H_{1+ 2(\alpha +\beta)}}^2 \leq c\,|\psi|^2_{L^2((-\infty,\tau);H^{2\a})}},
\end{array}
\end{equation}
for any $\mu\leq \mu_0$ and $\tau\leq 0$.
\item By proceeding as in the proof of Lemma \ref{energy-est-lemma}, we can prove that
\begin{equation}
\label{m20bis}
\begin{array}{l}
\ds{|z^\mu_{\psi_1}-z^\mu_{\psi_2}|_{L^2((-\infty,\tau);\H_{2\beta})}^2 + \sup_{t \leq \tau} \left|I_\mu (z^\mu_{\psi_1}(t) - z^\mu_{\psi_2}(t)) \right|_{\H_{2\beta}}^2\leq c\,|\psi_1-\psi_2|_{L^2((-\infty,\tau);H^{-1})}^2.}
\end{array}
\end{equation}
and
\[|z^\mu_{\psi}|_{L^2((-\infty,\tau);\H_{2\beta})}^2 + \sup_{t \leq \tau} \left|I_\mu z^\mu_{\psi}(t)  \right|_{\H_{2\beta}}^2\leq c\,|\psi|_{L^2((-\infty,\tau);H^{-1})}^2.\]
\end{enumerate}
}
\end{Remark}

\section{A characterization of the quasi-potential}
For any $t_1<t_2$, $\mu>0$ and $z \in\,C((t_1,t_2);\H)$, we define
\begin{equation}
\label{m5}
I^{\mu}_{t_1,t_2}(z) =\frac{1}{2}  \inf \left\{ | \psi|_{L^2((t_1,t_2);H)}^2\,:\, z=z^\mu_{\psi,z_0}\right\},
\end{equation}
 where $z^\mu_{\psi,z_0}$ is a mild solution of the skeleton equation associated with equation  \eqref{abstract}, with deterministic control $\psi \in\,L^2((t_1,t_2);H)$ and initial conditions $z_0$, namely
\begin{equation}
\label{m4}
\frac{dz^\mu_{\psi,z_0}}{dt}(t)=A_\mu z^\mu_{\psi,z_0}(t)+B_\mu(z^\mu_{\psi,z_0}(t))+Q_\mu\psi(t),\ \ \ \ t_1\leq t\leq t_2.
\end{equation}
As in Definition \ref{def2.4}, for $\e,\mu>0$ and $z_0 \in\,\H$ we denote by $z^\mu_{\e,z_0} \in\,L^2(\Omega;C([0,T];\H))$ the mild solution of equation \eqref{abstract}.
Since the mapping $B_\mu:\mathcal{H}\to\mathcal{H}$ is Lipschitz-continuous and the noisy perturbation in \eqref{abstract} is of additive type, as an immediate consequence of  the contraction lemma, for any fixed $\mu>0$ the family $\{\mathcal{L}(z^\mu_{\e,z_0})\}_{\e>0}$  satisfies a large deviation principle in $C([t_1,t_2];\H)$, with action functional  $I^{\mu}_{t_1,t_2}$. In particular,
for any $\delta>0$ and $T>0$,
  \begin{equation} \label{LDP-lower-bound}
    \liminf_{\epsilon \to 0}  \epsilon \log \left( \inf_{z_0 \in \H} \P \left( \left|z^\mu_{\e,z_0}- z^{\mu}_{\psi,z_0} \right|_{C([0,T];\H)} < \delta \right) \right) \ge - \frac{1}{2} |\psi|_{L^2((0,T);H)}^2
  \end{equation}
  and, if $K^\mu_{0,T}(r) = \{z \in C([0,T];\H): I^\mu_{0,T}(z) \leq r \}$,
  \begin{equation} \label{LDP-upper-bound}
    \limsup_{\epsilon \to 0} \epsilon \log \left( \sup_{z_0 \in \H} \P \left( \textnormal{dist}_\H(z^{\mu}_{\epsilon,z_0}, K^\mu_{0,T}(r)) > \delta  \right) \right) \leq - r.
  \end{equation}

Analogously, if for any $\e>0$ $u_\e$ denotes the mild solution of equation \eqref{abstract-heat}, the family $\{\mathcal{L}(u_\e)\}_{\e>0}$  satisfies a large deviation principle in $C([t_1,t_2];H)$ with action functional
\begin{equation}
\label{m66}
I_{t_1,t_2}(\varphi) = \inf \left\{ \frac{1}{2} \left| \psi \right|_{L^2([t_1,t_2];H)}^2\, :\,
                               \varphi=\varphi_{\psi}\r\},\end{equation}
where     $\varphi_{\psi}$ is a mild solution of the skeleton equation associated with equation \eqref{abstract-heat}
\[\frac{du}{dt}(t)=A u(t)+B(u(t))+Q\psi(t),\ \ \ \ t_1\leq t\leq t_2.\]
In particular, the functionals $I^{\mu}_{t_1,t_2}$ and $I_{t_1,t_2}$ are lower semi-continuous and have compact level sets. Moreover, it is not difficult to show that for any compact sets $E\subset H$ and $\mathcal{E}\subset \mathcal {H}$, the level sets
\[ K_{E,t_1,t_2}(r) = \left\{\varphi \in C([t_1,t_2];H)\ ;\  I_{t_1,t_2}(\varphi) \leq r,\ \varphi(t_1) \in\, E \right\}
\]
and
\[
K^\mu_{\mathcal{E},t_1,t_2}(r) = \left\{ z \in C([t_1,t_2];\H)\ ;\  I^{\mu}_{t_1,t_2}(z) \leq r,\ z(t_1) \in \,\mathcal{E} \right\}
\]
are compact.

In what follows, for the sake of brevity, for any $\mu>0$  and $t \in\,(0,+\infty]$  we shall define $I^{\mu}_t:=I^{\mu}_{0,t}$ and $I^{\mu}_{-t}:=I^{\mu}_{-t,0}$ and, analogously, for any $t \in\,(0,+\infty]$ we shall define $I_t:=I_{0,t}$ and $I_{-t}:=I_{-t,0}$. In particular, we shall set
\[I^\mu_{-\infty}(z)=\sup_{t>0}I^\mu_{-t}(z),\ \ \ \ \ I_{-\infty}(\varphi)=\sup_{t>0}I_{-t}(\varphi).\]
Moreover, for any $r>0$ we shall set
\[K^\mu_{-\infty}(r)=  \left\{ z \in C((-\infty,0];\H) \ ;\  \lim_{t \to -\infty} |z(t)|_\H =0,\   I^\mu_{-\infty}(z) \leq r \right\}\]
and
\[  K_{-\infty}(r) = \left\{\varphi \in C((-\infty,0];H)\ ;\  \lim_{t \to -\infty} |\varphi(t)|_H =0,\  I_{-\infty}(\varphi) \leq r \right\}.\]

Once we have introduced the action functionals $I^{\mu}_{t_1,t_2}$ and $I_{t_1,t_2}$, we can introduce the corresponding {\em quasi-potentials}, by setting for any $\mu>0$ and $(x,y) \in\,\H$
\[V^\mu(x,y) = \inf \left\{ I^\mu_{0,T}(z) \ ;\  z(0) = 0,\  z(T) =(x,y), T > 0 \right\},\]
and for any $x \in\,H$
\[  V(x) = \inf \left\{ I_{0,T}(\varphi)\ ;\  \varphi(0)=0,\  \varphi(T) = x, \ T\ge 0 \right\}.\]
Moreover, for any $\mu>0$ and $x \in\,H$, we shall define
  \begin{equation} \label{V_mu-def}
    V_\mu(x) = \inf_{y \in H^{-1}} V^\mu(x,y).
  \end{equation}
  In \cite[Proposition 5.1]{cerrok} it has been proved that the level set $K_{-\infty}(r)$ is compact in the space $C((-\infty,0];H)$, endowed with the uniform convergence on bounded sets, and in \cite[Proposition 5.4]{cerrok} it has been proven that
  \[V(x)=\min \le\{\,I_{-\infty}(\varphi)\ ;\ \varphi \in\,C((-\infty,0];H),\ \lim_{t \to -\infty} |\varphi(t)|_H =0, \ \varphi(0)=x\,\r\}.\]
In what follows we want to prove an analogous result for $K^\mu_{-\infty}$, $V^\mu(x,y)$ and $V_\mu(x)$.

\begin{Theorem} \label{compact-level-sets-half-line-thm}
  For small enough $\mu>0$, the level sets $K^\mu_{-\infty}(r)$ are compact in the topology of uniform convergence on bounded intervals.
\end{Theorem}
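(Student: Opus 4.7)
The plan is to combine weak compactness of controls in $L^2$ with the regularity gain from Lemma~\ref{energy-est-lemma} and with the finite-interval level-set compactness recalled in the preliminaries, and then to pass to the limit in the skeleton equation. Given $\{z_n\}\subset K^\mu_{-\infty}(r)$, since $I^\mu_{-T}(z_n)\leq r$ for every $T>0$ and the skeleton equation \eqref{m16} uniquely determines the control $\psi$ from $z$ (as $Q_\mu$ is injective and the problem deterministic), the minimizers on the nested intervals $(-T,0)$ piece together into a single control $\psi_n\in L^2((-\infty,0);H)$ with $z_n=z^\mu_{\psi_n}$ and $\frac12|\psi_n|_{L^2((-\infty,0);H)}^2=I^\mu_{-\infty}(z_n)\leq r$. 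Up to a subsequence, $\psi_n\rightharpoonup\psi$ weakly in $L^2((-\infty,0);H)$, with $|\psi|_{L^2}^2\leq 2r$.

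To upgrade to strong convergence of the trajectories, I would apply Lemma~\ref{energy-est-lemma} with $\alpha=0$ and $\psi_2=0$ (so that $z^\mu_0=0$): for $\mu\leq\mu_0$ and every $\tau\leq 0$,
\[
\sup_{t\leq\tau}|I_\mu z_n(t)|_{\H_{1+2\beta}}^2
+|z_n|_{L^2((-\infty,\tau);\H_{1+2\beta})}^2
\leq c\,|\psi_n|_{L^2((-\infty,\tau);H)}^2\leq 2cr.
\]
Since $\H_{1+2\beta}\hookrightarrow\H$ compactly (by \eqref{m18}), the set $\{z_n(-T)\}_n$ lies in a compact subset $\mathcal{E}_T\subset\H$ for each $T>0$. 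The compactness of $K^\mu_{\mathcal{E}_T,-T,0}(r)$ in $C([-T,0];\H)$ recalled in the preliminaries then extracts a subsequence converging uniformly on $[-T,0]$, and a standard diagonal argument produces a single subsequence $z_{n_k}\to z$ uniformly on each bounded subinterval of $(-\infty,0]$.

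It remains to verify $z\in K^\mu_{-\infty}(r)$. Passing to the limit in the mild formulation \eqref{m16} (using strong convergence of $z_{n_k}$ and Lipschitz continuity of $B$ for the nonlinear term, and weak convergence of $\psi_{n_k}$ for the control term) identifies $z=z^\mu_\psi$. Each $I^\mu_{-T}$ is lower semi-continuous with respect to uniform convergence on $[-T,0]$, so $I^\mu_{-\infty}=\sup_{T>0}I^\mu_{-T}$ is lower semi-continuous in the topology of uniform convergence on bounded intervals, which gives $I^\mu_{-\infty}(z)\leq\liminf_k I^\mu_{-\infty}(z_{n_k})\leq r$. A second application of Lemma~\ref{energy-est-lemma} to $z=z^\mu_\psi$ yields
\[
|z(t)|_\H\leq c_\mu\,|I_\mu z(t)|_{\H_{1+2\beta}}\leq c_\mu\,c\,|\psi|_{L^2((-\infty,t);H)}\longrightarrow 0\qquad\text{as }t\to-\infty,
\]
because $\psi\in L^2((-\infty,0);H)$. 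Hence $z\in K^\mu_{-\infty}(r)$.

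The main obstacle is the compactness step: the weak $L^2$-bound on $\psi_n$ must be upgraded to strong precompactness of the trajectories, which is precisely what the regularity gain $\H\to\H_{1+2\beta}$ in Lemma~\ref{energy-est-lemma} supplies (and which relies on the smallness condition on $\mu$). A secondary subtlety, absent in bounded-interval problems, is recovering the condition $|z(t)|_\H\to 0$ at $-\infty$: this cannot be read off pointwise from the $z_n$, but descends to $z$ via the $L^2$-integrability of the limit control $\psi$.
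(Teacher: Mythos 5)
Your overall strategy is the same as the paper's: use the a priori bound of Lemma \ref{energy-est-lemma} (i.e. \eqref{m43}) to place the time-$(-N)$ values $z_n(-N)$ in a fixed compact subset of $\H$, invoke the compactness of the finite-interval level sets $K^\mu_{\mathcal{E},-N,0}(r)$, diagonalize, and then identify the limit as a solution of \eqref{m16} with a control of norm at most $\sqrt{2r}$. The only genuine variation is cosmetic: you extract a weak $L^2$ limit of the controls $\psi_n$ and pass to the limit in the mild equation, whereas the paper recovers the control directly from the limit trajectory through the weak identity $\psi=Q^{-1}(\mu\varphi''+\varphi'-A\varphi-B(\varphi))$ and bounds its norm interval by interval; your use of lower semicontinuity of $I^\mu_{-\infty}=\sup_T I^\mu_{-T}$ to get $I^\mu_{-\infty}(z)\le r$ is fine and equivalent in substance.

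The gap is in your last step, the recovery of $\lim_{t\to-\infty}|z(t)|_\H=0$. You obtain it by ``a second application of Lemma \ref{energy-est-lemma} to $z=z^\mu_\psi$'', but that lemma's hypotheses require the solution of \eqref{m16} to satisfy \eqref{m31}, i.e. exactly the decay at $-\infty$ you are trying to prove (the decay is what makes the boundary terms at $-\infty$ vanish in the energy estimates, via Lemma \ref{wave-high-reg-conv-to-zero-lem} and \eqref{m30}); nothing proved so far guarantees that an arbitrary solution of the convolution equation \eqref{m16} decays, so this step is circular as written. The repair is available from what you have already established: the uniform bound $|z_n|_{L^2((-\infty,0);\H_{1+2\beta})}\le c\sqrt{r}$ from \eqref{m43} passes to the limit (Fatou on bounded intervals, using uniform convergence on compacts), giving $z\in L^2((-\infty,0);\H)$; then in the mild form \eqref{m16} one estimates the control term as in \eqref{control-term-goes-to-zero} by $c_\mu|\psi|_{L^2((-\infty,t);H)}$ and the nonlinear term by Cauchy--Schwarz against the exponential kernel by $c_\mu|z|_{L^2((-\infty,t);\H)}$, both of which vanish as $t\to-\infty$. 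This is precisely how the paper concludes, and it replaces your circular appeal to Lemma \ref{energy-est-lemma} by a direct estimate on the limit equation.
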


\begin{proof}
Suppose that $z_n$ is a sequence in $K^\mu_{-\infty}(r)$ where $\mu\leq \mu_0$ and $\mu_0$ is the constant introduced in Lemma \ref{energy-est-lemma}. Let $c$  be the constant from that lemma
and let
  \begin{equation*}
    \mathcal{E}: = \left\{z \in \H: |C_\mu^{-\frac{1}{2}} z |_\H \leq  \sqrt{2cr}  \right\}
  \end{equation*}
  By Lemma \ref{energy-est-lemma},  $z_n \in K^\mu_{\mathcal{E},-N,0}(r)$, for any $N \in \mathbb{N}$.  Since $\mathcal{E}$ is compact in $\H$, in view of what we have seen above $K^\mu_{\mathcal{E},-N,0}(r)\subset C([-N,0];\H)$ is compact, for each $N \in\,\nat$. Then, by using a diagonalization procedure,  we can find a subsequence of $\{z_n\}$ that converges uniformly to a limit  $z^\mu \in\,C((-\infty,0];\H)$, uniformly on $[-N,0]$ for all $N$.  This means that there exist controls $\psi_N$ such that for $t \in\,[-N,0]$,
\[    z^\mu(t) =\Smu(t+N) z^\mu(-N)+\int_{-N}^t \Smu(t-s)  B_\mu(z^\mu(s)) ds +\int_{-N}^t \Smu(t-s) \Qmu \psi_N(s)  ds\]
and
\[ \frac{1}{2} |\psi_N|_{L^2([-N,0];H)}^2 \leq r.\]
  All of these $\psi_N$ coincide, because if $\varphi = \Pi_1 z^\mu$ satisfies the above equation,
  \[ \psi_N(t) = Q^{-1} \left( \mu \frac{\partial^2 \varphi}{\partial t^2}(t) + \frac{\partial \varphi}{\partial t}(t) - A \varphi(t) - B(\varphi(t))  \right) \]
  weakly.  Therefore, we can let $\psi=\psi_N$ and notice that
  \[\frac{1}{2}|\psi|_{L^2((-\infty,0);H)}^2 \leq r \]
This implies that
  for each $N_0\in\,\nat$
\[z^\mu(t) = \Smu(t+N_0) z^\mu(-N_0) + \int_{-N_0}^t \Smu(t-s) B_\mu(z^\mu(s)) ds+ \int_{-N_0}^t \Smu(t-s) \Qmu \psi(s)  ds.\]
Thus, by taking the limit as $N_0 \to +\infty$, we conclude that
\[    z^\mu(t) = \int_{-\infty}^t \Smu(t-s) B_\mu(z^\mu(s))  ds +  \int_{-\infty}^t \Smu(t-s) \Qmu \psi(s)  ds,\ \ \ \ t\leq 0.
  \]

Lastly, we need to show that
  \begin{equation*}
    \lim_{t \to -\infty} |z^\mu(t)|_\H= 0.
  \end{equation*}
  By \eqref{m43}, each $z_n$ has the property that
\[|z_n|_{L^2((-\infty,0);\H)} \leq c \sqrt{r}.
\]
Since $z_n \to z^\mu$ uniformly in $C((-N,0);\H)$ for each $N$,
\[|z^\mu|_{L^2((-\infty,0);\H)} = \lim_{N \to +\infty} |z^\mu|_{L^2((-N,0);\H)} \leq c \sqrt{r}.    \]

Next, by \eqref{control-term-goes-to-zero} and Hypothesis \ref{H2},
\[ \left|z^\mu(t) \right|_{\H_1} = \left| \int_{-\infty}^t \Smu(t-s) \left( B_\mu( z^\mu(s)) + Q\mu \psi(s) \right) ds \right|_{H_1}
 \leq c \left| z^\mu \right|_{L^2((-\infty,t);\H)} + c\left|\psi \right|_{L^2((-\infty,t);H^{-2\beta})}.\]
Because $z^\mu \in L^2((-\infty,0);\H)$, and $\psi \in L^2((-\infty,0);H)$,
\[ \lim_{t \to -\infty} \left| z^\mu(t) \right|_{\H_1} =0.
\]

\end{proof}

\begin{Corollary} \label{path-existence-corollary}
There exists $\mu_0>0$ such that  for any $\psi \in L^2((-\infty,0);H)$ and $\mu\leq \mu_0$ there exists $z^\mu_\psi \in C((-\infty,0];\H)$ such that
 \begin{equation}
 \label{m56}
   z^\mu_\psi(t)= \int_{-\infty}^t \Smu(t-s)  B_\mu(z^\mu_\psi(s))  ds+ \int_{-\infty}^t \Smu(t-s) \Qmu \psi(s)  ds,\ \ \ \ t\leq 0,
   \end{equation}
Moreover,
 \[
   \lim_{t \to -\infty} |z_\psi^\mu(t)|_\H =0.
 \]
\end{Corollary}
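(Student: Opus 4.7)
\textbf{Proof plan for Corollary \ref{path-existence-corollary}.}  The idea is to approximate $\psi$ by truncated controls and use the compactness result of Theorem \ref{compact-level-sets-half-line-thm} (together with the stability estimate of Lemma \ref{energy-est-lemma}) to pass to the limit.

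For each $N \in \mathbb{N}$, the classical contraction argument based on the semigroup bound \eqref{m17} and the Lipschitz continuity of $B_\mu$ produces a unique mild solution $z^\mu_N \in C([-N,0];\H)$ of the skeleton equation \eqref{m4} on the interval $[-N,0]$ with initial datum $z^\mu_N(-N)=0$ and control $\psi|_{[-N,0]}$. I would extend $z^\mu_N$ to all of $(-\infty,0]$ by declaring $z^\mu_N(t)=0$ for $t<-N$. Denoting $\tilde \psi_N(s):=\psi(s)\,\mathbf{1}_{[-N,0]}(s)$, the extended function then satisfies, for every $t\leq 0$,
\[
z^\mu_N(t)=\int_{-\infty}^t S_\mu(t-s) B_\mu(z^\mu_N(s))\,ds+\int_{-\infty}^t S_\mu(t-s) Q_\mu \tilde \psi_N(s)\,ds,
\]
because $B_\mu(0)=0$ and $\tilde \psi_N$ vanishes on $(-\infty,-N)$. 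In particular $\lim_{t\to -\infty}|z^\mu_N(t)|_\H=0$ and $I^\mu_{-\infty}(z^\mu_N)\leq \tfrac12|\tilde \psi_N|_{L^2((-\infty,0);H)}^2\leq \tfrac12|\psi|_{L^2((-\infty,0);H)}^2=:r$. Hence the whole sequence $\{z^\mu_N\}$ lies in $K^\mu_{-\infty}(r)$.

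Now, for $\mu\leq \mu_0$, Theorem \ref{compact-level-sets-half-line-thm} asserts that $K^\mu_{-\infty}(r)$ is compact in the topology of uniform convergence on bounded intervals. Thus, up to a subsequence, $z^\mu_N\to z^\mu$ uniformly on each $[-T,0]$ for some $z^\mu \in K^\mu_{-\infty}(r)$, which already gives $\lim_{t\to -\infty}|z^\mu(t)|_\H=0$. It remains to identify the control associated with $z^\mu$ as $\psi$ itself. I would do this by passing to the limit directly in the integral equation: the first term converges because $B_\mu$ is Lipschitz on $\H$ with constant $\gamma_0/\mu$ and $\|S_\mu(t-\cdot)\|_{\mathcal{L}(\H)}$ is integrable in time, so dominated convergence applies; the second term converges because the linear map $\eta\mapsto \int_{-\infty}^t S_\mu(t-s)Q_\mu \eta(s)\,ds$ is bounded from $L^2((-\infty,t);H)$ into $\H$ and $\tilde \psi_N\to \psi$ strongly in $L^2((-\infty,0);H)$ by dominated convergence. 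Passing to the limit then yields \eqref{m56}.

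The only delicate point is making sure that the limiting control really equals the prescribed $\psi$, and not some other element produced by compactness. This is handled by the strong $L^2$-convergence $\tilde \psi_N\to \psi$, so no quasi-potential minimization or weak-limit identification via the differential equation (as in the proof of Theorem \ref{compact-level-sets-half-line-thm}) is needed. Alternatively, one could replace Theorem \ref{compact-level-sets-half-line-thm} by a direct Cauchy argument based on the stability estimate \eqref{m20bis} applied to $z^\mu_N$ and $z^\mu_M$, since both lie in the regularity class required by Lemma \ref{energy-est-lemma} and $|\tilde \psi_N-\tilde \psi_M|_{L^2((-\infty,0);H^{-1})}\to 0$, but invoking the compactness already established seems the most concise route.
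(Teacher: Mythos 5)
Your proposal is correct and follows essentially the same route as the paper: build the solutions $z^\mu_N$ on $[-N,0]$ with zero initial datum, extend by zero, note they all lie in the level set $K^\mu_{-\infty}\bigl(\tfrac12|\psi|^2_{L^2((-\infty,0);H)}\bigr)$, and use the compactness of Theorem \ref{compact-level-sets-half-line-thm} to extract a limit solving \eqref{m56}. The only (minor) difference is in identifying the limit equation: you pass to the limit directly in the full-line equation, which requires a dominating function, i.e.\ a sup bound on $|z^\mu_N(s)|_{\H}$ uniform in $N$ and $s$ (available from the level-set membership together with \eqref{m43}), whereas the paper avoids this by first taking the limit on each finite window $[-N_0,0]$ in the form with initial condition at $-N_0$ and only then letting $N_0\to\infty$.
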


\begin{proof}
A standard fixed point argument shows that for any $\mu>0$ and $ N \in\,\nat$ there exists $z^{\mu}_{N} \in C([-N,0];\H)$ satisfying
   \[
     z^{\mu}_{N}(t) = \int_{-N}^t \Smu(t-s) B_\mu(z^{\mu}_{N}(s))  ds +\int_{-N}^t \Smu(t-s) \Qmu \psi(s)  ds.
   \]
Each $z^\mu_N$ can be seen as an element of $C((-\infty,0];H)$, just  by extending it to  $z^{\mu}_{N}(t) = 0$, for all $t< -N$.
   According to  Theorem \ref{compact-level-sets-half-line-thm}, there exists a subsequence $\{z^\mu_{N_k}\}$ converging to  some   $z^\mu \in K^\mu_{-\infty}\left(\frac{1}{2} |\psi|_{L^2((-\infty,0);H)}^2\right)$, uniformly on compact sets.  We notice that for any fixed $N_0 \in\,\nat$ and $t \ge -N_0$
   \[
     z^{\mu}_{N}(t) = \Smu(t+ N_0)z^\mu_N(-N_0)+\int_{-N_0}^t \Smu(t-s) B_\mu(z^{\mu}_{N}(s))  ds +\int_{-N_0}^t \Smu(t-s) \Qmu \psi(s)  ds.
   \]
Therefore, by taking the limit as $N \to +\infty$, we obtain
   \[
     z^{\mu}(t) = \Smu(t+ N_0)z^{\mu}(-N_0)+ \int_{-N_0}^t \Smu(t-s) B_\mu(z^{\mu}(s))  ds +\int_{-N_0}^t \Smu(t-s) \Qmu \psi(s)  ds.
   \]
Finally, if we let $N_0 \to +\infty$, we see that $z^\mu$ solves equation \eqref{m56}.

\end{proof}

As $K_{-\infty}(r)$ is compact in $C((-\infty,0];H)$ with respect to the uniform convergence on bounded intervals, we have analogously that
for any $\varphi \in\,L^2((-\infty,0)$ there exists $\varphi_\psi \in\,C((-\infty,0];H)$ such that
\[    \varphi_\psi(t) = \int_{-\infty}^t e^{(t-s)A} B(\varphi(s)) ds + \int_{-\infty}^t e^{(t-s)A} Q \psi(s) ds,
 \]
 and
 \[\lim_{t\to-\infty}|\varphi_\psi(t)|_H=0.\]

In \cite{cerrok}, it has been proved that the $V(x)$ can be characterized as
  \[
    V(x) = \inf \left\{ I_{-\infty}(\varphi) : \lim_{t \to -\infty} \varphi(t) = 0,  \varphi(0) = x \right\}.
  \]
  Here, we want to prove that an analogous result holds for $V^\mu(x,y)$ and $V_\mu(x)$, at least for $\mu$ sufficiently small.

\begin{Theorem} \label{quasipotential-representation-thm}
For small enough $\mu>0$, we have the following representation for the quasipotentials $V^\mu(x,y)$
  \begin{equation}
    \label{m69}
    V^\mu(x,y) = \min \left\{ I^\mu_{-\infty}(z): \lim_{t \to -\infty} |z(t)|_\H =0, z(0)=(x,y) \right\},
  \end{equation}
and for $V_\mu(x)$
  \begin{equation}
  \label{m68}
    V_\mu(x) = \min \left\{ I^\mu_{-\infty}(z): \lim_{t \to -\infty} |z(t)|_\H =0, \Pi_1z(0) = x \right\},
  \end{equation}
  whenever these quantities are finite.
\end{Theorem}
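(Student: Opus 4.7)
I prove (\ref{m69}) by establishing the two opposite inequalities between $V^\mu(x,y)$ and the infimum on the right-hand side, and then verify that the infimum is attained using the compactness of level sets from Theorem \ref{compact-level-sets-half-line-thm}. Formula (\ref{m68}) will then follow from (\ref{m69}) by exchanging the infimum over $y$ with the infimum in the characterization. The easy direction, namely $\inf\{I^\mu_{-\infty}(z): z(0) = (x,y),\ |z(t)|_\H \to 0\} \leq V^\mu(x,y)$, is handled as follows: fix $\epsilon > 0$ and choose a finite-time path $z_1 \in C([0,T];\H)$ with $z_1(0) = 0$, $z_1(T) = (x,y)$ and $I^\mu_{0,T}(z_1) \leq V^\mu(x,y) + \epsilon$. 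Since $B_\mu(0) = 0$, the zero trajectory solves the skeleton equation with zero control, so defining $\tilde z(t) = z_1(t+T)$ for $t \in [-T,0]$ and $\tilde z(t) = 0$ for $t \leq -T$ produces a continuous path on $(-\infty,0]$ that vanishes at $-\infty$, ends at $(x,y)$, and satisfies $I^\mu_{-\infty}(\tilde z) = I^\mu_{0,T}(z_1) \leq V^\mu(x,y) + \epsilon$. Letting $\epsilon \to 0$ yields the inequality.

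\textbf{Hard direction.} For the reverse inequality, let $z^*$ be an admissible path with $I^\mu_{-\infty}(z^*) < +\infty$. Lemma \ref{wave-high-reg-conv-to-zero-lem} applied with $\alpha = 0$ upgrades the decay to $z^*(-T) \in \H_{1+2\beta}$ with $|z^*(-T)|_{\H_{1+2\beta}} \to 0$ as $T \to +\infty$. Fix $T_0 \geq T_\mu$ from Theorem \ref{L-inverse-thm}. The plan is to construct a short ``connector'' path $z_c \in C([0,T_0];\H)$ steering $0$ to $z^*(-T)$ with action of order $|z^*(-T)|^2_{\H_{1+2\beta}}$, and concatenate it with the shifted tail $t \mapsto z^*(t - T_0 - T)$ on $[T_0, T_0 + T]$ to obtain an admissible finite-time path from $0$ to $(x,y)$. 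Theorem \ref{L-inverse-thm} supplies the \emph{linear} control $\bar\psi := (L^\mu_{0,T_0})^{-1} z^*(-T)$ satisfying $|\bar\psi|^2_{L^2} \leq c(\mu,T_0)\,|z^*(-T)|^2_{\H_{1+2\beta}}$; a contraction mapping argument in $L^2((0,T_0);H)$, exploiting the Lipschitz continuity of $B_\mu$ granted by Hypothesis \ref{H2} and the smallness of $|z^*(-T)|_{\H_{1+2\beta}}$, perturbs $\bar\psi$ into a control $\psi_T$ whose trajectory under the full \emph{nonlinear} skeleton equation reaches $z^*(-T)$ at time $T_0$, with $L^2$ norm of the same order. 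The concatenation has action bounded by
\[
\tfrac{1}{2}|\psi_T|^2_{L^2} + I^\mu_{-T,0}(z^*) \leq c(\mu,T_0)\,|z^*(-T)|^2_{\H_{1+2\beta}} + I^\mu_{-\infty}(z^*),
\]
so $V^\mu(x,y)$ is bounded by the same expression; sending $T \to +\infty$ and then infimizing over $z^*$ completes this direction.

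\textbf{Attainment, (\ref{m68}), and main obstacle.} To attain the minimum, I take a minimizing sequence $(z_n)$ eventually lying in $K^\mu_{-\infty}(r+1)$, where $r$ denotes the infimum. By Theorem \ref{compact-level-sets-half-line-thm} a subsequence converges uniformly on bounded intervals to some $z^\circ$; the lower semi-continuity of each $I^\mu_{-T,0}$ together with $I^\mu_{-\infty} = \sup_T I^\mu_{-T,0}$ gives $I^\mu_{-\infty}(z^\circ) \leq r$, while the endpoint condition $z^\circ(0) = (x,y)$ and the decay $|z^\circ(t)|_\H \to 0$ (via membership in the level set) pass to the limit. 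Formula (\ref{m68}) follows by writing $V_\mu(x) = \inf_y V^\mu(x,y)$, applying (\ref{m69}), and exchanging the two infima to get $V_\mu(x) = \inf\{I^\mu_{-\infty}(z): \Pi_1 z(0) = x,\ |z(t)|_\H \to 0\}$; attainment is obtained by the same compactness argument, since $\Pi_1$ is continuous. The main obstacle is the connector construction: Theorem \ref{L-inverse-thm} supplies linear controllability only onto the range $\H_{1+2\beta}$ of $C_\mu^{1/2}$, and absorbing the nonlinearity $B_\mu$ via a fixed-point iteration requires $|z^*(-T)|_{\H_{1+2\beta}}$ to be small. This smallness is secured precisely because Lemma \ref{wave-high-reg-conv-to-zero-lem} upgrades $|z^*(-T)|_\H \to 0$ to $|z^*(-T)|_{\H_{1+2\beta}} \to 0$; without this smoothing the connector could not be built with action vanishing as $T \to +\infty$, and the representation would fail.
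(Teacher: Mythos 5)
Your overall architecture matches the paper's: the easy inequality by extending a finite-time path by zero, the regularity upgrade of the tail via Lemma \ref{wave-high-reg-conv-to-zero-lem}, a short connector built from the linear controllability result of Theorem \ref{L-inverse-thm}, and attainment plus \eqref{m68} via Theorem \ref{compact-level-sets-half-line-thm} and lower semicontinuity. The one step that does not hold up as stated is the connector: you claim that a contraction mapping in $L^2((0,T_0);H)$, ``exploiting the Lipschitz continuity of $B_\mu$ and the smallness of $|z^*(-T)|_{\H_{1+2\beta}}$,'' produces a control $\psi_T$ steering the \emph{nonlinear} skeleton equation exactly to $z^*(-T)$. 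Smallness of the target does not help here: the natural fixed-point map $\psi \mapsto (L^\mu_{0,T_0})^{-1}\bigl(z^*(-T) - \int_0^{T_0} S_\mu(T_0-s)B_\mu(z^\mu_\psi(s))\,ds\bigr)$ has Lipschitz constant of order $c(\mu,T_\mu)\,\gamma_{2\beta}\,M_\mu/(\mu\,\omega_\mu)$ times a Gronwall factor, a fixed quantity depending on $\mu$ and the Lipschitz constant of $B$ (Hypothesis \ref{H2} only gives $\gamma_{2\beta}<\alpha_1$, not smallness), and it cannot be made $<1$ by shrinking the target (which only shrinks the ball where the fixed point would live) nor by tuning $T_0$ (the bound of Theorem \ref{L-inverse-thm} degenerates for small times and does not vanish for large ones). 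So exact nonlinear controllability, which is the crux of the hard direction, is asserted rather than proved.

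The paper avoids this problem entirely, and your argument can be repaired the same way: take the \emph{linear} trajectory $\zeta(t)=\int_{t_\e-T_\mu}^{t}S_\mu(t-s)Q_\mu\psi_\e(s)\,ds$ with $\psi_\e=(L^\mu_{t_\e-T_\mu,t_\e})^{-1}z^\mu(t_\e)$, and observe that $\zeta$ is automatically a trajectory of the nonlinear skeleton equation for the modified control $\psi_\e - Q^{-1}B(\Pi_1\zeta)$; since $B(0)=0$ and $B$ is Lipschitz on $H^{2\beta}$, the extra term satisfies $|Q^{-1}B(\Pi_1\zeta(s))|_H\leq c\,\gamma_{2\beta}|\zeta(s)|_{\H_{2\beta}}$, and the estimate \eqref{m60} shows its $L^2$ cost is $O(\e^2)$, so $I^\mu_{t_\e-T_\mu,t_\e}(\zeta)\leq c_\mu\e^2$ without any fixed-point argument. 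With that substitution your proof coincides with the paper's; as written, the contraction step is a genuine gap.
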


\begin{proof}
  From the definitions of $I^\mu_{t_1,t_2}$, it is clear that
 \[
    V^\mu(x,y) = \inf \left\{ I^\mu_{t_1,0}(z): z(t_1) = 0, z(0) =(x,y), t_1 \leq 0 \right\}.
  \]
Now, if we define
  \begin{equation}
    M^\mu(x,y) = \inf \left\{ I^\mu_{-\infty}(\varphi): \lim_{t \to -\infty} |z(t)|_\H =0, z(0) = (x,y)  \right\},
  \end{equation}
it is immediate to check that $M^\mu(x,y) \leq V^\mu(x,y)$, for any $(x,y) \in\,\H$.  To see this, we observe that if $z \in C([t_1,0];\H)$, with $z(t_1) = 0$ and $z(0)=(x,y)$, then
  \begin{equation}
    \hat{z} (t) = \begin{cases}
                            0, &t \leq t_1 \\
                            z(t), & t_1 < t \leq 0
                          \end{cases}
  \end{equation}
  has the property that $\hat{z}(0)=(x,y)$, and $|\hat{z}(t)|_{\H}\to 0$, as $t\to -\infty$. Moreover,
  \[I^\mu_{-\infty}(\hat{z}) = I^\mu_{t_1,0}(z).\]
Therefore, we need to show that $V^\mu(x,y) \leq M^\mu(x,y)$, for all $(x,y) \in\,\H$.

If $M^\mu(x,y) = +\infty$ there is nothing to prove. So, assume that $M^\mu(x,y) < +\infty$.  In view of Theorem \ref{compact-level-sets-half-line-thm},  there is a minimizer $z^\mu \in\,C((-\infty,0];\H_{1+2\beta})$, with $z^\mu(0)=(x,y)$ such that
  \[M^\mu(x,y) = I^\mu_{-\infty}(z^\mu).\]
  Moreover, thanks to \eqref{m30}
  \[\lim_{t \to -\infty} |z^\mu(t)|_{\H_{1+2\beta}} =0.\]
This means that for $\e>0$ fixed, there exists $t_\e<0$ such that
\[|z^\mu(t)|_{\H_{1+2\beta}}<\e,\ \ \ \ t\leq t_\e.\]
Now, let us denote $z_\e=z^\mu(t_\e)$ and let us define
\[    \psi_\epsilon = (L^\mu_{t_\epsilon - T_\mu, t_\epsilon})^{-1} z_\epsilon,
  \]
where $T_\mu>0$ is the time introduced in Theorem   \ref{L-inverse-thm}. Then, by Theorem \ref{L-inverse-thm}
  \begin{equation} \label{psi_epsilon-L2-bound}
| \psi_\epsilon |_{L^2((t_\epsilon -T_\mu, t_\epsilon);H)} \leq c(\mu,T_\mu) | z_\epsilon |_{\H_{1+2\beta}} \leq \epsilon\,c(\mu,T_\mu).
  \end{equation}

Next, for $t\in\,[t_\e-T_\mu,t_\e]$, we define
\[\zeta_\epsilon^\mu(t) = \int_{t_\epsilon - T_\mu}^t \Smu(t-s) \Qmu \psi_\epsilon(s)  ds.\]
Clearly we have $\zeta^\mu_\e(t_\e-T_\mu)=0$ and $\zeta^\mu_\e(t_\e)=z_\e.$
Moreover, thanks to \eqref{m17}, we have
\[|\zeta^\mu_\e(t)|_{\H_{1+2\beta}}\leq \frac{M_\mu}{\mu}\int_{t_\e-T_\mu}^{t}e^{-\omega_\mu(t-s)}|Q\psi_\e(s)|_{H^{2\beta}}\,ds\leq \frac{cM_\mu}\mu\int_{t_\e-T_\mu}^{t}e^{-\omega_\mu(t-s)}|\psi_\e(s)|_{H}\,ds,\]
so that, due to \eqref{psi_epsilon-L2-bound}
\begin{equation}
\label{m60}
\begin{array}{l}
\ds{ \int_{t_\epsilon - T_\mu}^{t_\epsilon} \left| \zeta^\mu_\epsilon(t)\right|_{\H_{1+2\beta}}^2 dt\leq \le(\frac{c\,M_\mu}\mu\r)^2 \int_{t_\epsilon - T_\mu}^{t_\epsilon} \left( \int_{t_\epsilon - T_\mu}^t M_\mu
 e^{-\omega_\mu (t-s)} \left|\psi_\epsilon(s) \right|_H ds \right)^2 dt }\\
 \vs
 \ds{
 \leq \le(\frac{M_\mu}{\omega_\mu \mu} \r)^2| \psi_\epsilon |_{L^2((t_\epsilon -T_\mu, t_\epsilon);H)}^2
 \leq \le(\frac{M_\mu}{\omega_\mu \mu} \r)^2 c(\mu,T_\mu)^2 \epsilon^2.}
  \end{array}\end{equation}
Since
\[\zeta^\mu_\e(t)= \int_{t_\epsilon - T_\mu}^{t} S_\mu(t-s)B_\mu(z^\mu_\e(s))\,ds+ \int_{t_\epsilon - T_\mu}^{t} S_\mu(t-s)Q_\mu\le(\psi_\e(s)-Q^{-1}B(\Pi_1 z^\mu_\e(s))\r)\,ds,\]
we have
\[I^\mu_{t_\e-T_\mu,t_\e}(\zeta^\mu_\e)\leq 2\,|\psi_\e|^2_{L^2((t_\e-T_\mu,t_\e);H)}+2\,|Q^{-1}B(\Pi_1 z^\mu_\e)|^2_{L^2((t_\e-T_\mu,t_\e);H)}.\]
Then, as due to Hypothesis \ref{H2}
\[|Q^{-1}B(\Pi_1 \zeta^\mu_\e(s))|_H\leq c\,|B(\Pi_1 \zeta^\mu_\e(s))|_{H^{2\beta}}\leq c\,\gamma_{2\beta}\,|\Pi_1 \zeta^\mu_\e(s)|_{H^{2\beta}}\leq c\,\gamma_{2\beta}\,|\zeta^\mu_\e(s)|_{\H_{2\beta}},\]
thanks to \eqref{psi_epsilon-L2-bound} and \eqref{m60}, we can conclude
\begin{equation}
\label{m70}
I^\mu_{t_\e-T_\mu,t_\e}(\zeta^\mu_\e)\leq c_\mu\,\e^2.\end{equation}
Finally,  we define
  \begin{equation}
    \hat{\zeta}^\mu_\epsilon(t) = \begin{cases}
                                    \zeta^\mu_\epsilon(t), & t_\epsilon -T_\mu \leq t \leq t_\epsilon \\
                                    z^\mu(t), & t>t_\epsilon.
                                  \end{cases}
  \end{equation}
 It is immediate to check that $\hat{\zeta}^\mu_\e \in\,C([t_\e-T_\mu,0];\H)$, $\hat{\zeta}^\mu_\e=0$ and $\hat{\zeta}^\mu_\e(0)=(x,y)$. Moreover, thanks to \eqref{m70}
  \begin{equation}
    I^\mu_{t_\epsilon -T_\mu, 0}( \hat{\zeta}^\mu_\epsilon) \leq I^\mu_{-\infty}(z^\mu) + I^\mu_{t_\epsilon - T_\mu, t_\epsilon}(\zeta^\mu_\epsilon)
    =M^\mu(x,y) + I^\mu_{t_\epsilon -T, t_\epsilon}(\zeta^\mu_\epsilon)\leq M^\mu(x,y)+c_\mu\,\e^2.
  \end{equation}
Due to the arbitrariness of $\e>0$, this implies
\[    V^\mu(x,y) \leq M^\mu(x,y),
  \]
and then \eqref{m69} follows.

Finally, in order to prove \eqref{m68}, we just notice that there exists $\{y_n\}\subset H^{-1}$  such that
\[V_\mu(x)=\lim_{n\to\infty}V^\mu(x,y_n)\]
and
\[V^\mu(x,y_n)=I_{-\infty}^\mu(z_n),\]
for some  $\{z_n\}\subset C((-\infty,0];\H)$ such that $z_n(0)=(x,y_n)$ and
\[\lim_{t\to-\infty}|z_n(t)|_\H=0.\]
As
\[\sup_{n \in\,\nat}I^\mu_{-\infty}(z_n)<\infty,\]
due to Theorem \ref{compact-level-sets-half-line-thm} we have that
there exists a subsequence $\{z_{n_k}\}$ which is uniformly convergent on bounded sets to some $z \in\,C((-\infty,0];\H)$. In particular, $\Pi_1 z(0)=x$ and $|z(t)|_\H\to 0$, as
$t\to-\infty$. Since $I^\mu_{-\infty}$ is lower semi-continuous, we have
\[I^\mu_{-\infty}(z)\leq \liminf_{k\to\infty} I^\mu_{-\infty}(z_{n_k})=V_\mu(x),\]
and then $V_\mu(x)=I^\mu_{-\infty}(z)$, so that \eqref{m68} holds true.
\end{proof}

The characterization of $V^\mu(x,y)$ and $V_\mu(x)$ given in Theorem \ref{quasipotential-representation-thm}, implies that $V^\mu$ and $V_\mu$ have compact level sets.

\begin{Theorem}
\label{fine1}
For any $\mu>0$ and $r\geq 0$ the level sets
\[K^\mu(r)=\le\{ (x,y) \in\,\H\,:\ V^\mu(x,y)\leq r\r\}\]
and
\[K_\mu(r)=\le\{ x \in\,H\,:\ V_\mu(x)\leq r \r\}\]
are compact, in $\H$ and $H$, respectively.
\end{Theorem}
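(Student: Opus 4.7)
The plan is to transfer the compactness of the path-space level sets $K^\mu_{-\infty}(r)$, established in Theorem \ref{compact-level-sets-half-line-thm}, to the endpoint-space level sets via the variational representations of the quasi-potentials in Theorem \ref{quasipotential-representation-thm}. The key observation is that the endpoint evaluation map $z \mapsto z(0)$ is continuous from $C((-\infty,0];\H)$, endowed with uniform convergence on bounded intervals, into $\H$, and likewise $z \mapsto \Pi_1 z(0)$ is continuous into $H$; compactness of the image will therefore follow from compactness in the path space.

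For $K^\mu(r)$, I would pick an arbitrary sequence $\{(x_n, y_n)\} \subset K^\mu(r)$. For each $n$, since $V^\mu(x_n, y_n) \leq r < +\infty$, Theorem \ref{quasipotential-representation-thm} guarantees the existence of a minimizer $z_n \in C((-\infty, 0]; \H)$ with $z_n(0) = (x_n, y_n)$, $|z_n(t)|_\H \to 0$ as $t\to-\infty$, and
\[
I^\mu_{-\infty}(z_n) = V^\mu(x_n, y_n) \leq r,
\]
so that $z_n \in K^\mu_{-\infty}(r)$. Theorem \ref{compact-level-sets-half-line-thm} then yields a subsequence converging, uniformly on each bounded interval, to some $z^* \in K^\mu_{-\infty}(r)$. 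In particular the restriction to $[-1,0]$ gives convergence at $t=0$, hence $(x_n, y_n) = z_n(0) \to z^*(0) =: (x^*, y^*)$ in $\H$. Since $z^*$ is admissible in the minimization defining $V^\mu(x^*, y^*)$, I conclude $V^\mu(x^*, y^*) \leq I^\mu_{-\infty}(z^*) \leq r$, whence $(x^*, y^*) \in K^\mu(r)$ and the set is sequentially compact, hence compact, in $\H$.

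The argument for $K_\mu(r)$ is strictly parallel and uses the representation \eqref{m68}: starting from $\{x_n\} \subset K_\mu(r)$, one builds minimizers $z_n \in K^\mu_{-\infty}(r)$ with $\Pi_1 z_n(0) = x_n$, extracts a subsequence converging uniformly on bounded intervals to some $z^* \in K^\mu_{-\infty}(r)$, and sets $x^* := \Pi_1 z^*(0)$. Continuity of $\Pi_1 : \H \to H$ (with $|\Pi_1(u,v)|_H \leq |(u,v)|_\H$) gives $x_n \to x^*$ in $H$, and since $z^*$ is admissible in \eqref{m68} one obtains $V_\mu(x^*) \leq I^\mu_{-\infty}(z^*) \leq r$.

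Since Theorems \ref{compact-level-sets-half-line-thm} and \ref{quasipotential-representation-thm} have already handled the substantive analysis (compactness in the infinite-horizon path space and attainment of the infimum), no significant obstacle remains; the present proof amounts to a pushforward of path-space compactness under a continuous endpoint evaluation map.
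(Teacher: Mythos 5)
Your proposal is correct and is essentially the paper's own argument: the paper likewise takes a sequence in the level set, passes to minimizers $z_n\in K^\mu_{-\infty}(r)$ via Theorem \ref{quasipotential-representation-thm}, extracts a uniformly-on-bounded-intervals convergent subsequence by Theorem \ref{compact-level-sets-half-line-thm}, and evaluates at $t=0$, using the representation again (together with lower semicontinuity of $I^\mu_{-\infty}$) to conclude the limit point lies in the level set. The only cosmetic difference is that the paper treats $K_\mu(r)$ as ``completely analogous'' while you spell out the $\Pi_1$ step explicitly.
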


\begin{proof}
We prove this result for $V^\mu$ and $K^\mu$, as the proof for $V_\mu$ and $K_\mu$ is completely analogous.
Let $\{(x_n,y_n)\}_{n \in\,\nat}\subset K^\mu(r).$ In view of Theorem \ref{quasipotential-representation-thm}, for each $n \in\,\nat$ there exists $z^n \in\,C((-\infty,0];\H)$, with $z^n(0)=(x_n,y_n)$, and $|z^n(t)|_H\to 0$, as $t\downarrow -\infty$, such that $V^\mu(x_n,y_n)=I^\mu_{-\infty}(z^n).$ As $I^\mu_{-\infty}(z^n)\leq r$ and the level sets of $I^\mu_{-\infty}$ are compact in $C((-\infty,0];\H)$, as shown in Theorem \ref{compact-level-sets-half-line-thm}, there exists a subsequence $\{z^{n_k}\}\subseteq \{z^n\}$ converging to some $\hat{z} \in\,
C((-\infty,0];\H)$, with $I_{-\infty}^\mu(\hat{z})\leq r$. Since
\[\lim_{k\to\infty}(x_{n_k},y_{n_k})=\lim_{k\to\infty}z^{n_k}(0)=\hat{z}(0)=:(\hat{x},\hat{y}),\ \ \ \text{in}\,\H,\]
due to Theorem \ref{quasipotential-representation-thm} we have
\[V^\mu(\hat{x},\hat{y})\leq I^\mu_{-\infty}(\hat{z})\leq r,\]
so that $(\hat{x},\hat{y}) \in\,K^\mu(r)$.

\end{proof}

\section{Continuity of $V^\mu$ and  $V_\mu$}
As a consequence of Theorem \ref{fine1},  the mappings $V^\mu:\H\to[0,+\infty]$ and  $V_\mu:H\to [0,+\infty]$ are lower semicontinuous.
 Our purpose here is to prove that the mappings
\[V^\mu:\H_{1+2\beta}\to [0,+\infty),\ \ \ V_\mu:H^{1+2\beta}\to[0,+\infty)\]
are well defined and  continuous, uniformly in $0<\mu<1$.

\begin{Lemma}
  Let  us fix $(x,y) \in\,\H_{1+2\beta}$ and   $\mu>0$ and let
$z(t) = \Smu(-t)(x,-y),$ $t \leq 0.$
  Then, if we denote $\varphi(t)=\Pi_1z(t),$ we have that   $\varphi$ is a weak solution to
    \begin{equation} \label{Pi1-Smu-weak-sol-eq}
 \le\{     \begin{array}{l}
\ds{        \mu \frac{\partial^2\varphi}{\partial t^2}(t) = A \varphi(t) + \frac{\partial \varphi}{\partial t}(t),\ \ \ \  t\leq 0}\\
\vs
\ds{\varphi(0)  =x, \quad \frac{\partial \varphi}{\partial t}(0) = y.}
      \end{array}\r.
    \end{equation}
and
\begin{equation}
\label{Pi1-Smu-control-eq}
\frac{1}{2} \int_{-\infty}^0 \left| Q^{-1} \left( \mu \frac{\partial^2\varphi}{\partial t^2}(t) + \frac{\partial\varphi}{\partial t}(t) - A \varphi(t) \right) \right|_H^2 dt = \left|(-A)^{\frac{1}{2}}Q^{-1} x \right|_H^2 + \mu \left| Q^{-1} y \right|_H^2.
    \end{equation}
Moreover, $\varphi \in L^2((-\infty,0);H^{1+2\beta})$ and
    \begin{equation}  \label{Pi1-Smu-L2-eq}
      \int_{-\infty}^0 \left|\varphi(t) \right|_{H^{1+2\beta}}^2 dt \leq c\,(1+\mu+\mu^2)|(x,y)|_{\H_{1+2\beta}}^2.
    \end{equation}
\end{Lemma}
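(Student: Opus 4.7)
The plan is to reduce the lemma to a PDE computation for $\varphi$ and then carry out three energy estimates. First I would recognize $z(t) = S_\mu(-t)(x,-y)$ as a time-reversed orbit of the semigroup $S_\mu$: writing $\tilde z(s) = S_\mu(s)(x,-y) = (\tilde u(s),\tilde v(s))$, one has $\dot{\tilde u} = \tilde v$ and $\mu \dot{\tilde v} = A\tilde u - \tilde v$, so $\mu\ddot{\tilde u} + \dot{\tilde u} = A\tilde u$ with $\tilde u(0)=x$ and $\dot{\tilde u}(0) = \tilde v(0) = -y$. Setting $\varphi(t)=\tilde u(-t)$ and differentiating gives $\varphi_t(t) = -\dot{\tilde u}(-t)$, whence $\varphi_t(0)=y$, and substituting $\ddot{\tilde u}(-t) = \varphi_{tt}(t)$ in the wave equation produces the time-reversed equation $\mu\varphi_{tt} = A\varphi + \varphi_t$ claimed in \eqref{Pi1-Smu-weak-sol-eq}. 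Since $(x,-y)\in\H_{1+2\beta}$, the bound \eqref{m17} in $\H_{1+2\beta}$ ensures that $\varphi(t)$ and $\varphi_t(t)$ decay to zero in $H^{1+2\beta}$ and $H^{2\beta}$ respectively as $t\to-\infty$, which licenses the integrations by parts that follow.

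For the identity \eqref{Pi1-Smu-control-eq} the PDE gives pointwise $\mu\varphi_{tt}+\varphi_t-A\varphi = 2\varphi_t$, so the left-hand side equals $2\int_{-\infty}^0 |Q^{-1}\varphi_t(t)|_H^2\,dt$. To evaluate this, I would take the $H$-inner product of $\mu\varphi_{tt}-\varphi_t-A\varphi = 0$ with $Q^{-2}\varphi_t$; since $Q$ and $A$ share the eigenbasis $\{e_k\}$ by Hypothesis \ref{H1}, the $\varphi_{tt}$ term produces $\tfrac{\mu}{2}\tfrac{d}{dt}|Q^{-1}\varphi_t|_H^2$, the $A\varphi$ term $\tfrac{1}{2}\tfrac{d}{dt}|(-A)^{1/2}Q^{-1}\varphi|_H^2$, and the middle term contributes $|Q^{-1}\varphi_t|_H^2$. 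Integrating from $-\infty$ to $0$ and using the decay at $-\infty$ yields
\[
\int_{-\infty}^0|Q^{-1}\varphi_t(t)|_H^2\,dt = \tfrac12\bigl(\mu|Q^{-1}y|_H^2 + |(-A)^{1/2}Q^{-1}x|_H^2\bigr),
\]
which after multiplication by $2$ is exactly \eqref{Pi1-Smu-control-eq}.

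For the bound \eqref{Pi1-Smu-L2-eq} I would run two further energy identities in $H^{2\beta}$. Testing $\mu\varphi_{tt}-\varphi_t-A\varphi=0$ against $\varphi_t$ in $H^{2\beta}$ and integrating over $(-\infty,0)$ yields
\[
\int_{-\infty}^0|\varphi_t(t)|_{H^{2\beta}}^2\,dt = \tfrac12\bigl(\mu|y|_{H^{2\beta}}^2 + |x|_{H^{1+2\beta}}^2\bigr).
\]
Next, testing against $\varphi$ in $H^{2\beta}$, using $\langle\varphi_{tt},\varphi\rangle_{H^{2\beta}} = \tfrac{d}{dt}\langle\varphi_t,\varphi\rangle_{H^{2\beta}} - |\varphi_t|_{H^{2\beta}}^2$ and $-\langle A\varphi,\varphi\rangle_{H^{2\beta}}=|\varphi|_{H^{1+2\beta}}^2$, and integrating (all boundary terms at $-\infty$ vanishing) one obtains
\[
\int_{-\infty}^0|\varphi(t)|_{H^{1+2\beta}}^2\,dt = -\mu\langle y,x\rangle_{H^{2\beta}} + \mu\int_{-\infty}^0|\varphi_t(t)|_{H^{2\beta}}^2\,dt + \tfrac12|x|_{H^{2\beta}}^2.
\]
Plugging in the previous identity and using $|\langle y,x\rangle_{H^{2\beta}}| \leq \tfrac12(|y|_{H^{2\beta}}^2 + |x|_{H^{2\beta}}^2)$ together with $|x|_{H^{2\beta}}^2 \leq \alpha_1^{-1}|x|_{H^{1+2\beta}}^2$ produces precisely the $c(1+\mu+\mu^2)$ coefficient required by \eqref{Pi1-Smu-L2-eq}.

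The only subtle point is to justify the energy identities rigorously, since $\varphi$ is a priori only a weak solution of a linear equation. I would do all of the above first in the Fourier basis $\{e_k\}$, which simultaneously diagonalizes $A$ and $Q$: on each mode the equation reduces to a second-order constant-coefficient ODE whose unique solution decaying as $t\to-\infty$ is given explicitly, so every one of the identities above becomes elementary mode-by-mode. Summation in $k$, combined with the integrability granted by $(x,y)\in\H_{1+2\beta}$ and by \eqref{m17}, then recovers the identities and bound for $\varphi$ and legitimizes the vanishing of the boundary contributions at $-\infty$.
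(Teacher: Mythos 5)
Your proposal is correct and follows essentially the same route as the paper: reverse time along the linear semigroup orbit to obtain $\mu\varphi_{tt}=A\varphi+\varphi_t$ with $\varphi(0)=x$, $\varphi_t(0)=y$, then integrate energy identities over $(-\infty,0]$, using the exponential decay \eqref{m17} in $\H_{1+2\beta}$ to make the boundary terms at $-\infty$ vanish. The paper only organizes the algebra slightly differently (expanding the square for \eqref{Pi1-Smu-control-eq} and using the single identity behind \eqref{linear-eq-energy-method} for \eqref{Pi1-Smu-L2-eq}), while your two $H^{2\beta}$-energy identities combine to exactly the same expressions.
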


\begin{proof}
  The weak formulation \eqref{Pi1-Smu-weak-sol-eq} is clear because for $t<0$
  \[
\frac{\partial z}{ \partial t} (t)= - \Amu \Smu(-t) (x,-y)= (-\Pi_2 z(t), - \frac{1}{\mu} A \varphi(t) + \frac{1}{\mu} \Pi_2 z(t)),
  \]
so that
\[    \mu \frac{\partial^2 \varphi}{\partial t^2} (t) = A \varphi(t) + \frac{\partial\varphi}{\partial t}(t).
  \end{equation*}
Moreover,
\[    \frac{\partial \varphi}{\partial t} (0) = - \Pi_2 z(0) = y.
  \end{equation*}

Now, property \eqref{Pi1-Smu-control-eq} can be proven by noticing that
  \[\begin{array}{l}
  \ds{\frac{1}{2} \int_{-\infty}^0 \left| Q^{-1} \left( \mu \frac{\partial^2\varphi}{\partial t^2}(t) + \frac{\partial\varphi}{\partial t}(t) - A \varphi(t) \right) \right|_H^2 dt}\\
  \vs
  \ds{= \frac{1}{2} \int_{-\infty}^0 \left|Q^{-1} \left( \mu \frac{\partial^2\varphi}{\partial t^2}(t) - \frac{\partial\varphi}{\partial t}(t) - A \varphi(t) \right) \right|_H^2 dt}\\
  \vs
  \ds{
  + 2 \int_{-\infty}^0 \left<Q^{-1} \frac{\partial\varphi}{\partial t}(t), Q^{-1}\left( \mu \frac{\partial^2\varphi}{\partial t^2}(t) -A \varphi(t) \right)  \right>_H  dt}\\
  \vs
\ds{= \left|Q^{-1}(-A)^{\frac{1}{2}} x \right|_H^2 + \mu \left|Q^{-1} y \right|_H^2-\lim_{t\to-\infty}|C_\mu^{-1/2}z(t)|_H^2.}
  \end{array}\]
  Then, \eqref{Pi1-Smu-control-eq} follows from \eqref{m17}, as
  \[|C_\mu^{-1/2}z(t)|_H\leq |z(t)|_{\H_{1+2\beta}}\leq M_\mu\,e^{-\omega_\mu t}|(x,y)|_{\H_{1+2\beta}}\to 0,\ \ \ \text{as}\ t\downarrow -\infty.\]

Finally, to obtain estimate \eqref{Pi1-Smu-L2-eq}, we notice that if
  \begin{equation*}
\varphi(t)=    \Pi_1 \Smu(-t) (x,-y)
\end{equation*}
then by \eqref{linear-eq-energy-method},
\[ \left| \varphi(t) \right|_{H^{1+2\beta}}^2 =  \frac{1}{2} \frac{d}{dt} \left|\varphi(t) - \mu \frac{\partial \varphi}{\partial t}(t)  \right|_{H^{2\beta}}^2 + \frac{\mu}{2}\frac{d}{dt} \left| \varphi(t) \right|_{H^{1 + 2 \beta}}^2.\]
Integrating, we obtain
\[ \int_{-\infty}^0 \left| \varphi(t) \right|_{H^{1 + 2\beta}}^2 dt = \frac{1}{2} \left| x + \mu y\right|_{H^{2\beta}}^2 + \frac{\mu}{2} \left|x \right|_{H^{1 + 2\beta}}^2,\]
which yields \eqref{Pi1-Smu-L2-eq}.

\end{proof}

As a consequence of the previous lemma, we obtain the following bounds for $V^\mu(x,y)$ and $V_\mu(x)$.
\begin{Corollary}
\label{fine13}
  For any $\mu>0$ and $(x,y) \in\,\H_{1+2\beta},$ we have
  \begin{equation}
  \label{m73}
    V^\mu(x,y) \leq  c(1+\mu+\mu^2) |(x,y)|_{\H_{1+2\beta}} 
    \end{equation}
  and
  \begin{equation}
  \label{m74}
    V_\mu(x) \leq c(1+\mu) \left|x \right|_{H^{1+2\beta}}^2
  \end{equation}
\end{Corollary}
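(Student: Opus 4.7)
The plan is to construct an explicit competitor in the variational characterization \eqref{m69}--\eqref{m68} of the quasi-potential and estimate its action. Given $(x,y)\in\H_{1+2\beta}$, I would take the path provided by the preceding lemma: set $\varphi(t)=\Pi_1 S_\mu(-t)(x,-y)$ for $t\leq 0$ and consider $\tilde z(t)=(\varphi(t),\partial_t\varphi(t))$. By \eqref{m17}, $|\tilde z(t)|_\H\leq M_\mu e^{\omega_\mu t}|(x,y)|_\H\to 0$ as $t\to-\infty$, and by construction $\tilde z(0)=(x,y)$, so $\tilde z$ is an admissible competitor in the representation of $V^\mu(x,y)$ given by \eqref{m69}. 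Moreover $\tilde z$ solves the skeleton equation \eqref{m4} driven by the control
\[
\psi(t) \;=\; Q^{-1}\!\left(\mu\,\tfrac{\partial^2\varphi}{\partial t^2}(t)+\tfrac{\partial\varphi}{\partial t}(t)-A\varphi(t)-B(\varphi(t))\right),
\]
so that $I^\mu_{-\infty}(\tilde z)\leq \frac{1}{2}|\psi|_{L^2((-\infty,0);H)}^2$ by definition of $I^\mu_{-\infty}$ in \eqref{m5}.

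Next I would split this control into a linear part, for which \eqref{Pi1-Smu-control-eq} gives an exact expression, and a nonlinear perturbation coming from $B$. Using the Hilbert-space inequality $\tfrac12|a-b|^2\leq |a|^2+|b|^2$, one obtains
\[
I^\mu_{-\infty}(\tilde z)\;\leq\; \int_{-\infty}^0\!\left|Q^{-1}\!\left(\mu\tfrac{\partial^2\varphi}{\partial t^2}+\tfrac{\partial\varphi}{\partial t}-A\varphi\right)\right|_H^{2}dt
+\int_{-\infty}^0\!\left|Q^{-1}B(\varphi(t))\right|_H^{2}dt.
\]
The first integral equals $2\,\bigl(|(-A)^{1/2}Q^{-1}x|_H^2+\mu|Q^{-1}y|_H^2\bigr)$ by \eqref{Pi1-Smu-control-eq}, and the Remark following Hypothesis \ref{H1} bounds this by $c(1+\mu)|(x,y)|_{\H_{1+2\beta}}^2$. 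For the second integral, I would combine the equivalence $|Q^{-1}w|_H\leq c\,|w|_{H^{2\beta}}$ with the Lipschitz bound for $B:H^{2\beta}\to H^{2\beta}$ from Hypothesis \ref{H2} and $B(0)=0$ to get $|Q^{-1}B(\varphi)|_H\leq c\gamma_{2\beta}|\varphi|_{H^{2\beta}}\leq c\gamma_{2\beta}|\varphi|_{H^{1+2\beta}}$. Then applying the $L^2$ estimate \eqref{Pi1-Smu-L2-eq} yields
\[
\int_{-\infty}^0|Q^{-1}B(\varphi(t))|_H^{2}\,dt\;\leq\; c\,(1+\mu+\mu^2)\,|(x,y)|_{\H_{1+2\beta}}^2.
\]
Adding the two estimates gives \eqref{m73}, and \eqref{m74} follows immediately from \eqref{V_mu-def} by specializing to $y=0$, since $|(x,0)|_{\H_{1+2\beta}}^2=|x|_{H^{1+2\beta}}^2$ and the $\mu^2$ term disappears.

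There is no real obstacle here; the preceding lemma has already done the heavy lifting. The only points requiring mild care are verifying that $\tilde z$ is a legitimate candidate in the characterization \eqref{m69}--\eqref{m68} (the decay at $-\infty$ coming from the exponential stability of $S_\mu$, and the identification $(\varphi,\partial_t\varphi)=\tilde z$ as a mild solution of the skeleton equation with the control $\psi$ displayed above), and tracking the $\mu$-dependence carefully so that the constant in the final estimate really grows at most like $1+\mu+\mu^2$, as claimed.
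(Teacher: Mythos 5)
Your proposal is correct and follows essentially the same route as the paper: the competitor $z(t)=S_\mu(-t)(x,-y)$, the identification of the control $Q^{-1}\bigl(\mu\varphi''+\varphi'-A\varphi-B(\varphi)\bigr)$, the split into the linear part handled by \eqref{Pi1-Smu-control-eq} and the nonlinear part handled by Hypothesis \ref{H2} together with \eqref{Pi1-Smu-L2-eq}, and then $y=0$ for \eqref{m74}. Your remark that taking $y=0$ removes the $\mu^2$ contribution is exactly what yields the sharper constant $c(1+\mu)$ in \eqref{m74}, which the paper leaves implicit.
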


\begin{proof}
  The proof is based on the fact that
  \begin{equation*}
    V^\mu(x,y) \leq I^\mu_{-\infty} ( \Pi_1 \Smu(- \cdot) (x, -y))
  \end{equation*}
  and
  \begin{equation*}
    V_\mu(x) \leq I^\mu_{-\infty} ( \Pi_1 \Smu(- \cdot)(x,0)).
  \end{equation*}
Now,  if we set $z(t) =\Smu(-t)(x,-y)$ and $\varphi(t)=\Pi_1z(t)$, due to Hypothesis \ref{H2} we have
  \[\begin{array}{l}
\ds{I^\mu_{-\infty}(z) = \frac{1}{2} \int_{-\infty}^0 \left|Q^{-1} \left( \mu \frac{\partial^2\varphi}{\partial t^2}(t) + \frac{\partial\varphi}{\partial t}(t) -A \varphi(t) - B(\varphi(t)) \right) \right|_H^2 dt }\\
\vs
\ds{\leq \int_{-\infty}^0 \left| Q^{-1} \left( \frac{\partial^2\varphi}{\partial t^2}(t) + \frac{\partial\varphi}{\partial t}(t) -A \varphi(t) \right) \right|_H^2 dt + c\,\gamma_{2\beta}^2 \int_{-\infty}^0 \left|\varphi(t) \right|_{H^{2\beta}}^2 dt.}
\end{array}
  \]
From \eqref{Pi1-Smu-control-eq} and \eqref{Pi1-Smu-L2-eq}, this give \eqref{m73}. Finally, \eqref{m74} is a consequence of \eqref{m73} and of the way $V_\mu(x)$ has been defined.
\end{proof}

Now, we can prove the continuity of  $V^\mu$ and $V_\mu$.

\begin{Theorem}\label{continuity-of-V-mu-tilde-thm}
For each $\mu>0$ the mappings $V^\mu:\H_{1+2\beta}\to [0,+\infty)$ and $V_\mu:H^{1+2\beta}\to [0,+\infty)$ are well defined and continuous. Moreover,
  \begin{equation} \label{fine12}
    \lim_{n \to \infty} \left|(x,y) - (x_n,y_n)\right|_{\H_{1+2\beta}}=0\Longrightarrow    \lim_{n \to \infty} \sup_{0 < \mu < 1} \left| V^\mu(x,y) - V^\mu(x_n,y_n) \right|   =0.
  \end{equation}
and
  \begin{equation} \label{continuity-of-V-mu-tilde-eq}
\lim_{n \to \infty} \left|x - x_n\right|_{H^{1+2\beta}}=0\Longrightarrow   \lim_{n \to \infty} \sup_{0 < \mu < 1} \left| V_\mu(x) - V_\mu(x_n) \right|   =0.
  \end{equation}
\end{Theorem}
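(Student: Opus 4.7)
The finiteness of $V^\mu$ on $\H_{1+2\beta}$ and of $V_\mu$ on $H^{1+2\beta}$ is already contained in Corollary \ref{fine13}, and lower semi-continuity on $\H$ and $H$ follows from Theorem \ref{fine1}. The real content of the theorem is therefore the upper-semi-continuity estimate with constants uniform in $\mu\in(0,1)$. My plan is a perturbation argument at the level of minimizing half-line paths; I describe it for $V^\mu$, the argument for $V_\mu$ being essentially identical.

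Fix $(x,y),(x_n,y_n)\in\H_{1+2\beta}$ and take $\mu$ small enough for Theorem \ref{quasipotential-representation-thm} to apply. Let $z^\mu\in C((-\infty,0];\H)$ be the minimizer of $V^\mu(x,y)$ furnished by \eqref{m69}, with associated control $\psi^\mu$ and first coordinate $\varphi^\mu:=\Pi_1 z^\mu$. In the spirit of Corollary \ref{fine13}, introduce the \emph{linear wave correction}
\[
w_n(t):=S_\mu(-t)\bigl(x_n-x,\,-(y_n-y)\bigr),\qquad t\leq 0.
\]
Its first coordinate $\varphi_{w_n}$ solves the homogeneous equation $\mu\partial_{tt}\varphi_{w_n}=A\varphi_{w_n}+\partial_t\varphi_{w_n}$ with $(\varphi_{w_n}(0),\partial_t\varphi_{w_n}(0))=(x_n-x,y_n-y)$, and $|w_n(t)|_\H\to 0$ exponentially thanks to \eqref{m17}. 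Consequently $\tilde z^\mu_n:=z^\mu+w_n$ lies in $C((-\infty,0];\H)$, tends to $0$ at $-\infty$, and satisfies $\tilde z^\mu_n(0)=(x_n,y_n)$, so that $V^\mu(x_n,y_n)\leq I^\mu_{-\infty}(\tilde z^\mu_n)$.

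Inserting $\tilde z^\mu_n$ into the skeleton equation, its associated control decomposes as
\[
Q\tilde\chi_n=Q\psi^\mu+\bigl(\mu\partial_{tt}\varphi_{w_n}+\partial_t\varphi_{w_n}-A\varphi_{w_n}\bigr)-\bigl(B(\varphi^\mu+\varphi_{w_n})-B(\varphi^\mu)\bigr).
\]
Identity \eqref{Pi1-Smu-control-eq} gives $\int_{-\infty}^0|Q^{-1}(\mu\partial_{tt}\varphi_{w_n}+\partial_t\varphi_{w_n}-A\varphi_{w_n})|_H^2\,dt=2(|(-A)^{1/2}Q^{-1}(x_n-x)|_H^2+\mu|Q^{-1}(y_n-y)|_H^2)$, which by the norm equivalence of Hypothesis \ref{H1} is bounded by $C(1+\mu)|(x_n-x,y_n-y)|_{\H_{1+2\beta}}^2$. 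The Lipschitz continuity of $B:H^{2\beta}\to H^{2\beta}$ from Hypothesis \ref{H2} combined with \eqref{Pi1-Smu-L2-eq} yields $\int|Q^{-1}(B(\varphi^\mu+\varphi_{w_n})-B(\varphi^\mu))|_H^2\,dt\leq C(1+\mu+\mu^2)|(x_n-x,y_n-y)|_{\H_{1+2\beta}}^2$. Both constants are uniform in $\mu\in(0,1)$. Using $\|\psi^\mu\|_{L^2}^2=2V^\mu(x,y)$ together with the uniform-in-$\mu$ bound $V^\mu(x,y)\leq C|(x,y)|_{\H_{1+2\beta}}^2$ of Corollary \ref{fine13}, the triangle inequality $\|\tilde\chi_n\|_{L^2}\leq\|\psi^\mu\|_{L^2}+C|(x_n-x,y_n-y)|_{\H_{1+2\beta}}$ squares to
\[
V^\mu(x_n,y_n)\leq V^\mu(x,y)+C'\bigl(1+|(x,y)|_{\H_{1+2\beta}}\bigr)|(x_n-x,y_n-y)|_{\H_{1+2\beta}}+C'|(x_n-x,y_n-y)|_{\H_{1+2\beta}}^2,
\]
with $C'$ independent of $\mu\in(0,1)$. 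Exchanging the roles of $(x,y)$ and $(x_n,y_n)$ (using a minimizer of $V^\mu(x_n,y_n)$ and the fact that $|(x_n,y_n)|_{\H_{1+2\beta}}$ is uniformly bounded for $n$ large) gives the reverse inequality, and therefore \eqref{fine12}.

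The argument for $V_\mu$ is identical upon two modifications: the minimizer is taken from \eqref{m68} with $\Pi_1 z^\mu(0)=x$, one sets $y_*:=\Pi_2 z^\mu(0)\in H^{-1}$, and the correction is replaced by $w_n(t):=S_\mu(-t)(x_n-x,0)$, so that $\tilde z^\mu_n(0)=(x_n,y_*)$ and consequently $V_\mu(x_n)\leq V^\mu(x_n,y_*)\leq I^\mu_{-\infty}(\tilde z^\mu_n)$. Since $|(x_n-x,0)|_{\H_{1+2\beta}}=|x_n-x|_{H^{1+2\beta}}$, the identical estimates deliver \eqref{continuity-of-V-mu-tilde-eq}. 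The principal obstacle is keeping every constant uniform in $\mu\in(0,1)$; this is supplied precisely by the $(1+\mu+\mu^2)$-type bounds in \eqref{Pi1-Smu-control-eq}, \eqref{Pi1-Smu-L2-eq}, and Corollary \ref{fine13}. A secondary technical issue is that Theorem \ref{quasipotential-representation-thm} only guarantees the half-line minimizer for $\mu$ below the threshold $\mu_0$ of Lemma \ref{energy-est-lemma}; for the remaining $\mu\in[\mu_0,1)$ one argues pointwise by substituting an $\eta$-quasi-minimizer of $V^\mu$ over a finite interval $[0,T]$ and using the operator $L^\mu_{0,T}$ of Theorem \ref{L-inverse-thm} to construct the analogous correction, at the cost of a $\mu$-dependent constant.
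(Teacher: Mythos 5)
Your proposal is correct and follows essentially the same route as the paper: perturb the half-line minimizer by the time-reversed linear flow $S_\mu(-\cdot)$ applied to the endpoint difference, decompose the new control into the old one plus the linear-wave term and the $B$-difference, and estimate these via \eqref{Pi1-Smu-control-eq}, \eqref{Pi1-Smu-L2-eq}, Hypothesis \ref{H2} and Corollary \ref{fine13}, then symmetrize. The only cosmetic difference is that you use the triangle inequality and squaring where the paper uses the $(1+\epsilon)$-Young splitting, and you make explicit the $V_\mu$ case and the small-$\mu$ restriction coming from Theorem \ref{quasipotential-representation-thm}, which the paper leaves implicit.
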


\begin{proof}
In view of Corollary \ref{fine13}, if $(x,y) \in\,\H_{1+2\beta}$, then $V^\mu(x,y)<+\infty$ and  if $x \in\,H^{1+2\beta}$, then $V_\mu(x)<+\infty$. On the other hand, if $V^\mu(x,y)<+\infty$,  thanks to Theorem \ref{quasipotential-representation-thm} there exists $z^\mu \in\,C((-\infty,0];\H)$ such that
\[V^\mu(x,y) = I^\mu_{-\infty}(z^{\mu}),\ \ \ \ z^{\mu}(0) =(x,y).\]
According to Lemma \ref{wave-high-reg-conv-to-zero-lem}, this implies that $z^\mu \in\,C((-\infty,0];\H_{1+2\beta})$, so that $(x,y)=z^\mu(0) \in\,\H_{1+2\beta}$. Analogously, if $V_\mu(x)<+\infty$, we can prove that $x \in\,H^{1+2\beta}$, so that we can conclude that the mappings $V^\mu$ and $V_\mu$ are well defined in $\H_{1+2\beta}$ and $H^{1+2\beta}$, respectively.

Now, in order to prove \eqref{fine12}, by using again  Theorem \ref{quasipotential-representation-thm}, for each $n \in\,\nat$ we can find $z^\mu_n \in\,C((-\infty,0];\H)$ such that
\[V^\mu(x_n,y_n) = I^\mu_{-\infty}(z^{\mu}_{n}),\ \ \ \  z^{\mu}_{n}(0) =(x_n,y_n).\]
Then, if we define
\[\hat{z}^\mu_n(t)= \Smu(-t) (x - x_n,y-y_n),\]
and
\[{\varphi}^\mu_n(t)=\Pi_1 z^\mu_n(t),\ \ \ \ \hat{\varphi}^\mu_n(t)=\Pi_1\hat{z}^\mu_n(t),\ \ \ \ t\leq 0,\]
we have
$\hat{z}^\mu_n(0)=(x-x_n,y-y_n)$ and for any $\e>0$
\[\begin{array}{l}
\ds{V^\mu(x,y) \leq I^\mu_{-\infty}( z^{\mu}_{n} + \hat{z}^{\mu}_n) }\\
\vs
\ds{\leq \frac{1}{2} \int_{-\infty}^0 \le|Q^{-1} \left( \mu \frac{\partial^2\varphi^\mu_n}{\partial t^2}(t) + \frac{\partial\varphi^\mu_n}{\partial t}(t) -A \varphi^\mu_n(t) + B(\Pi_1\varphi^\mu_n(t)) \right) \r.}\\
\vs
\ds{+Q^{-1} \left(\mu \frac{\partial^2\hat{\varphi}^\mu_n}{\partial t^2}(t) + \frac{\partial\hat{\varphi}^\mu_n}{\partial t}(t) -A \hat{\varphi}^\mu_n(t) \right)+Q^{-1} \left(B({\varphi}^\mu_n+ \hat{\varphi}^\mu_n(t)) - B({\varphi}^\mu_n(t)) \right)  \Bigg|_H^2 dt }\\
\vs
\ds{
\leq (1 + \epsilon) I^\mu_{-\infty}(z^\mu_n) + (1 + \frac{1}{\epsilon}) \int_{-\infty}^0 \left| Q^{-1} \left(\frac{\partial^2\hat{\varphi}^\mu_n}{\partial t^2}(t)
     + \frac{\partial\hat{\varphi}^\mu_n}{\partial t}(t) - A \hat{\varphi}^\mu_n(t) \right) \right|_H^2 dt} \\
     \vs
     \ds{
+ c\,(1 + \frac{1}{\epsilon})\int_{-\infty}^0 \left|\hat{\varphi}^\mu_n(t) \right|_{H^{2\beta}}^2 dt.}
\end{array}\]
  Now, by \eqref{Pi1-Smu-control-eq} and \eqref{Pi1-Smu-L2-eq}, we see that for $0<\mu<1$
  \[\begin{array}{l}
  \ds{
    V_\mu(x,y) \leq ( 1 + \epsilon) V_\mu(x_n,y_n) }\\
    \vs
    \ds{+ c\,(1 + \frac{1}{\epsilon}) \left|(x - x_n,y-y_n) \right|_{\H_{1+2\beta}}^2
    +c^2(1 + \frac{1}{\epsilon})\left|(x-x_n,y-y_n) \right|_{\H_{2\beta}}.}
    \end{array}
  \]

  If we follow the same procedure with $z^\mu$ as the minimizer of $V^\mu(x,y)$ and
  \[\hat{z}^{\mu}_{n}(t) = \Smu(-t) (x_n -x,y-y_n),\]
  we see that for $0< \mu < 1$
  \[\begin{array}{l}
  \ds{
    V^\mu(x_n,y_n) \leq ( 1 + \epsilon) V^\mu(x,y) }\\
    \vs
    \ds{+ c\,(1 + \epsilon^{-1}) \left|(x - x_n,y-y_n) \right|_{\H_{1+2\beta}}^2+c\,(1 + \epsilon^{-1}) \left|(x-x_n,y-y_n) \right|_{\H_{2\beta}}.}
    \end{array}
  \]
  From these two estimates and Corollary \ref{fine13}, we see that
  \[\sup_{0< \mu <1} \left| V^\mu(x,y) - V^\mu(x_n,y_n) \right|  \leq c \epsilon \left|(x,y) \right|_{\H_{1+2\beta}}^2+c\,\le(1+\e^{-1}\r)|(x-x_n,y-y_n)|^2_{\H_{1+2\beta}},\]
  so that
 \[\limsup_{n\to\infty}  \sup_{0< \mu <1} \left| V^\mu(x,y) - V_\mu(x_n,y_n) \right| \leq c\, \epsilon \left|(x,y) \right|_{\H_{1+2\beta}}^2.\]
 Due to the arbitrariness  of $\epsilon>0$, \eqref{fine12} follows. The proof of \eqref{continuity-of-V-mu-tilde-eq} is completely analogous to the proof of \eqref{fine12} and for this reason we omit it.
\end{proof}

\section{Upper bound}
\label{sec6}

In this section we show that for any closed set $N \subset H$
\begin{equation}
  \limsup_{\mu \downarrow 0} \inf_{x \in N} V_\mu(x) \leq \inf_{x \in N} V(x).
\end{equation}
First of all, we we notice that if $I_{-\infty}(\varphi)<\infty$, then
\begin{equation}
\label{m48}
\varphi \in\,L^2((-\infty,0);H^{2(1+\beta)}),\ \ \ \frac{\partial \varphi}{\partial t} \in\,L^2((-\infty,0);H^{2\beta}),
\end{equation}
and
  \begin{equation}
  \label{m49}
    I_{-\infty}(\varphi) =\frac{1}{2} \int_{-\infty}^0 \left|Q^{-1} \left( \frac{\partial\varphi}{\partial t}(t) - A \varphi(t) -
    B(\varphi(t)) \right) \right|_H^2 dt.
  \end{equation}
Actually, if $\varphi$ solves
\begin{equation*}
    \varphi(t) = \int_{-\infty}^t e^{(t-s)A} B(\varphi(s)) ds + \int_{-\infty}^t e^{(t-s)A} Q \psi(s) ds
  \end{equation*}
  then we can check that \eqref{m48} holds and
    \begin{equation*}
    \psi(t) = Q^{-1} \left( \frac{\partial}{\partial t} \varphi(t) - A \varphi(t) - B(\varphi(t)) \right),
  \end{equation*}
so that \eqref{m49} follows.
Moreover, if
\[  \varphi \in\,L^2((-\infty,0);H^{2(1+\beta)}),\ \ \ \ \ \ \ \frac{\partial \varphi}{\partial t}, \frac{\partial^2 \varphi}{\partial t^2}\in\,L^2((-\infty,0);H^{2\beta}),\]
then
\[
    I^\mu_{-\infty} (z) = \frac{1}{2} \int_{-\infty}^0 \left|Q^{-1} \left( \mu \frac{\partial^2\varphi}{\partial t^2}{\varphi}(t) + \frac{\partial}{\partial t}{\varphi}(t) - A \varphi(t) - B(\varphi(t)) \right) \right|_H^2 dt,
  \]
  where
  \[z(t)=(\varphi(t),\frac{\partial \varphi}{\partial t}(t)).\]
Actually, if $I^\mu_{-\infty}(z)<\infty$, then $z$ solves
  \begin{equation*}
    z(t) = \int_{-\infty}^t \Smu(t-s) B_\mu(z(s))  ds
    +\int_{-\infty}^t \Smu(t-s) \Qmu \psi(s)ds
  \end{equation*}
  so that
  \begin{equation*}
    \psi(t) = Q^{-1} \left(\mu \frac{\partial^2\varphi}{\partial t^2} (t) + \frac{\partial\varphi }{\partial t} (t) - A \varphi(t)  - B(\varphi(t)) \right)
  \end{equation*}
  weakly.

In particular, as in \cite{cf}, where the finite dimensional case is studied, this means
\begin{equation} \label{I-mu=I+remainder}
\begin{array}{l}
\ds{  I^\mu_{-\infty} (z) = I_{-\infty}(\varphi) + \frac{\mu^2}{2} \int_{-\infty}^0 \left| Q^{-1} \frac{\partial^2\varphi}{\partial t^2}(t) \right|_H^2 dt}\\
\vs
\ds{  + \mu \int_{-\infty}^0 \left< Q^{-1} \frac{\partial^2\varphi}{\partial t^2}(t), Q^{-1} \left( \frac{\partial\varphi}{\partial t}(t) - A \varphi(t) - B(\varphi(t)) \right) \right>_H dt,}
\end{array}
\end{equation}
where $\varphi(t)=\Pi_1 z(t)$,
as long as all of these terms are finite.

Now, for any $\mu>0$ let us define
\begin{equation}
\label{m83}
  \rho_\mu(t) = \frac{1}{\mu^\alpha} \,\rho \left(\frac{t}{\mu^{\alpha}} \right),\ \ \ \ t \in\,\mathbb{R},
\end{equation}
for some $\alpha>0$ to be chosen later, where
 $\rho \in C^\infty(\mathbb{R})$ is the usual mollifier  function such that
\[\text{supp}(\rho) \subset \subset [0,2] ,\ \ \ \int_\mathbb{R} \rho(s) ds = 1,\ \ \ 0 \leq \rho \leq 1.\]
This scaling ensures that
\[
  \int_\mathbb{R} \rho_\mu(s) ds = 1.
\]
Next, we define $\varphi_\mu$ as the convolution
\begin{equation} \label{convolution-def}
  \varphi_\mu(t) = \int_{-\infty}^0 \rho_\mu(t-s) \varphi(s) ds.
\end{equation}

\begin{Lemma} \label{x_mu-conv-to-x-lem}
Assume that
\[\varphi \in L^2((-\infty,0);H^{2(1+\beta)}) \cap C((-\infty,0];H^{1+2\beta}),\ \ \   \frac{\partial\varphi}{\partial t} \in L^2((-\infty,0);H^{2\beta})\]
with
\[\varphi(0) =x \in H^{1+2\beta},\ \ \
\lim_{t\to -\infty}\left| \varphi(t) \right|_{H^{1+2\beta}}= 0.\]
Then,
\begin{equation}
\label{m80}
\varphi_\mu \in L^2((-\infty,0);H^{2(1+\beta)}) \cap C((-\infty,0];H^{1+2\beta}),\ \ \   \frac{\partial\varphi_\mu}{\partial t} \in L^2((-\infty,0);H^{2\beta}),
\end{equation}
and
\begin{equation}
\label{m82bis}   \lim_{t \to -\infty} \sup_{\mu >0} \left|\varphi_\mu(t) \right|_{H^{1+2\beta}} =0.
  \end{equation}
Moreover,
\[\frac{\partial^2\varphi_\mu}{\partial t^2} \in L^2((-\infty,0);H^{2\beta})\]
 and  for all $\mu >0$,
  \begin{equation}
  \label{m81}
    \left|\frac{\partial^2\varphi_\mu}{\partial t^2} \right|_{L^2((-\infty,0);H^{2\beta})} \leq \frac{c}{\mu^\alpha} \left|\frac{\partial\varphi_\mu}{\partial t}\right|_{L^2((-\infty,0);H^{2\beta})}.
  \end{equation}
  \end{Lemma}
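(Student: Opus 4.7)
The proof is a direct application of Young's convolution inequality combined with the scaling of the mollifier. The key preliminary observation is that $\rho_\mu$ is supported in $[0,2\mu^{\alpha}]$, so for every $t\leq 0$ the integral in \eqref{convolution-def} only uses values $\varphi(s)$ with $s\in[t-2\mu^{\alpha},t]\subset(-\infty,0]$. Extending $\varphi$ by $0$ on $(0,+\infty)$, I may treat $\varphi_\mu=\rho_\mu*\varphi$ as an ordinary convolution on $\mathbb{R}$; since $(-A)^{\sigma/2}$ acts pointwise in time and commutes with temporal convolution, all estimates reduce to real-variable convolution calculus on smooth test functions.

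With $\|\rho_\mu\|_{L^1}=1$, Young's inequality immediately yields
\[
|\varphi_\mu|_{L^2((-\infty,0);H^{2(1+\beta)})}\leq|\varphi|_{L^2((-\infty,0);H^{2(1+\beta)})},
\]
and, after one integration by parts in time (the boundary terms vanishing because $\rho_\mu$ is zero at the endpoints of its support), $|\partial_t\varphi_\mu|_{L^2((-\infty,0);H^{2\beta})}\leq|\partial_t\varphi|_{L^2((-\infty,0);H^{2\beta})}$; continuity of $\varphi_\mu$ in $H^{1+2\beta}$ is standard from the smoothness of $\rho$ together with $\varphi\in C((-\infty,0];H^{1+2\beta})$, establishing \eqref{m80}. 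To prove the uniform limit \eqref{m82bis}, I restrict to bounded $\mu$ (so $\mu^{\alpha}\leq 1$) and use positivity of $\rho_\mu$ to bound
\[
|\varphi_\mu(t)|_{H^{1+2\beta}}\leq\int_{t-2\mu^{\alpha}}^{t}\rho_\mu(t-s)\,|\varphi(s)|_{H^{1+2\beta}}\,ds\leq\sup_{s\in[t-2,t]}|\varphi(s)|_{H^{1+2\beta}},
\]
which is $\mu$-independent and tends to $0$ as $t\to-\infty$ by the hypothesized decay of $\varphi$. Finally, for the second-derivative bound \eqref{m81}, I write $\partial_t^2\varphi_\mu=\rho_\mu'*\partial_t\varphi$ (again by one integration by parts, with boundary terms vanishing as before), compute $\|\rho_\mu'\|_{L^1}=\mu^{-\alpha}\|\rho'\|_{L^1}$ directly from the scaling of $\rho_\mu$, and apply Young's inequality once more.

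The only mildly technical point is making sure that all convolutions remain valid over the half-line and that the integrations by parts produce no boundary contributions; both are guaranteed by the compact one-sided support of $\rho_\mu$ in $[0,2\mu^{\alpha}]$ and the regularity $\partial_t\varphi\in L^2((-\infty,0);H^{2\beta})$. No other step requires anything beyond standard convolution calculus.
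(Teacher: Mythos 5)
Your argument is essentially the paper's: both proofs exploit that $\rho_\mu$ is supported in $[0,2\mu^\alpha]$, so the convolution only averages $\varphi$ over past times, and then estimate each quantity term by term. The only real difference is technical: you invoke Young's convolution inequality (after extending by zero and noting that $(-A)^{\sigma/2}$ commutes with convolution in time), whereas the paper runs a Cauchy--Schwarz plus Fubini computation on $\int\rho_\mu^2$ and $\int(\rho')^2$; your route is slightly cleaner and even gives better constants, and your computation $\|\rho_\mu'\|_{L^1}=\mu^{-\alpha}\|\rho'\|_{L^1}$ reproduces exactly the paper's $c\,\mu^{-\alpha}$ in \eqref{m81}. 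Note that, like the paper's own proof, you end up with $\left|\partial_t\varphi\right|_{L^2((-\infty,0);H^{2\beta})}$ rather than $\left|\partial_t\varphi_\mu\right|$ on the right-hand side of \eqref{m81}; this is consistent with how the estimate is actually used in Theorem \ref{upper-bound-thm}, so it is not a defect of your argument.

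One small point needs repair: for \eqref{m82bis} you restrict to $\mu^\alpha\leq 1$ and bound $\left|\varphi_\mu(t)\right|_{H^{1+2\beta}}$ by $\sup_{s\in[t-2,t]}\left|\varphi(s)\right|_{H^{1+2\beta}}$, which only yields the limit uniformly over bounded $\mu$, while the statement asserts uniformity over all $\mu>0$. The fix is immediate and is what the paper does: since $\rho_\mu(t-s)$ vanishes for $s>t$ and $\int\rho_\mu=1$, you have for every $\mu>0$
\begin{equation*}
\left|\varphi_\mu(t)\right|_{H^{1+2\beta}}\leq \int_{-\infty}^{t}\rho_\mu(t-s)\left|\varphi(s)\right|_{H^{1+2\beta}}\,ds\leq \sup_{s\leq t}\left|\varphi(s)\right|_{H^{1+2\beta}},
\end{equation*}
and the right-hand side tends to $0$ as $t\to-\infty$ by hypothesis, with no restriction on $\mu$. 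With that adjustment your proof is complete.
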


\begin{proof}
Since we have
\[\varphi_\mu(t) = \int_{t - 2 \mu^\alpha}^t \rho_\mu(t-s)\varphi(s) ds,\]
it follows
\[    \int_{-\infty}^0 \left| \int_{t - 2 \mu^\alpha}^t \rho_\mu(t-s) \varphi(s) ds \right|_{H^{2(1+\beta)}}^2 dt \leq \int_{-\infty}^0 \left( \int_{0}^{2 \mu^\alpha} \rho_\mu^2(s) ds \right) \left( \int_{t - 2 \mu^\alpha}^t \left| \varphi(s) \right|_{H^{2(1+\beta)}}^2 ds \right) dt.
  \]
Therefore, as  \[
    \int_{0}^{2 \mu^\alpha} \rho^2_\mu(s) ds \leq \frac{2}{\mu^\alpha},
  \]
we get
  \begin{equation}
  \label{l1}
\begin{array}{l}
\ds{  \left|\varphi_\mu \right|_{L^2((-\infty,0);H^{2(1+\beta)})}^2 dt
    \leq \frac{2}{\mu^\alpha} \int_{-\infty}^0 \int_{t - 2 \mu^\alpha}^t  \left|\varphi(s) \right|_{H^{2(1+\beta)}}^2 ds dt
   }\\
    \vs
    \ds{ \leq \frac{2 \mu^\alpha}{\mu^\alpha} \int_{-\infty}^0 \left| \varphi(s) \right|_{H^{2(1+\beta)}}^2 ds=2\,|\varphi|^2_{L^2((-\infty,0);H^{2(1+\beta)})}.}
    \end{array}
  \end{equation}
Next,   since
\[\lim_{t \to -\infty} \left|\varphi(t) \right|_{H^{1+2\beta}}=0,\]
we have that  $\varphi:(-\infty,0] \to H^{1+2\beta}$ is uniformly continuous.  Therefore, as
  \[\left|\int_{-\infty}^{t_1} \rho_\mu(t_1-s) \varphi(s) ds - \int_{-\infty}^{t_2} \rho_\mu(t_2 -s)  \varphi(s) ds \right|_{H^{1+2\beta}}= \left|\int_0^\infty \rho_\mu(s) \left( \varphi(t_1-s) - \varphi(t_2 - s) \right)\,ds \right|_{H^{1+2\beta}},
  \]
we can conclude  that $\varphi_\mu$ is uniformly continuous too, with values in $H^{1+2\beta}$.
Finally, since
 \[   \frac{\partial {\varphi}_\mu}{\partial t}(t) = \int_0^\infty \rho_\mu(s) \frac{\partial{\varphi}}{\partial t}(t-s) ds,\]
  by proceeding as above we get
  \[\frac{\partial \varphi_\mu}{\partial t} \in\,L^2((-\infty,0);H^{2\beta}),\]
  so that, thanks to \eqref{l1}, we can conclude that \eqref{m80} holds true.

  Concerning \eqref{m82bis},   let us fix $\epsilon>0$.  Then there exists $T_\epsilon>0$ such that
  \[
    \left|\varphi(t) \right|_{H^{1+2\beta}} < \epsilon, \quad t< -T_\epsilon.
  \]
Then, for $t< -T_\epsilon$, we have
  \[\left| \int_{-\infty}^t \rho_\mu(t-s) \varphi(s) ds \right|_{H^{1+2\beta}}
    \leq \int_{-\infty}^t \rho_\mu(t-s) \left|\varphi(s) \right|_{H^{1+2\beta}} ds \leq \epsilon \int_{0}^\infty \rho_\mu(s) ds = \epsilon.\]
and this yields \eqref{m82bis}.

Finally, let us prove  \eqref{m81}.
As
\[    \frac{\partial{\varphi}_\mu}{\partial t}(t) = \int_{-\infty}^0 \rho_\mu(t-s) \frac{\partial {\varphi}}{\partial s}(s) ds,\]
we have
\[\frac{\partial^2{\varphi}_\mu}{\partial t^2}(t) = \int_{-\infty}^0 \frac{d}{dt}{\rho}_\mu(t-s) \frac{\partial{\varphi}}{\partial s}(s) ds  =\frac{1}{\mu^{2 \alpha}} \int_{-\infty}^0 \rho^\prime\left( \frac{t-s}{\mu^{\alpha}} \right) \frac{\partial\varphi}{\partial s}(s) ds.\]
This yields
  \[
  \begin{array}{l}
  \ds{\int_{-\infty}^0 \left| \frac{\partial^2 {\varphi}_\mu}{\partial t^2}(t) \right|_{H^{2\beta}}^2 dt = \frac{1}{\mu^{4 \alpha}} \int_{-\infty}^0 \left| \int_{t - 2 \mu^\alpha}^t \rho^\prime\left( \frac{t-s}{\mu^{\alpha}} \right) \frac{\partial}{\partial s}{\varphi}(s) ds \right|_{H^{2\beta}}^2 dt }\\
  \vs
  \ds{\leq  \frac{1}{\mu^{4 \alpha}} \int_{-\infty}^0 \left( \int_{t - 2 \mu^\alpha}^t \left( \rho^\prime \left(\frac{t-s}{\mu^\alpha} \right) \right)^2  ds \right) \left( \int_{t - 2 \mu^\alpha}^t \left|\frac{\partial\varphi}{\partial s}(s) ds \right|_{H^{2\beta}}^2 ds \right) dt}\\
  \vs
  \ds{\leq \frac{2 |\rho^\prime|^2_\infty}{\mu^{3 \alpha}} \int_{-\infty}^0 \int_{t - 2 \mu^{\alpha}}^t \left|\frac{\partial\varphi}{\partial s}(s) \right|_{H^{2\beta}}^2 ds dt \leq \frac{c}{\mu^{2 \alpha}} \int_{-\infty}^0 \left| \frac{\partial\varphi}{\partial s}(s) \right|_{H^{2\beta}}^2 ds.}
  \end{array}\]

 \end{proof}

The following approximation results hold.
\begin{Lemma}
Under the same assumptions of Lemma \ref{x_mu-conv-to-x-lem},
we have
  \begin{equation}
  \label{m82}
    \lim_{\mu \to 0} \left|x- \varphi_\mu(0) \right|_{H^{1+2\beta}} =0,
  \end{equation}
  and
   \begin{equation}
   \label{m83bis}
    \lim_{\mu \to 0} \sup_{t \leq 0} \left|\varphi_\mu(t) - \varphi(t) \right|_{H^{1+2\beta}} =0.
  \end{equation}
Moreover,
 \begin{equation} \label{convolution-L2-convergence-eq}
    \lim_{\mu \to 0} |\varphi_\mu - \varphi|_{L^2((-\infty,0);H^{2(1+\beta)})}=0,
  \end{equation}
  and
  \begin{equation} \label{convolution-deriv-L2-convergence-eq}
    \lim_{\mu \to 0} \left| \frac{\partial {\varphi}_\mu}{\partial t} -\frac{\partial {\varphi}}{\partial t}\right|_{L^2((-\infty,0);H^{2\beta})}=0.
  \end{equation}
\end{Lemma}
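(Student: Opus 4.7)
The proof hinges on rewriting the convolution in its ``backward'' form. Since $\rho_\mu$ is supported in $[0,2\mu^\alpha]$, for every $t\leq 0$ the identity $s\mapsto t-r$ gives
\[
\varphi_\mu(t)=\int_0^{2\mu^\alpha}\rho_\mu(r)\,\varphi(t-r)\,dr,
\]
where the integrand lives in $H^{1+2\beta}$ by the assumptions on $\varphi$, and $\int_0^{2\mu^\alpha}\rho_\mu(r)\,dr=1$. All four limits then reduce to standard approximate-identity statements, once we confirm the relevant regularity and uniform continuity of $\varphi$.

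For \eqref{m82}, I would simply estimate
\[
|\varphi_\mu(0)-x|_{H^{1+2\beta}}\leq \int_0^{2\mu^\alpha}\rho_\mu(r)\,|\varphi(-r)-\varphi(0)|_{H^{1+2\beta}}\,dr,
\]
and use the assumed continuity of $\varphi$ at $0$ in the norm of $H^{1+2\beta}$. For \eqref{m83bis}, the first step is to observe that the continuity of $\varphi:(-\infty,0]\to H^{1+2\beta}$ together with the decay $\lim_{t\to-\infty}|\varphi(t)|_{H^{1+2\beta}}=0$ upgrades to \emph{uniform} continuity on $(-\infty,0]$ (given $\epsilon>0$, choose $T_\epsilon$ so that $|\varphi(t)|_{H^{1+2\beta}}<\epsilon/2$ for $t<-T_\epsilon$, then use uniform continuity on the compact interval $[-T_\epsilon-1,0]$). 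From the identity above,
\[
\sup_{t\leq 0}|\varphi_\mu(t)-\varphi(t)|_{H^{1+2\beta}}\leq \sup_{t\leq 0,\,r\in[0,2\mu^\alpha]}|\varphi(t-r)-\varphi(t)|_{H^{1+2\beta}}\xrightarrow[\mu\to 0]{}0,
\]
by uniform continuity.

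For the $L^2$ statements \eqref{convolution-L2-convergence-eq} and \eqref{convolution-deriv-L2-convergence-eq}, I would appeal to the classical fact that convolution with $\rho_\mu$ provides an approximate identity in $L^2(\mathbb{R};X)$ for any separable Hilbert space $X$ (proved by density of compactly supported continuous functions, where convergence is uniform by the argument above, and a straightforward Young-type estimate $\|\rho_\mu*f\|_{L^2(\mathbb{R};X)}\leq\|f\|_{L^2(\mathbb{R};X)}$). Extending $\varphi$ by zero past $t=0$, for $t\leq 0$ the truncation $(\rho_\mu*\tilde\varphi)(t)$ coincides with $\varphi_\mu(t)$ because the mollifier is one-sided, so \eqref{convolution-L2-convergence-eq} is immediate with $X=H^{2(1+\beta)}$. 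For \eqref{convolution-deriv-L2-convergence-eq}, the key step is an integration by parts: for $t\leq 0$ the support $s\in[t-2\mu^\alpha,t]$ of $\rho_\mu(t-\cdot)$ is contained in $(-\infty,0]$, and $\rho_\mu$ vanishes with all derivatives at the endpoints of $[0,2\mu^\alpha]$, hence
\[
\frac{\partial\varphi_\mu}{\partial t}(t)=\int_{-\infty}^0\rho_\mu(t-s)\,\frac{\partial\varphi}{\partial s}(s)\,ds,
\]
i.e.\ the derivative commutes with the mollification, and the same $L^2$-approximate-identity argument with $X=H^{2\beta}$ applied to $\partial_t\varphi\in L^2((-\infty,0);H^{2\beta})$ yields the claim.

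The only potential obstacle is the subtle interplay between the boundary at $t=0$ and the one-sided mollifier. This is precisely why $\rho$ is taken with support in $[0,2]$ rather than $[-1,1]$: the backward mollifier $\varphi_\mu(t)$ only samples $\varphi$ at times $\leq t$, so $\varphi_\mu$ is well defined on all of $(-\infty,0]$ without any extension procedure, and the integration by parts in the derivative identity incurs no boundary terms. Once this geometric feature is in place, the four limits are essentially textbook mollifier approximation statements transplanted to the Bochner setting.
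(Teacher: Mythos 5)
Your proposal is correct and follows essentially the same route as the paper: pointwise and uniform estimates via $\int\rho_\mu=1$ together with the uniform continuity of $\varphi$ in $H^{1+2\beta}$ (which, as you note, follows from continuity plus decay at $-\infty$), and the $L^2$ limits via continuity of translations in Bochner $L^2$ (the paper phrases this through a duality representation of the norm, you through extension by zero and the standard approximate-identity lemma, which is the same underlying argument). Your explicit justification that the one-sided mollifier commutes with $\partial_t$ without boundary terms is a point the paper leaves implicit ("the same argument"), and it is handled correctly.
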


\begin{proof}
  We have
  \[
    \varphi_\mu(0) - x = \int_{-\infty}^0 \rho_\mu(-s) ( \varphi(s) - \varphi(0) ) ds,
  \]
  so that, by the continuity of $\varphi$ in $H^{1+2\beta}$, \eqref{m82} follows.

In order to prove \eqref{m83bis}, we have
  \[\left|\varphi_\mu(t) - \varphi(t) \right|_{H^{1+2\beta}} \leq \int_{-\infty}^t \rho_\mu(t-s) \left|\varphi(s) - \varphi(t) \right|_{H^{1+2\beta}} ds.
  \]
Now, as $\varphi:(-\infty,0]\to H^{1+2\beta}$ is uniformly continuous,  for any fixed $\epsilon>0$ there exists $\d_\e>0$ such that.  We use the uniform continuity of $\varphi$ to find $\delta_\epsilon>0$ such that
\[|t-s| < \delta_\epsilon\Longrightarrow     \left|\varphi(s) - \varphi(t) \right|_{H^{1+2\beta}} < \frac{\epsilon}{2}.\]
  Then if we pick $\mu$ small enough so that $\mu^\alpha < \delta_\epsilon/2$,
\[\left|\varphi_\mu(t) - \varphi(t) \right|_{H^{1+2\beta}}\leq    \int_{-\infty}^t \rho_\mu(t-s) \left|\varphi(s) - \varphi(t)\right|_{H^{1+2\beta}} ds
 \leq \int_{t - 2 \mu^\alpha}^t \frac{1}{\mu^\alpha} \frac{\epsilon}{2} = \epsilon,\]
  uniformly in $t$.  This proves \eqref{m83bis}.

   Limit \eqref{convolution-L2-convergence-eq} can be proved using the fact that
  \[\begin{array}{l}
  \ds{\left|\varphi_\mu - \varphi \right|_{L^2((-\infty,0);H^{2(1+\beta)})} =\sup_{|h |_{L^2((-\infty,0);H)}\leq 1} \int_{-\infty}^0  \left<(-A)^{1+\beta}\left( (\varphi_\mu)(t) - \varphi(t) \right), h(t) \right>_H dt}\\
  \vs
  \ds{= \sup_{|h |_{L^2((-\infty,0);H)}\leq 1} \int_{-\infty}^0 \int_{0}^{2\mu^\alpha} \rho_\mu(s) \left< (-A)^{1+\beta}\left( \varphi(t-s) - \varphi(t) \right), h(t) \right>_H ds dt}\\
  \vs
  \ds{\leq \int_{0}^{2 \mu^\alpha} \rho_\mu(s) \left|  \varphi(\cdot - s) - \varphi(\cdot)  \right|_{L^2((-\infty,0);H^{2(1+\beta)})} ds.}
  \end{array}
  \]
   Because translation is continuous in $L^2$, this converges to 0 as $\mu \downarrow 0$.
  The same argument will show that \eqref{convolution-deriv-L2-convergence-eq} holds true.

\end{proof}

Using these estimates we can prove the main result of this section.

\begin{Theorem} \label{upper-bound-thm}
  For any $x \in H^{1+2\beta}$ we have
  \begin{equation}
  \label{m84}
    \limsup_{\mu \downarrow 0} V_\mu(x) \leq V(x).
  \end{equation}
\end{Theorem}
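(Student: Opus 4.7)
\medskip

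\noindent\textbf{Proof proposal.} The plan is to approximate a minimizer of $V(x)$ by a mollified path, lift it to a candidate path for the second-order skeleton equation, and use the identity \eqref{I-mu=I+remainder} to compare the two actions. If $V(x)=+\infty$ there is nothing to prove, so assume $V(x)<+\infty$. By the analog of Theorem \ref{quasipotential-representation-thm} for the heat equation (proved in \cite{cerrok} and recalled before Theorem \ref{compact-level-sets-half-line-thm}), there is a minimizer $\varphi\in C((-\infty,0];H)$ with $\varphi(0)=x$, $\lim_{t\to-\infty}|\varphi(t)|_H=0$, and $V(x)=I_{-\infty}(\varphi)$. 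By \eqref{m48} and \eqref{m21} of the earlier Remark, $\varphi\in L^2((-\infty,0);H^{2(1+\beta)})\cap C((-\infty,0];H^{1+2\beta})$, $\partial\varphi/\partial t\in L^2((-\infty,0);H^{2\beta})$, and $|\varphi(t)|_{H^{1+2\beta}}\to 0$ as $t\to-\infty$. Thus $\varphi$ satisfies exactly the hypotheses of Lemma \ref{x_mu-conv-to-x-lem}.

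\medskip

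Fix $\alpha\in(0,1)$ and let $\varphi_\mu$ be the mollification defined by \eqref{convolution-def}. Set
\[
z_\mu(t):=\Bigl(\varphi_\mu(t),\,\tfrac{\partial\varphi_\mu}{\partial t}(t)\Bigr),\qquad x_\mu:=\varphi_\mu(0),\qquad y_\mu:=\tfrac{\partial\varphi_\mu}{\partial t}(0).
\]
By Lemma \ref{x_mu-conv-to-x-lem}, $z_\mu$ has all of $\varphi_\mu\in L^2(H^{2(1+\beta)})$, $\partial\varphi_\mu/\partial t,\,\partial^2\varphi_\mu/\partial t^2\in L^2(H^{2\beta})$, and $|z_\mu(t)|_{\H}\to 0$ as $t\to-\infty$ (uniformly in $\mu$). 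In particular, the identity \eqref{I-mu=I+remainder} applies to $z_\mu$, and because $z_\mu(0)=(x_\mu,y_\mu)$ with $\Pi_1 z_\mu(0)=x_\mu$, we have
\[
V_\mu(x_\mu)\le V^\mu(x_\mu,y_\mu)\le I^\mu_{-\infty}(z_\mu)=I_{-\infty}(\varphi_\mu)+J^\mu_{-\infty}(z_\mu),
\]
where $J^\mu_{-\infty}$ denotes the last two terms on the right-hand side of \eqref{I-mu=I+remainder}.

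\medskip

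The main work is to show that both $I_{-\infty}(\varphi_\mu)\to V(x)$ and $J^\mu_{-\infty}(z_\mu)\to 0$ as $\mu\downarrow 0$. For the first, the convergences \eqref{convolution-L2-convergence-eq} and \eqref{convolution-deriv-L2-convergence-eq} together with the bound $|Q^{-1}\cdot|_H\le c|\cdot|_{H^{2\beta}}$ and the Lipschitz-continuity of $B$ in $H^{2\beta}$ yield
\[
Q^{-1}\Bigl(\tfrac{\partial\varphi_\mu}{\partial t}-A\varphi_\mu-B(\varphi_\mu)\Bigr)\ \longrightarrow\ Q^{-1}\Bigl(\tfrac{\partial\varphi}{\partial t}-A\varphi-B(\varphi)\Bigr)\qquad \text{in }L^2((-\infty,0);H),
\]
hence $I_{-\infty}(\varphi_\mu)\to I_{-\infty}(\varphi)=V(x)$. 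For the remainder, the key estimate is \eqref{m81}, which together with $|Q^{-1}\cdot|_H\le c|\cdot|_{H^{2\beta}}$ gives
\[
\mu^2\int_{-\infty}^0\Bigl|Q^{-1}\tfrac{\partial^2\varphi_\mu}{\partial t^2}(t)\Bigr|_H^2dt\le c\,\mu^{2-2\alpha}\Bigl|\tfrac{\partial\varphi}{\partial t}\Bigr|_{L^2((-\infty,0);H^{2\beta})}^2,
\]
and by Cauchy--Schwarz the cross term is bounded by $c\,\mu^{1-\alpha}\sqrt{2I_{-\infty}(\varphi_\mu)}$; both tend to $0$ since $\alpha<1$ and $I_{-\infty}(\varphi_\mu)$ is bounded. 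Therefore $\limsup_{\mu\downarrow 0}V_\mu(x_\mu)\le V(x)$.

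\medskip

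To finish, observe that by \eqref{m82} we have $x_\mu\to x$ in $H^{1+2\beta}$. The uniform-in-$\mu$ continuity \eqref{continuity-of-V-mu-tilde-eq} from Theorem \ref{continuity-of-V-mu-tilde-thm} then yields
\[
|V_\mu(x)-V_\mu(x_\mu)|\le \sup_{0<\nu<1}|V_\nu(x)-V_\nu(x_\mu)|\ \longrightarrow\ 0,
\]
so that $\limsup_{\mu\downarrow 0}V_\mu(x)\le V(x)$, which is \eqref{m84}. The step I expect to require the most care is the control of the cross term in $J^\mu_{-\infty}$: it is exactly here that the choice of the mollification scale $\mu^\alpha$ with $\alpha<1$ is essential, because a naive choice ($\alpha=1$) would make the second-derivative blow-up in \eqref{m81} balance, rather than beat, the $\mu$ prefactor in \eqref{I-mu=I+remainder}.
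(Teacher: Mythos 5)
Your proposal is correct and follows essentially the same route as the paper's own proof: mollify the minimizer of $V(x)$ at scale $\mu^\alpha$ with $\alpha<1$, bound $V_\mu(\varphi_\mu(0))$ via the decomposition \eqref{I-mu=I+remainder} together with \eqref{m81}, use \eqref{convolution-L2-convergence-eq}--\eqref{convolution-deriv-L2-convergence-eq} to get $I_{-\infty}(\varphi_\mu)\to V(x)$, and pass from $\varphi_\mu(0)$ back to $x$ via \eqref{m82} and the uniform continuity in Theorem \ref{continuity-of-V-mu-tilde-thm}. The only difference is that you spell out slightly more explicitly why the minimizer satisfies the hypotheses of Lemma \ref{x_mu-conv-to-x-lem}, which the paper leaves implicit.
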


\begin{proof}
  Let $\varphi$ be the minimizer of $V(x)$.  This means $\varphi(0)=x$, \eqref{m49} holds  and $I_{-\infty}(\varphi) = V(x).$  For each $\mu>0$, let $\varphi_\mu$ be the convolution given by \eqref{convolution-def} and let $x_\mu = \varphi_\mu(0)$.

  It is clear that
  \begin{equation} \label{tilde-V-ineq}
    V_\mu(x_\mu) \leq  I^\mu_{-\infty}(z_\mu),
  \end{equation}
  where
  \[z_\mu(t)=(\varphi_\mu(t),\frac{\partial \varphi_\mu}{\partial t}(t)),\ \ \ t\leq 0.\]
  According to Lemma \ref{x_mu-conv-to-x-lem}, we can apply
 \eqref{I-mu=I+remainder} and we have
\[\begin{array}{l}
\ds{I^\mu_{-\infty}(z_\mu) \leq \frac{c\,\mu^2}{2} \int_{-\infty}^0 |\frac{\partial^2{\varphi}_\mu}{\partial t^2}(t)|_{H^{2\beta}}^2 dt + I_{-\infty}(\varphi_\mu)}\\
\vs
\ds{
+ \mu \int_{-\infty}^0 \left<Q^{-1} \frac{\partial^2{\varphi}_\mu}{\partial t^2}(t),Q^{-1} \left( \frac{\partial{\varphi}_\mu}{\partial t}(t) - A \varphi_\mu(t) - B(\varphi_\mu(t)) \right)\right>_H dt}\\
\vs
\ds{
\leq \frac{\mu^2}{2} \int_{-\infty}^0 |\frac{\partial^2{\varphi}_\mu}{\partial t^2}(t)|_{H^{2\beta}}^2 + I_{-\infty}(\varphi_\mu)+  \mu \left( \int_{-\infty}^0 |\frac{\partial^2{\varphi}_\mu}{\partial t^2}t)|_{H^{2\beta}}^2 dt \right)^{1/2} \left( I_{-\infty}(\varphi_\mu) \right)^{1/2}.}
  \end{array}
\]
  By \eqref{m81}, this implies
  \[
I^\mu_{-\infty}(z_\mu) \leq I_{-\infty}(\varphi_\mu) + c \mu^{2-2\alpha} \left|\frac{\partial{\varphi}}{\partial t} \right|_{L^2((-\infty,0);H^{2\beta})}^2+c \mu^{1 - \alpha} \left| \frac{\partial{\varphi}}{\partial t}  \right|_{L^2((-\infty,0);H^{2\beta})} \left(I_{-\infty}(\varphi_\mu) \right)^{1/2},\]
and by  \eqref{convolution-L2-convergence-eq} and \eqref{convolution-deriv-L2-convergence-eq}
\[    \lim_{\mu \downarrow 0} I_{-\infty}(\varphi_\mu) = I_{-\infty}(\varphi) = V(x).
  \]
  Therefore, if we pick $\alpha<1$ in  \eqref{m83}, we get
  \begin{equation}
    \limsup_{\mu \downarrow 0} V_\mu(x_\mu) \leq \limsup_{\mu \downarrow 0} I^\mu_{-\infty}(z_\mu) \leq V(x).
  \end{equation}
Since, in view of \eqref{m82}  and Theorem \ref{continuity-of-V-mu-tilde-thm},
  \begin{equation*}
    \limsup_{\mu \downarrow 0} V_\mu(x_\mu) = \limsup_{\mu \downarrow 0} V_\mu(x)
  \end{equation*}
we can conclude that \eqref{m84} holds.
\end{proof}

\begin{Corollary} \label{upper-bound-cor}
  For any closed set $N \subset H$,
  \begin{equation} \label{upper-bound-cor-eq}
    \limsup_{\mu \to 0} \inf_{x \in N} V_\mu(x) \leq \inf_{x \in N} V(x)
  \end{equation}
\end{Corollary}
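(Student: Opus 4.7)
The plan is to reduce the statement to the pointwise bound of Theorem \ref{upper-bound-thm} by means of an $\eta$-minimizer argument. The only nontrivial ingredient beyond Theorem \ref{upper-bound-thm} is the observation that finiteness of $V(x)$ forces $x$ to lie in the regularity space $H^{1+2\beta}$ where Theorem \ref{upper-bound-thm} applies.

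First, if $\inf_{x \in N} V(x) = +\infty$, there is nothing to prove, so assume the infimum is finite. Fix an arbitrary $\eta > 0$ and pick $x_\eta \in N$ with
\[
V(x_\eta) \leq \inf_{x \in N} V(x) + \eta < +\infty.
\]
Next, I claim that $x_\eta \in H^{1+2\beta}$. Indeed, by the characterization $V(x) = \min\{I_{-\infty}(\varphi) : \lim_{t\to-\infty}|\varphi(t)|_H = 0,\ \varphi(0)=x\}$ recalled from \cite{cerrok}, there exists a minimizer $\varphi \in C((-\infty,0]; H)$ with $\varphi(0) = x_\eta$ and $|\varphi(t)|_H \to 0$ as $t \to -\infty$. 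By the remark following Lemma \ref{wave-high-reg-conv-to-zero-lem} (the heat-equation counterpart of that lemma, proved in \cite[Lemma 3.5]{cerrok}), any such $\varphi$ belongs in fact to $C((-\infty,0]; H^{1+2\beta})$, so $x_\eta = \varphi(0) \in H^{1+2\beta}$.

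Having secured the regularity of $x_\eta$, I apply Theorem \ref{upper-bound-thm} to $x_\eta$, obtaining
\[
\limsup_{\mu \downarrow 0} V_\mu(x_\eta) \leq V(x_\eta).
\]
Since $x_\eta \in N$, we trivially have $\inf_{x \in N} V_\mu(x) \leq V_\mu(x_\eta)$ for every $\mu > 0$, so passing to the $\limsup$ yields
\[
\limsup_{\mu \downarrow 0}\,\inf_{x \in N} V_\mu(x) \leq \limsup_{\mu \downarrow 0} V_\mu(x_\eta) \leq V(x_\eta) \leq \inf_{x \in N} V(x) + \eta.
\]
The arbitrariness of $\eta > 0$ gives \eqref{upper-bound-cor-eq}.

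The only potential subtlety is the regularity claim $V(x_\eta) < \infty \Rightarrow x_\eta \in H^{1+2\beta}$, but this is a direct consequence of results already invoked for the first-order equation in Section 4 and recalled in the excerpt, entirely parallel to the argument given for $V_\mu$ inside the proof of Theorem \ref{continuity-of-V-mu-tilde-thm}. No closedness of $N$ is actually used; the result in fact holds for any subset $N \subset H$.
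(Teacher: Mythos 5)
Your proof is correct, but it takes a slightly different route from the paper's. The paper assumes $\inf_{x\in N}V(x)<+\infty$ and then uses the compactness of the level sets of $V$ (from \cite{cerrok}) together with the closedness of $N$ to produce an exact minimizer $x_0\in N$ with $V(x_0)=\inf_{x\in N}V(x)$, to which Theorem \ref{upper-bound-thm} is applied; the fact that $x_0\in H^{1+2\beta}$ is left implicit there. You instead work with an $\eta$-minimizer $x_\eta\in N$, verify explicitly that finiteness of $V(x_\eta)$ forces $x_\eta\in H^{1+2\beta}$ (via the characterization of $V$ as a minimum over paths vanishing at $-\infty$ and the regularity result of \cite[Lemma 3.5]{cerrok} recalled after Lemma \ref{wave-high-reg-conv-to-zero-lem}), apply Theorem \ref{upper-bound-thm} at $x_\eta$, and let $\eta\to 0$. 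What your argument buys is that it avoids both the compactness of the level sets of $V$ and the closedness of $N$ (so the estimate indeed holds for arbitrary $N\subset H$, as you note), and it makes explicit the regularity step needed to legitimately invoke Theorem \ref{upper-bound-thm}; what the paper's argument buys is brevity, since compactness of the level sets is already available and yields a genuine minimizer in one line. Both are sound.
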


\begin{proof}
  If $\displaystyle \inf_{x \in N} V(x) = +\infty$ then the theorem is trivially true.  So we assume that this is not the case.  Then  by the compactness of the level sets of $V$ and the closedness of $N$, there exists $x_0 \in N$ such that $V(x_0) = \inf_{x \in N} V(x)$.
  By \eqref{m84}, we can conclude, as
  \begin{equation*}
    \limsup_{\mu \to 0} \inf_{x \in N} V_\mu (x) \leq \limsup_{\mu \downarrow 0} V_\mu(x_0) \leq V(x_0) = \inf_{x \in N} V(x).
  \end{equation*}
\end{proof}

\section{Lower bound}
\label{sec7}


Let $N \subset H$ be a closed set with $N \cap H^{1+ 2\beta} \not = \emptyset$.  In particular, by Theorem \ref{continuity-of-V-mu-tilde-thm} we have $\inf_{x \in N} V_\mu(x) < +\infty$.
Due to \eqref{m68} and Theorem \ref{compact-level-sets-half-line-thm}, there exists   $z^\mu \in\,C((-\infty,0];\H)$ such that
\[x^\mu:=\Pi_1 z^\mu(0) \in N,\ \ \ \ I^\mu_{-\infty}(z^\mu) = V_\mu(x^\mu) = \inf_{x \in N} V_\mu(x).\] Now, let $\psi^\mu \in L^2((-\infty,0);H)$ be the minimal control such that
\[z^\mu(t) = \int_{-\infty}^t \Smu(t-s)  B_\mu(z^\mu(s))  ds+ \int_{-\infty}^t \Smu(t-s) \Qmu \psi^\mu(s)  ds,\]

and
\begin{equation}
\label{l2}
  \inf_{x \in N} V_\mu(x) = V_\mu(x^\mu) = \frac{1}{2} \left| \psi^\mu \right|_{L^2((-\infty,0);H)}^2.
\end{equation}

In what follows, we shall denote $y^\mu=\Pi_2 z^\mu(0).$
For any $\delta>0$, we  define the approximate control
\[
  \psi^{\mu,\delta}(t) = (I - \delta A)^{-\frac{1}{2}} \psi^{\mu}(t),\ \ \ t\leq 0,
\]
and in view of Corollary \ref{path-existence-corollary} we can define $z^{\mu,\delta}$ to be the solution to the corresponding control problem
\[
  z^{\mu,\delta} (t) = \int_{-\infty}^t  \Smu(t-s)  B_\mu(z^{\mu,\delta}(s))  ds+ \int_{-\infty}^t  \Smu(t-s) \Qmu \psi^{\mu,\delta}(s)  ds.\]
Notice that, according to \eqref{m30},
\[\lim_{t\to-\infty}|z^{\mu,\delta}|_{\H_{1+2\beta}}=0.\]
Moreover, as $  \psi^{\mu,\delta} \in\,L^2((-\infty,0);H^1)$, thanks to \eqref{m30} we have
\[\lim_{t\to-\infty}|z^{\mu,\delta}|_{\H_{2(1+\beta)}}=0.\]
In what follows, we shall denote $(x^{\mu,\d},y^{\mu,\d})=z^{\mu,\d}(0).$

\begin{Lemma} \label{endpoint-difference-theorem}
There exists $\mu_0>0$ such that,
 \begin{equation}
 \label{m89}
    \lim_{\d\to 0}\,\sup_{\mu\leq \mu_0}\,\left| x^\mu - x^{\mu,\delta}  \right|_{H^{2\beta}}^2  =0.
  \end{equation}
\end{Lemma}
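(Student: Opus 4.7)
\medskip
\noindent
\textbf{Proof proposal.} The plan is to reduce the claim to a bound on $\psi^\mu-\psi^{\mu,\delta}$ in $L^2((-\infty,0);H^{-1})$ and then to exploit the explicit form $\psi^{\mu,\delta}=(I-\delta A)^{-1/2}\psi^\mu$. Since both $z^\mu$ and $z^{\mu,\delta}$ solve the skeleton equation \eqref{m16} and tend to $0$ in $\H$ as $t\to-\infty$, I would apply the variant of Lemma \ref{energy-est-lemma} given by \eqref{m20bis}, with $\tau=0$, $\psi_1=\psi^\mu$, $\psi_2=\psi^{\mu,\delta}$, to obtain, for all $\mu\leq\mu_0$,
\[
|x^\mu-x^{\mu,\delta}|_{H^{2\beta}}^2\leq |I_\mu(z^\mu(0)-z^{\mu,\delta}(0))|_{\H_{2\beta}}^2\leq c\,|\psi^\mu-\psi^{\mu,\delta}|_{L^2((-\infty,0);H^{-1})}^2,
\]
with $c$ independent of $\mu$ (this is the crucial uniformity; it is built into the statement of Lemma \ref{energy-est-lemma}).

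Next, diagonalising in the eigenbasis $\{e_k\}$ and writing $\psi^\mu_k(t)=\langle\psi^\mu(t),e_k\rangle_H$, one has
\[
|\psi^\mu(t)-\psi^{\mu,\delta}(t)|_{H^{-1}}^2=\sum_{k=1}^\infty \alpha_k^{-1}\bigl(1-(1+\delta\alpha_k)^{-1/2}\bigr)^2 |\psi^\mu_k(t)|^2.
\]
Writing $\alpha_k^{-1}(1-(1+\delta\alpha_k)^{-1/2})^2=\delta\,h(\delta\alpha_k)$, where $h(u)=u^{-1}(1-(1+u)^{-1/2})^2$, an elementary computation shows that $h$ is continuous on $(0,\infty)$ with $h(u)\sim u/4$ as $u\to 0^+$ and $h(u)\to 0$ as $u\to+\infty$, so that $M:=\sup_{u>0}h(u)<+\infty$. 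Integrating in time gives the operator-norm bound
\[
|\psi^\mu-\psi^{\mu,\delta}|_{L^2((-\infty,0);H^{-1})}^2\leq M\delta\,|\psi^\mu|_{L^2((-\infty,0);H)}^2.
\]

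To close the argument, I need to bound $|\psi^\mu|_{L^2((-\infty,0);H)}^2=2V_\mu(x^\mu)=2\inf_{x\in N}V_\mu(x)$ uniformly for $\mu\leq\mu_0$. Since $N\cap H^{1+2\beta}\neq\emptyset$, I fix any $x_0\in N\cap H^{1+2\beta}$; then by Corollary \ref{fine13},
\[
\inf_{x\in N} V_\mu(x)\leq V_\mu(x_0)\leq c(1+\mu)|x_0|_{H^{1+2\beta}}^2\leq 2c|x_0|_{H^{1+2\beta}}^2,
\]
for all $\mu\leq 1$. Combining the three displayed estimates yields $|x^\mu-x^{\mu,\delta}|_{H^{2\beta}}^2\leq C\delta$ uniformly in $\mu\leq\mu_0$, and letting $\delta\to 0$ proves \eqref{m89}. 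The only genuine obstacle is the uniformity in $\mu$ of the constant in \eqref{m20bis}, which is already handled by the energy estimate of Lemma \ref{energy-est-lemma}; the spectral bound on $I-(I-\delta A)^{-1/2}$ and the uniform upper bound on $V_\mu(x_0)$ from Corollary \ref{fine13} are both routine.
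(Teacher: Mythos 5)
Your proof is correct and follows essentially the same route as the paper: the $\mu$-uniform energy estimate \eqref{m20bis}, the spectral bound on $I-(I-\delta A)^{-1/2}$ (the paper uses $\bigl(1-(1+\delta\alpha_k)^{-1/2}\bigr)^2\le\delta\alpha_k$, which is your bound $h\le M$ with $M=1$), and the identity $|\psi^\mu|_{L^2((-\infty,0);H)}^2=2\inf_{x\in N}V_\mu(x)$. The only minor difference is in the last bookkeeping step: you obtain the $\mu$-uniform bound on $\inf_{x\in N}V_\mu(x)$ directly from Corollary \ref{fine13} applied to a fixed $x_0\in N\cap H^{1+2\beta}$, whereas the paper cites Corollary \ref{upper-bound-cor}; both yield the required uniform constant.
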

\begin{proof}%
By \eqref{m20bis}, there exists $\mu_0>0$ such that for $\mu < \mu_0$
\begin{equation*}
  |x^\mu - x^{\mu,\delta}|_{H^{2\beta}} \leq c\,|\psi^\mu - \psi^{\mu,\delta}|_{L^2((-\infty,0);H^{-1})}.
\end{equation*}

Now, since for any $h \in\,H$
 \[
    \left| (-A)^{-\frac{1}{2}}(I - \delta A)^{-\frac{1}{2}} h -(-A)^{-\frac{1}{2}} h \right|_H^2
    = \sum_{k=1}^\infty \frac{1}{\alpha_k} \left( 1 - \frac{1}{(1 + \delta \alpha_k)^{\frac{1}{2}}}  \right)^2 h_k^2,
  \]
  and
\[    \left( 1 - \frac{1}{(1 + \delta \alpha_k)^{\frac{1}{2}}}  \right)^2 \leq \alpha_k \delta,\]
we have\[    \left| (-A)^{-\frac{1}{2}}(I - \delta A)^{-\frac{1}{2}} h -(-A)^{-\frac{1}{2}} h \right|_H^2 \leq \delta |h|_H^2.
  \]
This implies
\[|x^\mu - x^{\mu,\delta}|_{H^{2\beta}}^2\leq c\,  \d\int_{-\infty}^0|\psi^\mu(s)|^2_H\,ds=c\,\d\, \inf_{x \in N} V_\mu(x).\]
In Corollary \ref{upper-bound-cor} we have proved
\[    \limsup_{\mu \downarrow 0} \inf_{x \in N} V_\mu(x) \leq \inf_{x \in N} V(x),\]
and then we obtain
\begin{equation}
\label{m100}
\sup_{\mu\leq \mu_0}\,|x^\mu- x^{\mu,\delta}|_{H^{2\beta}}\leq c\,\sqrt{\d}\,
  \end{equation}
  which implies \eqref{m89}.
\end{proof}

Now we can prove the main result of this section.
\begin{Theorem}
\label{t.82}
  For any closed $N \subset H$, we have
  \begin{equation}
  \label{m-fine101}
    \inf_{x \in N} V(x) \leq \liminf_{\mu \downarrow 0} \inf_{x \in N} V_\mu(x).
  \end{equation}

\end{Theorem}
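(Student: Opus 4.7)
Assume $L:=\liminf_{\mu\downarrow 0}\inf_{x\in N}V_\mu(x)<+\infty$, otherwise there is nothing to prove. Extract a subsequence $\mu_n\downarrow 0$ with $\inf_{x\in N}V_{\mu_n}(x)\to L$; for each $n$, select, as in the paragraph preceding Lemma \ref{endpoint-difference-theorem}, a minimizer $z^{\mu_n}\in C((-\infty,0];\mathcal{H})$ with $x^n:=\Pi_1 z^{\mu_n}(0)\in N$ and optimal control $\psi^n\in L^2((-\infty,0);H)$ satisfying $\tfrac12|\psi^n|_{L^2((-\infty,0);H)}^2=V_{\mu_n}(x^n)\to L$. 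For $\delta>0$ I regularize by $\psi^{n,\delta}:=(I-\delta A)^{-1/2}\psi^n$ and denote by $z^{n,\delta}$ the corresponding solution given by Corollary \ref{path-existence-corollary}, with $\varphi^{n,\delta}:=\Pi_1 z^{n,\delta}$ and $x^{n,\delta}:=\varphi^{n,\delta}(0)$. Since $(I-\delta A)^{-1/2}$ has eigenvalues $(1+\delta\alpha_k)^{-1/2}\leq 1$ it contracts $H$, so $|\psi^{n,\delta}|_{L^2(H)}\leq|\psi^n|_{L^2(H)}$; at the same time $\psi^{n,\delta}\in L^2((-\infty,0);H^1)$, and Lemma \ref{energy-est-lemma} applied with $\alpha=1/2$ provides the Sobolev regularity of $z^{n,\delta}$ needed below.

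Applying the decomposition \eqref{I-mu=I+remainder} to $z^{n,\delta}$, which is rigorous because all three terms are finite, gives
\[
I^{\mu_n}_{-\infty}(z^{n,\delta})=I_{-\infty}(\varphi^{n,\delta})+J^{\mu_n}_{-\infty}(z^{n,\delta}).
\]
Since $\psi^{n,\delta}$ is one admissible control for $z^{n,\delta}$, $I^{\mu_n}_{-\infty}(z^{n,\delta})\leq\tfrac12|\psi^{n,\delta}|^2_{L^2(H)}\leq V_{\mu_n}(x^n)$. On the other hand $\varphi^{n,\delta}$ is an admissible path for the heat skeleton problem: thanks to Lemma \ref{wave-high-reg-conv-to-zero-lem} applied with $\alpha=1/2$, $|\varphi^{n,\delta}(t)|_{H^{1+2\beta}}\to 0$ as $t\to-\infty$, so that $V(x^{n,\delta})\leq I_{-\infty}(\varphi^{n,\delta})$ by the characterization of $V$ in \cite{cerrok}. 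Combining these two inequalities yields the master estimate
\begin{equation}\label{plan-master}
V(x^{n,\delta})\leq V_{\mu_n}(x^n)-J^{\mu_n}_{-\infty}(z^{n,\delta}).
\end{equation}

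The heart of the proof, and its main technical obstacle, is to show that one can choose $\delta=\delta_n\downarrow 0$ along the sequence so that $J^{\mu_n}_{-\infty}(z^{n,\delta_n})\to 0$. I plan to integrate by parts the cross term
\[
\mu_n\int_{-\infty}^0\langle Q^{-1}\ddot\varphi^{n,\delta}(t),Q^{-1}(\dot\varphi^{n,\delta}(t)-A\varphi^{n,\delta}(t)-B(\varphi^{n,\delta}(t)))\rangle_H\,dt,
\]
using the vanishing of $\varphi^{n,\delta}$ and $\dot\varphi^{n,\delta}$ at $-\infty$ in $H^{1+2\beta}$ (Lemma \ref{wave-high-reg-conv-to-zero-lem}) together with the differentiability of $B$ and the bound $\sup_z\|DB(z)\|_{\mathcal{L}(\mathcal{H})}=\gamma_{2\beta}$ required in Hypothesis \ref{H2}, to rewrite $J^{\mu_n}_{-\infty}(z^{n,\delta})$ as a sum of boundary terms at $t=0$ and interior integrals of $|\dot\varphi^{n,\delta}|^2$ in $H^{1+2\beta}$-type norms, each carrying at least one factor of $\mu_n$. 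By Lemma \ref{energy-est-lemma} with $\alpha=1/2$,
\[
|\dot\varphi^{n,\delta}|^2_{L^2((-\infty,0);H^{1+2\beta})}+\mu_n\sup_{t\leq 0}|\dot\varphi^{n,\delta}(t)|^2_{H^{1+2\beta}}+|\varphi^{n,\delta}|^2_{L^2((-\infty,0);H^{2(1+\beta)})}\leq c\,|\psi^{n,\delta}|^2_{L^2(H^1)}\leq cL/\delta,
\]
uniformly in $n$, so the $\mu_n$-prefactored interior integrals are bounded by $c\mu_n/\delta$ and hence vanish for fixed $\delta$ as $\mu_n\to 0$. The boundary contributions at $t=0$ are the delicate ones: the naïve bound on $\mu_n|\dot\varphi^{n,\delta}(0)|^2_{H^{2\beta}}$ from the energy estimate is only $O(1/\delta)$, so I plan to exploit the fact that, for fixed $\delta>0$, $\psi^{n,\delta}$ lives in $L^2(H^1)$ uniformly in $n$, so a Smoluchowski–Kramers estimate for the wave skeleton with such regular controls yields $\sqrt{\mu_n}\,|\dot\varphi^{n,\delta}(0)|_{H^{2\beta}}\to 0$ as $\mu_n\to 0$ for each fixed $\delta$; this forces the boundary terms to vanish in the limit as well. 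The end result is an estimate of the form $|J^{\mu_n}_{-\infty}(z^{n,\delta})|\leq\eta(\mu_n,\delta)$ with $\eta(\mu_n,\delta)\to 0$ as $\mu_n\to 0$ for each fixed $\delta>0$.

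Once such an $\eta$ is available, I pick $\delta_n\downarrow 0$ slowly enough so that $\eta(\mu_n,\delta_n)\to 0$ and, using the estimate \eqref{m100} from the proof of Lemma \ref{endpoint-difference-theorem}, also $|x^n-x^{n,\delta_n}|_{H^{2\beta}}\leq c\sqrt{\delta_n}\to 0$, hence $x^n-x^{n,\delta_n}\to 0$ in $H$. From \eqref{plan-master} we obtain $V(x^{n,\delta_n})\leq V_{\mu_n}(x^n)+\eta(\mu_n,\delta_n)\to L$, so for $n$ large $x^{n,\delta_n}$ lies in the level set $\{V\leq L+1\}$, which is compact in $H$ by Theorem \ref{fine1}. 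Passing to a subsequence, $x^{n,\delta_n}\to \bar x$ in $H$; by the triangle inequality also $x^n\to\bar x$ in $H$, and the closedness of $N$ gives $\bar x\in N$. The lower semicontinuity of $V$ on $H$ (an immediate consequence of the compactness of its level sets from Theorem \ref{fine1}) finally yields
\[
\inf_{x\in N}V(x)\leq V(\bar x)\leq\liminf_{n\to\infty}V(x^{n,\delta_n})\leq L=\liminf_{\mu\downarrow 0}\inf_{x\in N}V_\mu(x),
\]
which is \eqref{m-fine101}.
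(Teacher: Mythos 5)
Your overall architecture coincides with the paper's: regularize the optimal control by $(I-\delta A)^{-1/2}$, use the decomposition \eqref{I-mu=I+remainder} together with $|\psi^{\mu,\delta}|_{L^2}\leq|\psi^{\mu}|_{L^2}$ to get $V(x^{\mu,\delta})\leq \inf_{x\in N}V_\mu(x)-J^{\mu}_{-\infty}(z^{\mu,\delta})$, control $|x^{\mu}-x^{\mu,\delta}|_{H^{2\beta}}\leq c\sqrt{\delta}$ by Lemma \ref{endpoint-difference-theorem}, and conclude via compactness of the level sets of $V$ and the closedness of $N$. The one substantive deviation is at what you yourself call the heart of the proof, and there your argument has a genuine gap: to kill the boundary contributions at $t=0$ you invoke the unproven claim that, for each fixed $\delta$, $\sqrt{\mu_n}\,|\partial_t\varphi^{n,\delta}(0)|_{H^{2\beta}}\to 0$ as $\mu_n\to 0$, attributing it to a ``Smoluchowski--Kramers estimate for the wave skeleton with regular controls''. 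Nothing in the paper provides such an estimate, and it is doubtful as stated: the controls $\psi^{n,\delta}=(I-\delta A)^{-1/2}\psi^{n}$ are smoothed in space but not in time, and they vary with $\mu_n$ (they are the optimal controls of the $\mu_n$-problem), so they may concentrate near $t=0$; the best available bound, namely \eqref{m43} with $\alpha=0$, gives only $\sqrt{\mu}\,|\partial_t\varphi^{n,\delta}(0)|_{H^{2\beta}}\leq c\,|\psi^{n,\delta}|_{L^2((-\infty,0);H)}$, which is $O(1)$ and not $o(1)$ (already for a single linear mode, an $L^2$-normalized control proportional to $\mu^{-1/2}e^{t/\mu}$ produces a velocity of order $\mu^{-1/2}$ at $t=0$).

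Fortunately the estimate you are missing is also unnecessary, and this is precisely how the paper proceeds. After the integration by parts, the quadratic velocity boundary term enters with a favorable sign, $I^{\mu,\delta}_1=-\frac{\mu}{2}|Q^{-1}y^{\mu,\delta}|_H^2\leq 0$, so it is simply discarded; the remaining boundary terms are mixed, of the form $\mu\left<(-A)Q^{-1}x^{\mu,\delta},Q^{-1}y^{\mu,\delta}\right>_H$ and $\mu\left<Q^{-1}B(x^{\mu,\delta}),Q^{-1}y^{\mu,\delta}\right>_H$, and writing $\mu=\sqrt{\mu}\cdot\sqrt{\mu}$ and using \eqref{m43} at the two regularity levels ($\alpha=1/2$ gives $|x^{\mu,\delta}|_{H^{2+2\beta}}\leq c\,\delta^{-1/2}|\psi^{\mu}|_{L^2((-\infty,0);H)}$, while $\alpha=0$ gives $\sqrt{\mu}\,|y^{\mu,\delta}|_{H^{2\beta}}\leq c\,|\psi^{\mu}|_{L^2((-\infty,0);H)}$) one obtains bounds of order $\sqrt{\mu}\,\delta^{-1/2}\inf_{x\in N}V_\mu(x)$, which vanish for fixed $\delta$, and indeed even along the paper's explicit choice $\delta=\sqrt{\mu}$. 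With that correction, your fixed-$\delta$-then-diagonal scheme ($\delta_n\downarrow 0$ slowly, combined with \eqref{m100}) closes in exactly the same way as the paper's argument. A minor point: the compactness and lower semicontinuity of the level sets of $V$ that you use at the end come from \cite[Proposition 5.1]{cerrok}, not from Theorem \ref{fine1}, which concerns $V^\mu$ and $V_\mu$.
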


\begin{proof}
 If the right hand side in \eqref{m-fine101} is infinite,  the theorem is trivially true.  Therefore, in what follows we can assume that
 \begin{equation}
 \label{m-fine100}
\liminf_{\mu \to 0} \inf_{x \in N} V_\mu(x) < +\infty.
 \end{equation}

  We first observe that, if we define
  \[\varphi^{\mu,\d}(t)=\Pi_1 z^{\mu,\d}(t),\ \ \ t\leq 0,\]
  in view of \eqref{I-mu=I+remainder}
\begin{equation}
 \label{V<V^mu-ineq}
 \begin{array}{l}
 \ds{V(x^{\mu,\delta}) \leq I_{-\infty}(\varphi^{\mu,\delta}) =I^\mu_{-\infty}(z^{\mu,\delta}) - \frac{\mu^2}{2} \int_{-\infty}^0 \left|Q^{-1}\frac{\partial^2{\varphi}^{\mu,\delta}}{\partial t^2}(t) \right|_H^2 dt }\\
 \vs
 \ds{- \mu \int_{-\infty}^0 \left<Q^{-1} \frac{\partial^2{\varphi}^{\mu,\delta}}{\partial t^2}(t),  Q^{-1} \frac{\partial{\varphi}^{\mu,\delta}}{\partial t}(t) - Q^{-1} A \varphi^{\mu,\delta}(t) - Q^{-1}B(\varphi^{\mu,\delta}(t)) \right>_H dt.}
    \end{array}
    \end{equation}
Since
\begin{equation}
\label{m91}
|\psi^{\mu,\d}(t)|_H=|(I-\d A)^{-1/2}\psi^\mu(t)|_H\leq |\psi^\mu(t)|_H,\ \ \ \ t\leq 0,
\end{equation}
we have
\[I^\mu_{-\infty}(z^{\mu,\delta})\leq I^\mu_{-\infty}(z^{\mu})=\inf_{x \in N}V_\mu(x),\]
so that
\[\begin{array}{l}
\ds{ V(x^{\mu,\delta}) \leq  \inf_{x \in N} V_\mu(x)} \\
\vs
\ds{- \mu \int_{-\infty}^0 \left<Q^{-1} \frac{\partial^2{\varphi}^{\mu,\delta}}{\partial t^2}(t),  Q^{-1} \frac{\partial{\varphi}^{\mu,\delta}}{\partial t}(t) - Q^{-1} A \varphi^{\mu,\delta}(t) - Q^{-1}B(\varphi^{\mu,\delta}(t)) \right>_H dt.}
\end{array}\]
Thanks to \eqref{m30} and Hypothesis \ref{H2}, by integrating by parts
\begin{equation}
\label{remainder}
\begin{array}{l}
\ds{V(x^{\mu,\d})\leq  \inf_{x \in N} V_\mu(x) }\\
\vs
\ds{-\frac{\mu}{2} |Q^{-1} y^{\mu,\delta} |_H^2
    - \mu \left<(-A) Q^{-1} x^{\mu,\delta}, Q^{-1} y^{\mu,\delta} \right>_H
    + \mu \left< Q^{-1}B(x^{\mu,\delta}), Q^{-1} y^{\mu,\delta} \right>_H}\\
    \vs
    \ds{+ c\,\mu \int_{-\infty}^0 \left| \frac{\partial{\varphi}^{\mu,\delta}}{\partial t}(t) \right|_{H^{1+2\beta}}^2 dt
    + c\,\gamma_{2\beta} \mu \int_{-\infty}^0 \left|\frac{\partial{\varphi}^{\mu,\delta}}{\partial t}(t) \right|_{H^{2\beta}}^2 dt= \inf_{x \in N}V_\mu(x)+\sum_{i=1}^5I^{\mu,\d}_i.}
    \end{array}
    \end{equation}

First, we note that
\begin{equation}
  \label{I_1-est}
  I^{\mu,\delta}_1 \leq 0.
\end{equation}
Next, by \eqref{m43} see that
\begin{equation*}
\begin{array}{l}
  \ds{I^{\mu,\delta}_2 + I^{\mu,\delta}_4 \leq c\sqrt{\mu} \left( |x^{\mu,\delta}|_{H^{2\beta +2}}^2 + \mu |y^{\mu,\delta}|_{H^{2\beta +1 }}^2  \right) + c\mu \int_{-\infty}^0 |z^{\mu,\delta}(t)|_{\H^{2 + 2\beta}}^2 dt} \\
  \ds{\leq c(\mu + \sqrt{\mu}) \int_{-\infty}^0 |\psi^{\mu,\delta}(t)|_{H^1}^2 dt}.
\end{array}
\end{equation*}
Since for any $h \in\,H$
we have $(I - \delta A)^{-\frac{1}{2}} h \in D(-A)^{\frac{1}{2}}$ and
\[\left| (-A)^{\frac{1}{2}} (I - \delta A)^{-\frac{1}{2}} h \right|_H \leq \delta^{-1/2} \left|h \right|_H,\]
we have
\[\left|\psi^{\mu,\delta}(t) \right|_{H^1}\leq \d^{-1/2}\,\left|\psi^{\mu}(t) \right|_{H},\ \ \ t\leq 0.\]
Therefore, by \eqref{l2},
\begin{equation}
  \label{I_2-I_4-est}
  I^{\mu,\delta}_2 + I^{\mu,\delta}_4 \leq c\, \d^{-1/2}(\mu + \sqrt{\mu}) \int_0^t |\psi^\mu(t)|_H^2 =2\, c \delta^{-1/2}(\mu + \sqrt{\mu}) \inf_{x \in N} V_\mu(x).
\end{equation}
By the same arguments, \eqref{m43}, and \eqref{m91} give
\begin{equation}
  \label{I_3-I_5-est}
  I^{\mu,\delta}_3 + I^{\mu,\delta}_5 \leq c( \mu + \sqrt{\mu}) \inf_{x \in N} V_\mu(x).
\end{equation}
Combining together \eqref{I_1-est}, \eqref{I_2-I_4-est}, and \eqref{I_3-I_5-est} with \eqref{remainder},  we obtain,
\begin{equation}
  V(x^{\mu,\delta}) \leq \inf_{x \in N} V_\mu(x)+ c(\mu + \sqrt{\mu})(1 + \delta^{-1/2})\, \inf_{x \in N} V_\mu(x).
\end{equation}
From this, due to \eqref{m-fine100} we see that
\[\liminf_{\mu\to0}V(x^{\mu,\sqrt{\mu}})\leq \liminf_{\mu\to0} \inf_{x \in N}V_\mu(x).\]
Since we are assuming \eqref{m-fine100}, and, by \cite[Proposition 5.1]{cerrok}, the level sets of $V$ are compact, there is a sequence $\mu_n \to 0$ and $x^0 \in\,H$ such that
\[\lim_{n\to \infty}|x^{\mu_n,\sqrt{\mu_n}}-x^0|_H=0,\ \ \ V(x^0) \leq \liminf_{\mu \to 0} V(x^{\mu,\sqrt{\mu}}).\]
By \eqref{m89}, we have that $x^{\mu_n}$ converges to $x^0$ in $H$, so that $x_0 \in N$.  This means that  we can conclude, as
\[\inf_{x \in N} V(x) \leq V(x^0) \leq \liminf_{\mu \to 0} V(x^{\mu,\sqrt{\mu}}) \leq \liminf_{\mu \to 0} \inf_{x \in N} V_\mu(x).   \]

\end{proof}

\section{Application to the exit problem}
In this section we  study the problem of the exit of the solution $u^\mu_\e$ of equation \eqref{semilinear-wave-eq-intro} from a domain $G\subset H$, for any $\mu>0$ fixed. Then we  apply the limiting results proved in Theorems \ref{upper-bound-thm} and \ref{t.82} to show that, when $\mu$ is small, the relevant quantities in the exit problem from $G$ for the solution $u^\mu_\e$ of equation \eqref{semilinear-wave-eq-intro}  can be approximated by the corresponding ones arising  for equation \eqref{semilinear-heat-eq-intro}.

First, let us give some assumptions on the set $G$. 

\begin{Hypothesis}
\label{H3}
The domain $G \subset H$ is an open, bounded, connected set, such that  $0 \in G$.
Moreover, for any $x \in \partial G \cap H^{1 + 2\beta}$ there exists a sequence $\{x_n\}_{n \in \nat} \subset \bar{G}^c \cap H^{1+2\beta}$ such that
\begin{equation} \label{boundary-reg-assum}
  \lim_{n \to +\infty} \left| x_n - x \right|_{\H_{1 + 2\beta}} =0.
\end{equation}
\end{Hypothesis}
Assume now that $G$ is an open, bounded and connected set such that, for any $x \in \partial G \cap H^{1 + 2\beta}$, there exists a $y \in \bar{G}^c \cap H^{1 + 2\beta}$ such that
\begin{equation}
\label{r.1}
\{ty + (1-t)x: 0<t\leq1\} \subset \bar{G}^c.\end{equation}
Then it is immediate to check that \eqref{boundary-reg-assum} is satisfied.
Condition \eqref{r.1}  is true, for example, if $G$ is convex, because of the Hahn-Banach separation theorem and the density of $H^{1 + 2\beta}$ in $H$.

\begin{Lemma}
Under Hypothesis \ref{H3}
\begin{equation}
\label{m-partial}
V_\mu(\partial G):=  \inf_{x \in \partial G} V_\mu(x)=V_\mu(x_{G,\mu})<\infty,
\end{equation}
for some $x_{G,\mu} \in\,\partial G\cap \H_{1+2\beta}$.
\end{Lemma}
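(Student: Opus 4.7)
The plan rests on three facts already established about $V_\mu$: (i) it is finite and continuous on $H^{1+2\beta}$ (Corollary \ref{fine13} and Theorem \ref{continuity-of-V-mu-tilde-thm}); (ii) its sublevel sets $K_\mu(r)$ are compact in $H$ (Theorem \ref{fine1}), which in particular gives lower semicontinuity of $V_\mu$ on $H$; and (iii) whenever $V_\mu(x)<\infty$, the minimizer furnished by Theorem \ref{quasipotential-representation-thm} lies in $C((-\infty,0];\H_{1+2\beta})$ by Lemma \ref{wave-high-reg-conv-to-zero-lem}, forcing $x\in H^{1+2\beta}$ (this is exactly the observation used in the proof of Theorem \ref{continuity-of-V-mu-tilde-thm}).

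First I would exhibit a point of $\partial G\cap H^{1+2\beta}$ to show that $V_\mu(\partial G)$ is finite. Since $G$ is bounded, $H\setminus \bar G$ is nonempty and open, so by density of $H^{1+2\beta}$ in $H$ one can pick $\tilde y\in (H\setminus \bar G)\cap H^{1+2\beta}$. The continuous path $t\mapsto t\tilde y$, $t\in[0,1]$, lies entirely in $H^{1+2\beta}$, starts at $0\in G$, and ends at $\tilde y\notin \bar G$; setting $\tau=\sup\{t\in[0,1]:t\tilde y\in G\}$, openness of $G$ and of $\bar G^c$ yield $\tau\in(0,1)$ and $\tau\tilde y\in \bar G\setminus G=\partial G$. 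Hence $\tau\tilde y\in\partial G\cap H^{1+2\beta}$, and by (i) we have $V_\mu(\partial G)\le V_\mu(\tau\tilde y)<\infty$.

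Next I would extract a minimizer and lift its regularity. Fix $r=V_\mu(\partial G)+1$ and take a minimizing sequence $\{x_n\}\subset\partial G$ with $V_\mu(x_n)\to V_\mu(\partial G)$; eventually $x_n\in K_\mu(r)\cap\partial G$, which is compact in $H$ by (ii) together with the closedness of $\partial G$. A subsequence converges in $H$ to some $x_{G,\mu}\in\partial G$, and the lower semicontinuity of $V_\mu$ on $H$ from (ii) gives $V_\mu(x_{G,\mu})\le\liminf_n V_\mu(x_n)=V_\mu(\partial G)$, so the infimum is attained. Since $V_\mu(x_{G,\mu})<\infty$, invoking (iii) shows $x_{G,\mu}\in H^{1+2\beta}$, completing the proof of \eqref{m-partial}. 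There is no substantial obstacle: all ingredients have been prepared. The only non-cosmetic step is the density-plus-segment construction of a point in $\partial G\cap H^{1+2\beta}$, which is needed because finiteness of $V_\mu$ has been secured only on $H^{1+2\beta}$; the stronger boundary approximation condition \eqref{boundary-reg-assum} of Hypothesis \ref{H3} plays no role here and is reserved for the exit-place analysis to come.
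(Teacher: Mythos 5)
Your proposal is correct and follows essentially the same route as the paper: the same density-plus-segment construction of a point in $\partial G\cap H^{1+2\beta}$ gives finiteness via the continuity of $V_\mu$ on $H^{1+2\beta}$, and attainment follows from the compact level sets of Theorem \ref{fine1} together with lower semicontinuity and closedness of $\partial G$. The only difference is that you spell out the minimizing-sequence argument and the regularity upgrade $x_{G,\mu}\in H^{1+2\beta}$ (via Theorem \ref{quasipotential-representation-thm} and Lemma \ref{wave-high-reg-conv-to-zero-lem}), which the paper leaves implicit in its citation of Theorem \ref{fine1}.
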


\begin{proof}
Since  $\bar{G}^c$ is an open set,  there exists $\tilde{x} \in \bar{G}^c \cap H^{1 + 2\beta}$. Because $0 \in G$, and the path $t \mapsto t\tilde{x}$ is continuous, there must exist $0<t_0<1$ such that $ t_0\tilde{x} \in \partial G$. Clearly, $t_0 \tilde{x} \in H^{1 + 2\beta}$, so that, according to Theorem \ref{continuity-of-V-mu-tilde-thm}, the first equality in \eqref{m-partial} is true as $\partial G\cap H^{1+2\beta}\neq \emptyset$.

 Moreover, thanks to Theorem \ref{fine1}, the first equality in \eqref{m-partial} implies that there exists $x_{G,\mu} \in\,\partial G\cap \H_{1+2\beta}$ such that
\begin{equation}
\label{m-minimum}
V_\mu(x_{G,\mu})=V_\mu(\partial G).
\end{equation}

\end{proof}

Now, if we denote by $z^\mu_{\e,z_0}=(u^\mu_{\e,z_0},v^\mu_{\e,z_0})$ the mild solution of \eqref{abstract}, with initial position and velocity $z_0=(u_0,v_0) \in\,\H$, we define the exit time
\begin{equation} \label{wave-exit-time-def}
  \tau^{\mu,\epsilon}_{z_0} = \inf \left\{t>0: u^\mu_{\epsilon,z_0} (t) \not \in G \right\}.
\end{equation}
Here is the main result of this section
\begin{Theorem}
\label{m-t81}
  There exists $\mu_0>0$ such that for $\mu<\mu_0$ the following conditions are verified.
  For any $z_0=(u_0,v_0) \in\,\H$ such that $u_0 \in\,G$ and $u^\mu_{0,z_0}(t) \in\,G$, for $t\geq 0$,
  \begin{enumerate}
    \item The exit time has the following asymptotic growth
    \begin{equation}
    \label{m-con1}
      \lim_{\epsilon \to 0} \epsilon \log  \E \left( \tau^{\mu,\epsilon}_{z_0} \right)= \inf_{x \in \partial G} V_\mu(x),
    \end{equation}
  and for any any $\eta>0$,
    \begin{equation}
    \label{m-con2}
      \lim_{\epsilon \to 0} \Pro \left( \exp\le(\frac{1}{\epsilon}(V_\mu(\partial G) -\eta)\r) \leq \tau^{\mu,\epsilon}_{z_0} \leq \exp\le(\frac{1}{\epsilon}(V_\mu(\partial G) + \eta)\r) \right) =1.
    \end{equation}
    
    \item  For any closed $N \subset \partial G$ such that $\ds{\inf_{x\in N}V_\mu(x) > \inf_{x \in \partial G} V_\mu (x)}$, it holds
    \begin{equation} \label{m-con3}
      \lim_{\epsilon \to 0} \Pro\left( u^\mu_{\epsilon,z_0}(\tau^{\mu,\epsilon}_{z_0}) \in N \right) = 0.
    \end{equation}
  \end{enumerate}
\end{Theorem}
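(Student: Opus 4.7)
The plan is to follow the Freidlin--Wentzell methodology for exit problems in the form adapted to an infinite-dimensional setting where the quasi-potential fails to be continuous on the ambient Hilbert space $H$. Throughout I would work with the Markov pair $z^\mu_{\e,z_0}=(u^\mu_{\e,z_0},v^\mu_{\e,z_0})$ in $\H$ (the scalar exit time $\tau^{\mu,\e}_{z_0}$ is defined solely in terms of $u^\mu_{\e,z_0}$ but the LDP requires the full pair), combining the uniform LDP bounds \eqref{LDP-lower-bound}--\eqref{LDP-upper-bound}, the continuity of $V_\mu$ on $H^{1+2\beta}$ from Theorem \ref{continuity-of-V-mu-tilde-thm}, the compactness of the level sets of $V_\mu$ from Theorem \ref{fine1}, and the uniform attraction result of Lemma \ref{unperturbed-system-conv-to-zero-lem}. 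The extra hypothesis $u^\mu_{0,z_0}(t)\in G$ for all $t\geq 0$ takes the place of the classical basin-of-attraction assumption; it is needed precisely because, as observed in Section 3, the unperturbed flow is not uniformly attracted to $0$ from the unbounded cylinder $G\times H^{-1}$.

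\textbf{Upper bound for the exit time.} Starting from the minimizer $x_{G,\mu}\in\partial G\cap \H_{1+2\beta}$ given by \eqref{m-minimum}, I would invoke Hypothesis \ref{H3} to produce a sequence $\{x_n\}\subset \bar G^c\cap H^{1+2\beta}$ converging to $x_{G,\mu}$ in $H^{1+2\beta}$; by Theorem \ref{continuity-of-V-mu-tilde-thm} one has $V_\mu(x_n)\to V_\mu(\partial G)$. Fixing $x_n$ with $V_\mu(x_n)<V_\mu(\partial G)+\eta/3$, Theorem \ref{quasipotential-representation-thm} supplies a control $\psi$ driving the skeleton equation from $0$ to some $(x_n,y_n)$ in a finite time $T_\eta$. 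The uniform LDP lower bound \eqref{LDP-lower-bound} then yields a constant $p_\eta>0$ such that, for all small $\e$ and any starting point in a small $\H$-ball around $0$, the probability that $z^\mu_{\e,z_0}$ tracks this control closely enough to force $u^\mu_{\e,z_0}$ to leave $G$ by time $T_\eta$ is at least $\exp\!\bigl(-\tfrac{1}{\e}(V_\mu(\partial G)+\eta/2)\bigr)$. Combining this with Lemma \ref{unperturbed-system-conv-to-zero-lem} (to drive the deterministic skeleton back to a neighborhood of $0$ in bounded time) and the strong Markov property of $z^\mu_\e$ in $\H$ produces a geometric bound on $\tau^{\mu,\e}_{z_0}$, giving the upper bound in \eqref{m-con1} and the right-hand side of \eqref{m-con2}.

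\textbf{Lower bound for the exit time and exit location.} For the lower bound in \eqref{m-con1}--\eqref{m-con2} and for \eqref{m-con3}, the plan is to define cycles using two stopping times: a time $\sigma_k$ at which the pair $z^\mu_\e$ first returns to a small $\H$-ball $B_\rho$ around $0$, and a time $\tau_k>\sigma_k$ at which it first leaves a larger ball $B_{\rho_0}$. On each cycle the process has a deterministic time window to accumulate action. Writing $\alpha:=V_\mu(\partial G)-\eta$ for the exit-time lower bound (resp.\ $\alpha:=\inf_{x\in N}V_\mu(x)-\eta$ with $\eta$ small enough that $\alpha>V_\mu(\partial G)$ for the exit-location claim), the LDP upper bound \eqref{LDP-upper-bound}, applied with the compact level sets of $I^\mu_{0,T}$ restricted to initial data in $B_{\rho_0}$, shows that within one cycle the probability that $z^\mu_\e$ follows any trajectory of action below $\alpha$ that exits $G$ (resp.\ exits through $N$) is at most $e^{-\alpha/\e}$ for small $\e$. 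A Borel--Cantelli type summation over cycles, together with the upper bound already proved, closes \eqref{m-con1}, \eqref{m-con2}, and \eqref{m-con3}.

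\textbf{Main obstacle.} The central difficulty is the twofold failure of the classical picture: $V_\mu$ is only lower semi-continuous on $H$, and the ``phase space'' $G\times H^{-1}$ in which the exit really takes place is unbounded in the velocity direction. Hypothesis \ref{H3} is the key tool against the first: it allows approximation of $\partial G$ from outside in the fine topology of $H^{1+2\beta}$ where $V_\mu$ is continuous, which is exactly what is needed to place the control target $x_n$ strictly outside $\bar G$ while keeping $V_\mu(x_n)$ close to $V_\mu(\partial G)$. The second difficulty is more delicate: I must ensure that the velocity component does not explode during a cycle, which would ruin the application of the uniform LDP. Here the a priori bounds of Lemma \ref{X-sup-bounded-lemma}, together with Lemma \ref{unperturbed-system-conv-to-zero-lem} applied to the unperturbed trajectory passing through any bounded subset of $G\times H^{-1}$ that the stochastic process visits, and the hypothesis that the unperturbed path from $z_0$ stays in $G$, allow one to localize the problem to a bounded set in $\H$ before invoking the uniform LDP. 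This localization step, rather than any new large deviation input, is what I expect to be the main technical obstacle.
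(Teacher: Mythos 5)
Your overall architecture is the same as the paper's: use Hypothesis \ref{H3} together with the continuity of $V_\mu$ on $H^{1+2\beta}$ (Theorem \ref{continuity-of-V-mu-tilde-thm}) to approximate the boundary minimizer $x_{G,\mu}$ from outside $\bar G$ and build a cheap exiting control; use the uniform LDP bounds, the attraction Lemma \ref{unperturbed-system-conv-to-zero-lem} and the Markov property for the geometric iteration giving the upper bound; and run the standard Freidlin--Wentzell cycle argument (as in \cite{dz}, \cite{fw}) for the lower bound and the exit place. This is exactly the paper's route, which it implements through the preliminary Lemmas \ref{lemma1}--\ref{lemma5}.

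There is, however, one concrete gap: your plan for taming the unbounded velocity direction does not close. At the renewal times of the Markov iteration (and inside each cycle of the lower-bound argument) the process can sit at points of $G\times H^{-1}$ with arbitrarily large $|v|_{H^{-1}}$, and the tools you list there do not apply: Lemma \ref{X-sup-bounded-lemma} only bounds the unperturbed flow in terms of $|z_0|_\H$, which is unbounded over the cylinder; Lemma \ref{unperturbed-system-conv-to-zero-lem} requires a bounded set of initial data; and the hypothesis that $u^\mu_{0,z_0}(t)\in G$ for all $t$ concerns only the original $z_0$, not the states reached at later stopping times. The missing ingredient is the lower bound of Lemma \ref{large-init-veloc-causes-exit-lem}, $\sup_{s\leq t}|\Pi_1 z^\mu_{z_0}(s)|_H\geq c(\mu,t)\,|\Pi_2 z_0|_{H^{-1}}$: it shows that any state with $|v_0|_{H^{-1}}$ above a threshold $\kappa$ forces the deterministic position to leave $G$ before time $1$, hence (by the uniform convergence in probability of $z^\mu_{\e,z_0}$ to $z^\mu_{z_0}$) the stochastic trajectory exits quickly as well. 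This is what makes the exit-probability lower bound uniform over all of $G\times H^{-1}$ in the upper-bound iteration, and it is also the engine behind Lemma \ref{stay-out-of-small-ball-lem}, i.e.\ precisely the statement that yields the ``deterministic time window'' your cycle argument takes for granted. Without this dichotomy (large velocity $\Rightarrow$ fast exit; bounded velocity $\Rightarrow$ uniform attraction to a small ball), neither the geometric bound on $\E\,\tau^{\mu,\e}_{z_0}$ nor the localization needed for the uniform LDP in the lower bound is justified.
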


\begin{Remark}
{\em   The requirement that $u^\mu_{0,z_0}(t) \in G$ for all $t \geq 0$ is necessary because in Lemma \ref{large-init-veloc-causes-exit-lem} we showed that there exist $z_0\in G \times H^{-1}$ such that $u^\mu_{0,z_0}$ leaves $G$ in finite time.  Of course, for these initial conditions, the stochastic processes $u^\mu_{\epsilon,z_0}$ will also exit in finite time for small $\epsilon$.}
\end{Remark}

In \cite{tran} it has been proven that an analogous result to Theorem \ref{m-t81} holds for equation \eqref{abstract-heat}.
If we denote by $u_{\epsilon,u_0}$  the mild solutions of equation \eqref{abstract-heat}, with initial condition $u_0 \in\,H$, we define  the exit time
\[
  \tau^\epsilon_{u_0} = \inf \left\{t>0: u_{\epsilon,u_0}(t) \not \in G \right\}.
\]
In \cite{tran} it has been proven that for any $u_0 \in\,G$ such that $u_{0,u_0}(t) \in\,G$, for any $t\geq 0$, it holds
\[
\lim_{\epsilon \to 0} \epsilon \log  \E \left( \tau^{\epsilon}_{u_0} \right)= \inf_{x \in \partial G} V(x).
\]
Similarly, as we would expect, it also holds that
\[\lim_{\e \to 0} \e \log \tau^\e_{u_0} = \inf_{x \in \partial G} V(x),\ \ \ \text{in probability},\]
and if $N \subset \partial G$ is closed and $\inf_{x \in N} V(x)> \inf_{x \in \partial G} V(x)$,
\[\lim_{\e \to 0} \Pro \left(u^\e_{u_0}(\tau^\e_{u_0}) \in N \right) =0.\]
The proof of these facts is analogous to the proof of Theorem \ref{m-t81}

In view of what we have proven in Sections \ref{sec6} and \ref{sec7} and of Theorem \ref{m-t81}, this implies   that
the following Smoluchowski-Kramers approximations holds for the exit time.

\begin{Theorem}
\begin{enumerate}
  \item
  For any initial conditions $z_0 = (u_0,v_0)$,
  \begin{equation}
    \lim_{\mu \to 0} \lim_{\epsilon \to 0} \e\log\,\E \left(\tau^{\mu,\epsilon}_{z_0} \right) = \lim_{\epsilon \to 0} \e\log\,\E \left( \tau^{\epsilon}_{u_0} \right)=\inf_{x \in\,\partial G}V(x).
  \end{equation}

  \item  For any $\eta>0$, there exists $\mu_0>0$ such that for $\mu< \mu_0$
  \begin{equation}
  \label{m-fine200}
    \lim_{\epsilon \to 0} \Pro \left( e^{\frac{1}{\epsilon}(\bar{V} -\eta)} \leq \tau^{\mu,\epsilon}_{z_0} \leq e^{\frac{1}{\epsilon}(\bar{V} + \eta)}  \right) =1
  \end{equation}

  \item  For any $N \subset \partial G$ such that $\inf_{x \in N} V(x) < \inf_{x \in \partial G} V(x)$, there exits $\mu_0>0$ such that for all $\mu < \mu_0$,
  \begin{equation*}
    \lim_{\epsilon \to 0} \Pro_{z_0} \left( u^\mu_\epsilon(\tau^{\mu,\epsilon}) \in N \right) = 0.
  \end{equation*}
\end{enumerate}
\end{Theorem}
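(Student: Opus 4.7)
The plan is to combine the $\mu$-uniform exit-time asymptotics of Theorem \ref{m-t81} with the quasi-potential convergence established in Sections \ref{sec6} and \ref{sec7}. More precisely, for each fixed $\mu<\mu_0$, Theorem \ref{m-t81} already expresses the leading log-asymptotics of $\tau^{\mu,\e}_{z_0}$ in terms of $V_\mu(\partial G)=\inf_{x\in\partial G} V_\mu(x)$; Corollary \ref{upper-bound-cor} and Theorem \ref{t.82} then say that $V_\mu(\partial G)\to V(\partial G)=\inf_{x\in\partial G}V(x)$ as $\mu\downarrow 0$ (applied to the closed set $\partial G\subset H$, which intersects $H^{1+2\beta}$ by Hypothesis \ref{H3}). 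So in essence the only task is to interchange the two limits cleanly.

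For part (1), I would write
\[
\lim_{\e\to 0}\e\log\E\,\tau^{\mu,\e}_{z_0}=V_\mu(\partial G)
\]
using \eqref{m-con1}, and pass to the limit in $\mu$: by Corollary \ref{upper-bound-cor} and Theorem \ref{t.82},
\[
\lim_{\mu\to 0}V_\mu(\partial G)=V(\partial G)=\lim_{\e\to 0}\e\log\E\,\tau^{\e}_{u_0},
\]
where the last equality is the analogue of Theorem \ref{m-t81} for the heat equation (recalled in the paragraph after Theorem \ref{m-t81}).

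For part (2), write $\bar V:=V(\partial G)$. Given $\eta>0$, by the convergence $V_\mu(\partial G)\to \bar V$ there exists $\mu_0$ such that $|V_\mu(\partial G)-\bar V|<\eta/2$ for all $\mu<\mu_0$. Then for each such $\mu$, the event in \eqref{m-con2} with tolerance $\eta/2$ is contained in the event $\{\exp((\bar V-\eta)/\e)\leq \tau^{\mu,\e}_{z_0}\leq \exp((\bar V+\eta)/\e)\}$, and \eqref{m-fine200} follows from Theorem \ref{m-t81}(1). For part (3) (reading the strict inequality as $\inf_N V>\inf_{\partial G} V$, since otherwise the statement is vacuous because $\inf_N V\geq \inf_{\partial G} V$ always), set $\delta=\inf_N V(x)-\inf_{\partial G} V(x)>0$; by Theorem \ref{t.82} applied to the closed set $N$,
\[
\liminf_{\mu\to 0}\inf_{x\in N}V_\mu(x)\geq \inf_{x\in N}V(x),
\]
and by Corollary \ref{upper-bound-cor} applied to $\partial G$,
\[
\limsup_{\mu\to 0}\inf_{x\in\partial G}V_\mu(x)\leq \inf_{x\in\partial G}V(x).
\]
Thus for $\mu$ small enough, $\inf_N V_\mu>\inf_{\partial G} V_\mu +\delta/3$, and \eqref{m-con3} from Theorem \ref{m-t81}(2) yields the exit-place conclusion.

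The main subtlety, and really the only non-bookkeeping step, is verifying that the hypothesis ``$N\subset H$ closed'' used in Theorem \ref{t.82} can indeed be applied to $N\cap \partial G$ and to $\partial G$ itself, both of which are closed in $H$ by construction, together with checking that $\inf_{x\in\partial G}V_\mu(x)<\infty$ so that part (2) of Theorem \ref{m-t81} is applicable. The finiteness is exactly the content of \eqref{m-partial}, which in turn rests on Hypothesis \ref{H3} providing a boundary point in $H^{1+2\beta}$. Once these are checked, the rest is the two-step $\e\to 0$ followed by $\mu\to 0$ argument above; no further delicate estimate is required since the heavy lifting was done in Theorems \ref{upper-bound-thm} and \ref{t.82}.
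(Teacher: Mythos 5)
Your proposal is correct and is essentially the paper's own argument: the paper states this theorem as an immediate consequence of Theorem \ref{m-t81} together with Corollary \ref{upper-bound-cor} and Theorem \ref{t.82} (i.e.\ $V_\mu(\partial G)\to V(\partial G)$ and the analogous convergence on the closed set $N$), which is exactly the two-step $\e\to 0$ then $\mu\to 0$ interchange you carry out, including the correct reading of the inequality in part (3) as $\inf_N V>\inf_{\partial G}V$ and the finiteness of $V_\mu(\partial G)$ via \eqref{m-partial}.
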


We recall that in \cite{sal} we have proved that, in the case of gradient systems, for any $\mu>0$
\[V_\mu(x)=V(x),\ \ \ \ x \in\,H.\]
This means that in this case for any $z_0=(u_0,v_0) \in\,\H$ and $\mu>0$
\[\lim_{\epsilon \to 0} \e\,\log \E \left(\tau^{\mu,\epsilon}_{z_0} \right) = \lim_{\epsilon \to 0} \e\,\log \E \left( \tau^{\epsilon}_{u_0} \right)=\inf_{x \in\,\partial G}V(x).\]
and \eqref{m-fine200} holds for any $\mu>0$.

\subsection{Proof to Theorem \ref{m-t81}}

In order to prove Theorem \ref{m-t81}, we will need some preliminary lemmas, whose proofs are postponed to the next subsection.

\begin{Lemma}
\label{lemma1}
  For $\mu < (\alpha_1 - \gamma_0)\gamma_0^{-2}$, there exists a constant $c(\mu)>0$ such that  $z_1,z_2 \in \H$
  \begin{equation} \label{continuity-of-control-system}
\sup_{\psi \in\,L^2((0,+\infty);H)}    \sup_{t \ge 0} \left| z^\mu_{\psi,z_1}(t) - z^{\mu}_{\psi,z_2}(t) \right|_\H \leq c(\mu) |z_1 - z_2|_\H.
  \end{equation}
\end{Lemma}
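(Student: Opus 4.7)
The plan is to observe that $\psi$ enters the skeleton equation \eqref{m4} only through the additive term $Q_\mu\psi$, so when we subtract the two equations for $z^\mu_{\psi,z_1}$ and $z^\mu_{\psi,z_2}$ the control cancels out. The difference $w(t):=z^\mu_{\psi,z_1}(t)-z^\mu_{\psi,z_2}(t)$ therefore solves a ``semilinear unperturbed'' wave equation with initial datum $z_1-z_2$. More precisely, writing $\varphi(t)=\Pi_1 w(t)$, we have
\begin{equation*}
\mu\frac{\partial^2\varphi}{\partial t^2}(t)+\frac{\partial\varphi}{\partial t}(t)=A\varphi(t)+B(\Pi_1 z^\mu_{\psi,z_1}(t))-B(\Pi_1 z^\mu_{\psi,z_2}(t)),
\end{equation*}
with $\varphi(0)=\Pi_1(z_1-z_2)$ and $\partial_t\varphi(0)=\Pi_2(z_1-z_2)$. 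By the Lipschitz continuity of $B$ in $H$, the nonlinear forcing is bounded in $H$ by $\gamma_0|\varphi(t)|_H$, which is exactly the structural property used in Lemma \ref{X-sup-bounded-lemma}.

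First I would take the scalar product of this equation with $\partial_t\varphi$ in $H^{-1}$. Applying Cauchy--Schwarz together with the bound $|g|_{H^{-1}}^2\leq\alpha_1^{-1}|g|_H^2$ to the nonlinear term yields an analogue of \eqref{deriv-of-varphi-product}:
\begin{equation*}
\mu\frac{d}{dt}\left|\frac{\partial\varphi}{\partial t}(t)\right|_{H^{-1}}^2+2\left|\frac{\partial\varphi}{\partial t}(t)\right|_{H^{-1}}^2\leq-\frac{d}{dt}|\varphi(t)|_H^2+\left|\frac{\partial\varphi}{\partial t}(t)\right|_{H^{-1}}^2+\frac{\gamma_0^2}{\alpha_1}|\varphi(t)|_H^2.
\end{equation*}
Integrating in time gives the bound corresponding to \eqref{deriv-of-varphi-prod-int}, with the right-hand side replaced by $\mu|\Pi_2(z_1-z_2)|_{H^{-1}}^2+|\Pi_1(z_1-z_2)|_H^2+(\gamma_0^2/\alpha_1)\int_0^t|\varphi(s)|_H^2\,ds$.

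Next I would take the scalar product of the same equation with $\varphi(t)$ in $H^{-1}$, exactly as in the proof of Lemma \ref{X-sup-bounded-lemma}, to produce the analogue of \eqref{m-fine203}. Under the smallness assumption $\mu<(\alpha_1-\gamma_0)\gamma_0^{-2}$ the coefficient $\rho_\mu=2-2\gamma_0/\alpha_1-2\mu\gamma_0^2/\alpha_1$ is strictly positive, and an integration argument identical to the one used to derive \eqref{varphi-L2-bound} gives
\begin{equation*}
\int_0^\infty|\varphi(s)|_H^2\,ds\leq c(\mu)|z_1-z_2|_\H^2.
\end{equation*}
Feeding this back into the first energy estimate controls $\mu|\partial_t\varphi(t)|_{H^{-1}}^2+|\varphi(t)|_H^2$ uniformly in $t\geq 0$ by $c(\mu)|z_1-z_2|_\H^2$, which is the desired inequality \eqref{continuity-of-control-system}, uniformly in $\psi\in L^2((0,+\infty);H)$.

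There is no real obstacle: the entire mechanism is recycled from Lemmas \ref{X-sup-bounded-lemma} and \ref{unperturbed-system-conv-to-zero-lem}. The only point that needs to be checked carefully is the application of the Cauchy--Schwarz inequality in $H^{-1}$ to the difference $B(\Pi_1 z^\mu_{\psi,z_1})-B(\Pi_1 z^\mu_{\psi,z_2})$, so that the constants $\gamma_0^2/\alpha_1$ appearing in the coefficient $\rho_\mu$ are exactly the same as in the unperturbed case, which is what guarantees that the smallness assumption on $\mu$ is identical here.
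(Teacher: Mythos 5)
Your proposal is correct and matches the paper's own argument: the paper likewise observes that the control cancels in the difference, so that $\varphi(t)=\Pi_1\left(z^\mu_{\psi,z_1}(t)-z^\mu_{\psi,z_2}(t)\right)$ solves the damped wave equation with forcing $B(\Pi_1 z^\mu_{\psi,z_1}(t))-B(\Pi_1 z^\mu_{\psi,z_2}(t))$, and then concludes by repeating the energy estimates of Lemma \ref{X-sup-bounded-lemma}, exactly as you do. No gaps; your careful remark that the Lipschitz bound $\gamma_0|\varphi(t)|_H$ reproduces the same constant $\rho_\mu$ and hence the same smallness condition on $\mu$ is precisely the point the paper leaves implicit.
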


\begin{Lemma} \label{far-from-N-lem}
  For any closed set $N \subset H$, and any $A< V_\mu(N)$, there exists $\rho_0>0$  such that if $z \in C((0,T);H)$, with $|z(0)|_\H<\rho_0$ and $I^\mu_{0,T}(z) < A$, then it holds
  \begin{equation*}
    \inf_{t \leq T} \textnormal{dist}_H(\Pi_1 z(t), N) > |z(0)|_\H.
  \end{equation*}
\end{Lemma}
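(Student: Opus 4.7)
The strategy is to argue by contradiction, bypassing the construction of a small-cost ``bridge'' from $0$ to $z(0)$ by instead shifting the path's initial condition to $0$.

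Suppose the claim fails. Then for every $n \in \nat$ there exist $T_n > 0$, a path $z_n \in C([0,T_n];\H)$ and a time $t_n \in [0,T_n]$ satisfying $|z_n(0)|_\H < 1/n$, $I^\mu_{0,T_n}(z_n) < A$, and $\textnormal{dist}_H(\Pi_1 z_n(t_n),N) \leq |z_n(0)|_\H$. Let $\psi_n \in L^2((0,T_n);H)$ be a minimizing control, so that $z_n$ solves the skeleton equation \eqref{m4} with initial condition $z_n(0)$ and $\tfrac{1}{2}|\psi_n|_{L^2}^2 = I^\mu_{0,T_n}(z_n) < A$. Introduce the ``shifted'' path $\tilde{z}_n$, the solution of the same skeleton equation with the same control $\psi_n$ but with $\tilde{z}_n(0) = 0$. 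The difference $w_n := \tilde{z}_n - z_n$ then satisfies
\[
\frac{dw_n}{dt}(t) = A_\mu w_n(t) + \bigl(B_\mu(\tilde{z}_n(t)) - B_\mu(z_n(t))\bigr), \qquad w_n(0) = -z_n(0).
\]
Although the forcing $B_\mu(\tilde{z}_n)-B_\mu(z_n)$ is not of the form $B_\mu(w_n)$, the only property of $B$ used in the proof of Lemma \ref{X-sup-bounded-lemma} is the Lipschitz bound $|B(\Pi_1\tilde z_n)-B(\Pi_1 z_n)|_H \leq \gamma_0 |\Pi_1 w_n|_H$, and this transfers verbatim. Hence, for $\mu < (\alpha_1 - \gamma_0)\gamma_0^{-2}$, the same energy argument yields the \emph{uniform-in-time} bound
\[
\sup_{t \geq 0} |w_n(t)|_\H \leq c_1(\mu)\,|z_n(0)|_\H \longrightarrow 0.
\]
Crucially, this estimate does not deteriorate in $t$, so the argument works regardless of whether $\{t_n\}$ remains bounded as $n \to \infty$.

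Combining the uniform bound with the triangle inequality gives $\textnormal{dist}_H(\Pi_1 \tilde{z}_n(t_n),N) \to 0$, and since $\tilde{z}_n(0)=0$ the definitions of $I^\mu_{0,t_n}$, $V^\mu$ and $V_\mu$ yield
\[
V_\mu(\Pi_1 \tilde{z}_n(t_n)) \leq V^\mu(\tilde{z}_n(t_n)) \leq I^\mu_{0,t_n}(\tilde{z}_n) \leq \tfrac{1}{2}|\psi_n|_{L^2}^2 < A.
\]
Thus $\{\Pi_1\tilde{z}_n(t_n)\}$ lies in the sublevel set $K_\mu(A)$, which is compact in $H$ by Theorem \ref{fine1}. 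Extract a convergent subsequence with limit $x^* \in H$; from the distance estimate and closedness of $N$ one concludes $x^* \in N$, while lower semicontinuity of $V_\mu$ gives $V_\mu(x^*) \leq A$. This contradicts $A < \inf_{x \in N} V_\mu(x) \leq V_\mu(x^*)$.

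The main obstacle is the verification of the uniform-in-$t$ bound on $w_n$; once that is in hand, the rest of the argument is a standard compactness and lower-semicontinuity deduction. The device of taking $\tilde z_n(0)=0$ is what avoids the otherwise awkward problem of constructing a finite-action path from $0$ to an arbitrary $z_n(0) \in \H$, a construction that would demand $\H_{1+2\beta}$-regularity not provided by the hypotheses.
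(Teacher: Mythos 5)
Your proposal is correct and follows essentially the same route as the paper: argue by contradiction, replace the initial condition by $0$ while keeping the same control, use the uniform-in-time estimate on the difference of the two controlled trajectories, and conclude via compactness of the level sets of $V_\mu$ (Theorem \ref{fine1}) and lower semicontinuity. The only difference is cosmetic: the uniform bound on $w_n$ that you re-derive from the energy argument of Lemma \ref{X-sup-bounded-lemma} is precisely the already-established estimate \eqref{continuity-of-control-system} of Lemma \ref{lemma1}, which the paper simply cites.
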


\begin{Lemma} \label{stay-out-of-small-ball-lem}
For any $\mu,\e>0$ and $z_0 \in\,\H$, let
  \[\tau^{\mu,\epsilon}_{z_0,\rho} := \inf \left\{t>0: \Pi_1 z^{\mu}_{\epsilon,z_0}(t) \not \in G \text{ or } \left|z^{\mu}_{\epsilon,z_0} \right|_\H < \rho \right\},\]
where $\rho>0$ is small enough so that $B_{\H}(\rho)\subset G \times H^{-1}$.  Then
  \begin{equation} \label{stay-out-of-small-ball-eq}
    \lim_{t \to +\infty} \limsup_{\epsilon \to 0} \epsilon \log \left( \sup_{z_0 \in G \times H^{-1}} \P \left( \tau^{\mu,\epsilon}_{z_0,\rho} \ge t \right) \right) = -\infty.
  \end{equation}
\end{Lemma}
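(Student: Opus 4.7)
The plan is to reduce the statement to the stronger uniform bound
\[\sup_{z_0 \in \bar G \times H^{-1}} \P(\tau^{\mu,\e}_{z_0,\rho} > T_0) \leq e^{-C/\e},\]
for some $T_0, C > 0$ independent of $\e$ (for all $\e$ small). Once this is in hand, the Markov property applied at the times $T_0, 2T_0, \dots$ gives $\P(\tau^{\mu,\e}_{z_0,\rho} > nT_0) \leq e^{-nC/\e}$, so $\e \log \P(\tau^{\mu,\e}_{z_0,\rho} > nT_0) \leq -nC$, and monotonicity of the tail in $t$ delivers \eqref{stay-out-of-small-ball-eq}.

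The key deterministic observation I will exploit is that, for any $z_0 \in \bar G \times H^{-1}$, the unperturbed trajectory $z^\mu_{0,z_0}$ enters $\partial G \cup B_\H(\rho/2)$ in uniformly bounded time — despite the fact that the phase-space cylinder $\bar G \times H^{-1}$ is \emph{unbounded} in $\H$. Let $R_G := \sup_{u \in \bar G} |u|_H < \infty$, pick any $T_1 > 0$, and set $R_1 := (R_G+1)/c(\mu,T_1)$ with $c(\mu,T_1)$ as in Lemma \ref{large-init-veloc-causes-exit-lem}. I would then split according to the size of $|v_0|_{H^{-1}}$: (Case A) if $|v_0|_{H^{-1}} \geq R_1$, Lemma \ref{large-init-veloc-causes-exit-lem} forces $\sup_{s \leq T_1}|\Pi_1 z^\mu_{0,z_0}(s)|_H \geq R_G+1$, so the first component leaves $\bar G$ by time $T_1$ with a margin of $1$; (Case B) if $|v_0|_{H^{-1}} < R_1$, then $|z_0|_\H \leq R_2 := (R_G^2+R_1^2)^{1/2}$, and Lemma \ref{unperturbed-system-conv-to-zero-lem} supplies $T_2 > 0$ with $\sup_{|z_0|_\H \leq R_2}|z^\mu_{0,z_0}(T_2)|_\H < \rho/2$. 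Taking $T_0 := \max(T_1,T_2)$ handles both cases uniformly. Setting $\delta := \tfrac12 \min(1,\rho)$, a triangle-inequality check then shows that on $\{\tau^{\mu,\e}_{z_0,\rho} > T_0\}$ one must have $\sup_{t \leq T_0}|z^\mu_{\e,z_0}(t) - z^\mu_{0,z_0}(t)|_\H \geq \delta$, uniformly in $z_0 \in \bar G \times H^{-1}$.

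What remains is to bound $\P(\sup_{t \leq T_0}|z^\mu_{\e,z_0} - z^\mu_{0,z_0}|_\H \geq \delta)$ by $e^{-C/\e}$, uniformly in $z_0$. Writing
\[z^\mu_{\e,z_0}(t) - z^\mu_{0,z_0}(t) = \int_0^t S_\mu(t-s)\bigl[B_\mu(z^\mu_{\e,z_0}(s)) - B_\mu(z^\mu_{0,z_0}(s))\bigr] ds + \sqrt{\e}\, W^\mu_Q(t),\]
with $W^\mu_Q(t) := \int_0^t S_\mu(t-s)Q_\mu\,dw(s)$, the global Lipschitz property of $B_\mu$ on $\H$, the semigroup estimate \eqref{m17}, and Gronwall give
\[\sup_{t \leq T_0}|z^\mu_{\e,z_0}(t) - z^\mu_{0,z_0}(t)|_\H \leq K(\mu,T_0)\sqrt{\e}\sup_{t \leq T_0}|W^\mu_Q(t)|_\H,\]
with $K(\mu,T_0)$ independent of $z_0$. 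Since $W^\mu_Q$ is a centred $\H$-valued Gaussian process (trace-class condition ensured by Hypothesis \ref{H1} through $\sum_k \lambda_k^2/\alpha_k < \infty$, noted in the Remark following \eqref{m1}), a Fernique-type tail estimate gives $\P(\sup_{t \leq T_0}|W^\mu_Q|_\H \geq M) \leq e^{-cM^2}$ for large $M$, and choosing $M = \delta/(K(\mu,T_0)\sqrt{\e})$ closes the loop.

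The main obstacle is precisely the uniformity in $z_0 \in \bar G \times H^{-1}$: since this set is unbounded in $\H$, the usual finite-dimensional compactness shortcut (cf.\ \cite{fw,dz}) is unavailable, and one really must couple the two complementary regimes — large velocity (exit forced by Lemma \ref{large-init-veloc-causes-exit-lem}) and moderate velocity (attraction by Lemma \ref{unperturbed-system-conv-to-zero-lem}) — with a probabilistic tail estimate on the stochastic convolution whose constants do not depend on the initial datum. The global Lipschitz continuity of $B_\mu$ on $\H$ (Hypothesis \ref{H2}) and the fact that the law of $W^\mu_Q$ is independent of $z_0$ are what ultimately make this coupling work.
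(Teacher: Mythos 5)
Your proposal is correct, and its deterministic core is exactly the paper's: the paper likewise splits $G\times H^{-1}$ into the large-velocity regime, where Lemma \ref{large-init-veloc-causes-exit-lem} forces $\Pi_1 z^\mu_{z_0}$ out of a large ball $B_H(R)\supset G$ before time $1$, and the bounded regime, where Lemma \ref{unperturbed-system-conv-to-zero-lem} drives $z^\mu_{z_0}$ into $B_\H(\rho/2)$ by a uniform time, so that no unperturbed trajectory stays in $(B_H(R)\times H^{-1})\setminus B_\H(\rho/2)$ beyond $T=T_1+1$; and the final Markov-property iteration over blocks of length $T$ is also identical. Where you genuinely diverge is the per-block probabilistic estimate: the paper sets $a:=\inf\{I^\mu_{0,T}(z):\ z(t)\in (B_H(R)\times H^{-1})\setminus B_\H(\rho/2),\ t\in[0,T]\}>0$ (positivity being asserted from the absence of unperturbed trajectories in that set) and then invokes the uniform large deviation upper bound \eqref{LDP-upper-bound} with the level set $K^\mu_{0,T}(a)$ and margin $\rho/2$, obtaining $\limsup_\e \e\log\sup_{z_0}\P(\tau^{\mu,\e}_{z_0,\rho}\ge T)\le -a$; you instead compare $z^\mu_{\e,z_0}$ pathwise with $z^\mu_{z_0}$ via the Lipschitz bound on $B_\mu$, \eqref{m17} and Gronwall, and control the stochastic convolution by a Fernique-type Gaussian tail, yielding a bound $e^{-C/\e}$ directly. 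Your route is more elementary and self-contained: it does not need \eqref{LDP-upper-bound} at this point, and it sidesteps the positivity of $a$, which over a set of paths with unbounded velocity component and non-compact initial data is the most delicate (and, in the paper, tersely justified) step; the price is a cruder, non-variational constant $C$, which is harmless here because the Markov iteration sends the exponent to $-\infty$ anyway, whereas the paper's argument keeps the rate expressed through the action functional and reuses machinery already established. Two small points to make explicit if you write this up: the Fernique bound requires knowing that $W^\mu_Q$ has a.s. continuous $\H$-valued paths (guaranteed by Hypothesis \ref{H1} via $\sum_k\lambda_k^2/\alpha_k<\infty$, and already implicit in the definition of mild solutions), and, exactly as in the paper's own proof, your use of Lemma \ref{unperturbed-system-conv-to-zero-lem} restricts $\mu$ to the small range $\mu<(\alpha_1-\gamma_0)\gamma_0^{-2}$, consistent with the hypotheses of Theorem \ref{m-t81}.
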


\begin{Lemma}
\label{lemma4}
  Let $\tau^{\mu,\epsilon}_{z_0,\rho}$ be the exit time from Lemma \ref{stay-out-of-small-ball-lem} and let $N \subset \partial G$ be a closed set.  Then
  \begin{equation}
  \label{hit-N-prob-bound}
    \lim_{\rho \to 0}\, \limsup_{\epsilon \to 0} \epsilon \log \left( \sup_{z_0 \in\,B_\H((1 + M_\mu) \rho)} \P\left( \Pi_1  z^{\mu}_{\epsilon,z_0}
    \left(\tau^{\mu,\epsilon}_{z_0,\rho}\right) \in N \right)  \right) \leq - V_\mu(N),
  \end{equation}
  where $V_\mu(N)=\inf_{x \in\,N}V_\mu(x)$.
\end{Lemma}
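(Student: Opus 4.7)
The plan is to combine the large deviation upper bound \eqref{LDP-upper-bound} on a finite horizon $[0,T_0]$ with the tail estimate from Lemma \ref{stay-out-of-small-ball-lem}. The heart of the argument is a skeleton-level claim that rules out any path starting close to $0$ in $\H$ whose first component meets $N$ while having action strictly smaller than $V_\mu(N)$. Fix $\eta>0$; it suffices to prove that the limit in \eqref{hit-N-prob-bound} is at most $-V_\mu(N)+\eta$. We may assume $V_\mu(N)<+\infty$, otherwise the statement is trivial.

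First I would use lower semi-continuity of $V_\mu$ on $H$. By Theorem \ref{fine1} the sublevel set $K_\mu(V_\mu(N)-\eta/4)$ is compact in $H$, and since $N$ is closed in $H$ and disjoint from it, there exists $\delta_1>0$ such that $V_\mu(y)>V_\mu(N)-\eta/4$ for every $y\in N_{\delta_1}:=\{y\in H:\textnormal{dist}_H(y,N)\leq\delta_1\}$. Next, Lemma \ref{stay-out-of-small-ball-lem} allows me to choose $T_0>0$ large enough that
\[ \limsup_{\epsilon\to 0}\,\epsilon\log\,\sup_{z_0}\,\Pro(\tau^{\mu,\epsilon}_{z_0,\rho}>T_0)\leq -V_\mu(N). \]
Finally, I pick $\delta_0,\rho_0>0$ satisfying $(1+c(\mu))\delta_0+c(\mu)(1+M_\mu)\rho_0\leq\delta_1/2$, where $c(\mu)$ is the continuity constant from Lemma \ref{lemma1}.

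The crux is the following skeleton-level claim: for $\rho<\rho_0$ there is no $\hat z\in K^\mu_{0,T_0}(V_\mu(N)-\eta/2)$ with $|\hat z(0)|_\H\leq (1+M_\mu)\rho+\delta_0$ and $\Pi_1\hat z(t^*)\in N_{\delta_0}$ for some $t^*\in[0,T_0]$. Indeed, given such $\hat z$, let $\psi\in L^2((0,T_0);H)$ be the optimal control so that $\frac{1}{2}|\psi|_{L^2}^2=I^\mu_{0,T_0}(\hat z)$, and define the auxiliary skeleton path $\tilde z:=z^\mu_{\psi,0}$ driven by the same $\psi$ but started at the origin. By Lemma \ref{lemma1}, $|\hat z(t)-\tilde z(t)|_\H\leq c(\mu)|\hat z(0)|_\H$ for all $t\in[0,T_0]$, hence $\Pi_1\tilde z(t^*)\in N_{\delta_0+c(\mu)|\hat z(0)|_\H}\subseteq N_{\delta_1}$ by the choice of $\delta_0$ and $\rho_0$. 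Since $\tilde z(0)=0$, the path $\tilde z$ on $[0,t^*]$ is admissible in the definition of $V^\mu(\tilde z(t^*))$, so that
\[ V_\mu(\Pi_1\tilde z(t^*))\leq V^\mu(\tilde z(t^*))\leq I^\mu_{0,t^*}(\tilde z)\leq \tfrac{1}{2}|\psi|_{L^2}^2= I^\mu_{0,T_0}(\hat z)\leq V_\mu(N)-\eta/2. \]
On the other hand $\Pi_1\tilde z(t^*)\in N_{\delta_1}$ forces $V_\mu(\Pi_1\tilde z(t^*))>V_\mu(N)-\eta/4$, a contradiction.

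Putting this together, for $\rho<\rho_0$ and $z_0\in B_\H((1+M_\mu)\rho)$ the skeleton-level claim ensures that whenever $z^\mu_{\epsilon,z_0}$ is within $\delta_0$ of some $\hat z\in K^\mu_{0,T_0}(V_\mu(N)-\eta/2)$ and $\tau^{\mu,\epsilon}_{z_0,\rho}\leq T_0$ with $\Pi_1 z^\mu_{\epsilon,z_0}(\tau^{\mu,\epsilon}_{z_0,\rho})\in N$, the hypotheses of the claim would be met. Hence
\[ \Pro(\Pi_1 z^\mu_{\epsilon,z_0}(\tau^{\mu,\epsilon}_{z_0,\rho})\in N)\leq \Pro(\tau^{\mu,\epsilon}_{z_0,\rho}>T_0)+\Pro\bigl(\textnormal{dist}_\H(z^\mu_{\epsilon,z_0},K^\mu_{0,T_0}(V_\mu(N)-\eta/2))>\delta_0\bigr). \]
Applying \eqref{LDP-upper-bound} to the second probability and the choice of $T_0$ to the first, then taking $\limsup_{\epsilon\to 0}\epsilon\log$, yields a bound of $-V_\mu(N)+\eta/2$ uniformly in $\rho<\rho_0$ and in $z_0$; sending $\rho\to 0$ and then $\eta\to 0$ completes the proof. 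The main obstacle is the skeleton-level claim: because $\hat z(0)$ generally fails to lie in $\H_{1+2\beta}$, the continuity of $V^\mu$ from Theorem \ref{continuity-of-V-mu-tilde-thm} is unavailable directly; the trick of restarting the optimal control $\psi$ from the origin --- so that $V^\mu$ is evaluated only at points reachable by skeleton trajectories from $0$ --- is what bypasses this regularity defect via Lemma \ref{lemma1}.
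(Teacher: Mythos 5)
Your argument is correct, and at the structural level it is the same as the paper's: the same two-term decomposition $\P(\Pi_1 z^\mu_{\epsilon,z_0}(\tau^{\mu,\epsilon}_{z_0,\rho})\in N)\le \P(\tau^{\mu,\epsilon}_{z_0,\rho}>T)+\P(\hbox{hit }N\hbox{ by time }T)$, with the first term controlled by Lemma \ref{stay-out-of-small-ball-lem} and the second by the uniform LDP upper bound \eqref{LDP-upper-bound} once one knows that no low-action skeleton path starting near $0$ in $\H$ can bring its first component close to $N$. The only real difference is how that exclusion step is established: the paper quotes Lemma \ref{far-from-N-lem}, proved by a sequential compactness/contradiction argument and yielding a tube of radius $(1+M_\mu)\rho$ around $K^\mu_{0,T}(A)$, whereas you re-derive the same fact in-line and quantitatively, using the positive distance between the closed set $N$ and the compact sublevel set $K_\mu(V_\mu(N)-\eta/4)$ of Theorem \ref{fine1}, combined with exactly the same restart-the-optimal-control-from-the-origin trick based on Lemma \ref{lemma1}. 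Your version buys a tube of fixed radius $\delta_0$ (independent of $\rho$), which is if anything cleaner than the paper's $\rho$-dependent tube, and it sidesteps the fact that the literal conclusion of Lemma \ref{far-from-N-lem} (distance $>|z(0)|_\H$) is nominally weaker than the radius $(1+M_\mu)\rho$ the paper's tube argument asks for. Two small repairs: since Lemma \ref{stay-out-of-small-ball-lem} is stated for a fixed $\rho$, the horizon $T_0$ must be chosen after $\rho$ is fixed (as in the paper); this is harmless because your skeleton-level claim and the choice of $\delta_0,\rho_0$ do not depend on $T_0$. Also, the case $V_\mu(N)=+\infty$ is not literally "trivial" but follows by running your argument with an arbitrary level $A$ in place of $V_\mu(N)-\eta/2$, and the degenerate instance $t^*=0$ (or $V_\mu(N)\le\eta/4$) should be dismissed separately, e.g.\ noting the bound is vacuous when $V_\mu(N)=0$ and taking $\eta$ small otherwise.
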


\begin{Lemma}
\label{lemma5}
  For fixed $\rho>0$,
  \begin{equation*} \label{time-lower-bound-eq}
    \lim_{t \to 0} \limsup_{\epsilon \to 0} \epsilon \log \left( \sup_{z_0 \in\,B_{\H}(\rho)} \P \left( \sup_{s \leq t} \left| z^{\mu}_{\epsilon,z_0}(s) \right|_\H \ge (1 + M_\mu) \rho \right) \right) = -\infty.
  \end{equation*}

\end{Lemma}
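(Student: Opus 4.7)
The plan is to decompose the mild solution into the deterministic part driven by the initial condition, the nonlinear remainder, and the stochastic convolution, then show that only the stochastic convolution can be responsible for the process exceeding the level $(1+M_\mu)\rho$ when $t$ is small, and finally bound that stochastic contribution by Gaussian concentration. Concretely, writing $z^{\mu}_{\epsilon,z_0}$ in mild form,
\[
z^{\mu}_{\epsilon,z_0}(s) = \Smu(s) z_0 + \int_0^s \Smu(s-r) B_\mu(z^{\mu}_{\epsilon,z_0}(r))\,dr + \sqrt{\epsilon}\,W^\mu(s),
\]
where $W^\mu(s) := \int_0^s \Smu(s-r) \Qmu\,dw(r)$. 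By \eqref{m6}, the first term is bounded in $\H$ by $M_\mu|z_0|_\H \leq M_\mu \rho$. Using Hypothesis \ref{H2}, the continuous embedding $H\hookrightarrow H^{-1}$, and \eqref{m17}, the nonlinear drift contributes at most $c(\mu) \int_0^s |z^{\mu}_{\epsilon,z_0}(r)|_\H\,dr$.

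Applying Gronwall's inequality to the resulting estimate, one obtains, for any $t$ in a bounded range,
\[
\sup_{s\leq t} \left|z^{\mu}_{\epsilon,z_0}(s)\right|_\H \leq \left( M_\mu \rho + \sqrt{\epsilon}\sup_{s\leq t} |W^\mu(s)|_\H \right) e^{c(\mu) t}.
\]
Fix $\eta\in(0,1)$ and pick $t_0=t_0(\mu)>0$ small enough so that $e^{c(\mu)t}(M_\mu+\eta/2) \leq M_\mu + \eta$ for every $t\leq t_0$. On the event $\{\sup_{s\leq t}|z^\mu_{\epsilon,z_0}(s)|_\H \geq (1+M_\mu)\rho\}$, with $t\leq t_0$, this forces $\sqrt{\epsilon}\sup_{s\leq t}|W^\mu(s)|_\H \geq \kappa\rho$ for some $\kappa=\kappa(\mu)>0$ independent of $z_0$ and $\epsilon$. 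The bound is therefore uniform in $z_0\in B_\H(\rho)$, so
\[
\sup_{z_0 \in B_\H(\rho)} \P\!\left(\sup_{s\leq t} |z^\mu_{\epsilon,z_0}(s)|_\H \geq (1+M_\mu)\rho\right) \leq \P\!\left(\sup_{s\leq t}|W^\mu(s)|_\H \geq \frac{\kappa\rho}{\sqrt{\epsilon}}\right).
\]

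It remains to show that the right-hand side decays doubly-exponentially in $1/\epsilon$ with a rate that blows up as $t\downarrow 0$. Since, by Hypothesis \ref{H1}, $\Smu(\cdot)\Qmu$ is Hilbert--Schmidt on $H\to\H$ with $\int_0^t \|\Smu(r)\Qmu\|^2_{HS(H;\H)}\,dr =: \sigma^2_\mu(t) \to 0$ as $t\to 0$, the process $W^\mu$ is a centered $\H$-valued Gaussian process with a.s.\ continuous trajectories. The Borell--Tsirelson--Ibragimov--Sudakov inequality (or Fernique's theorem) applied to the Gaussian random variable $\sup_{s\leq t}|W^\mu(s)|_\H$ gives, for any $M$ larger than the median $m_\mu(t)$,
\[
\P\!\left(\sup_{s\leq t}|W^\mu(s)|_\H > M\right) \leq 2\exp\!\left(-\frac{(M - m_\mu(t))^2}{2\,\sigma^2_\mu(t)}\right),
\]
where both $m_\mu(t)$ and $\sigma_\mu(t)$ vanish as $t\downarrow 0$. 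Taking $M=\kappa\rho/\sqrt{\epsilon}$ and multiplying by $\epsilon$ yields
\[
\epsilon \log \P\!\left(\sup_{s\leq t}|W^\mu(s)|_\H > \frac{\kappa\rho}{\sqrt{\epsilon}}\right) \leq \epsilon\log 2 - \frac{(\kappa\rho - \sqrt{\epsilon}\,m_\mu(t))^2}{2\,\sigma^2_\mu(t)}.
\]
Letting $\epsilon\downarrow 0$ first and then $t\downarrow 0$, the right-hand side tends to $-\infty$, which is the desired conclusion.

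The main potential obstacle is verifying the Hilbert--Schmidt integrability of $\Smu(\cdot)\Qmu$ uniformly enough to get $\sigma_\mu(t)\to 0$ and to invoke a clean Gaussian tail bound for the supremum; this, however, is guaranteed by \eqref{m1} in Hypothesis \ref{H1} (which, combined with \eqref{m18}, makes $Q$ act as a smoothing operator of the required order) together with the explicit spectral decomposition of $\Smu(t)$ used earlier in the paper.
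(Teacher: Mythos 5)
Your argument is correct, but it follows a genuinely different route from the paper. The paper never touches the stochastic convolution directly: it works at the level of the skeleton equation, showing by the same mild-form estimate (absorbing the drift term for small $t$ rather than via Gronwall) that any controlled path starting in $B_{\H}(\rho)$ which rises above $(M_\mu+\tfrac12)\rho$ before time $t$ must use a control of norm at least $E_\mu(t)$, with $E_\mu(t)\to+\infty$ as $t\to 0$; the event in question then forces $z^\mu_{\e,z_0}$ to be at distance $\rho/2$ from the level set $K^\mu_{0,t}\bigl(\tfrac12 E_\mu(t)^2\bigr)$, and the uniform LDP upper bound \eqref{LDP-upper-bound} immediately yields the rate $-E_\mu(t)^2/2$. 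You instead bypass the large deviation principle entirely: after Gronwall you reduce the event to $\{\sqrt{\e}\sup_{s\le t}|W^\mu(s)|_{\H}\ge\kappa\rho\}$ and invoke Borell--TIS for the $\H$-valued Gaussian stochastic convolution, with $\sigma^2_\mu(t)=\int_0^t\|\Smu(r)\Qmu\|^2_{HS(H;\H)}\,dr\to 0$; this quantity is indeed finite and vanishing, since Hypothesis \ref{H1} gives $\sum_k\lambda_k^2/\alpha_k<\infty$, which is exactly what makes the mild solution $\H$-valued with continuous paths. What the paper's route buys is economy: the probabilistic work is already packaged in \eqref{LDP-upper-bound} (which is uniform in $z_0$), so the proof is a purely deterministic energy estimate, consistent with how the other exit-problem lemmas are handled. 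What your route buys is self-containedness and a direct exploitation of the additive Gaussian structure, giving an explicit rate $-\kappa^2\rho^2/2\sigma^2_\mu(t)$ of the same quality; the price is that you must justify the Gaussian concentration ingredients (Hilbert--Schmidt integrability, a.s.\ boundedness of the supremum, finiteness of the median), all of which are available here. Two cosmetic points: the sentence choosing $t_0$ via $\eta$ is garbled --- what you actually need, and what your Gronwall bound gives, is $\kappa:=(1+M_\mu)e^{-c(\mu)t_0}-M_\mu>0$; and you do not need $m_\mu(t)\to 0$, only its finiteness, since it appears multiplied by $\sqrt{\e}$.
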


\medskip

\begin{proof}[Proof of Theorem \ref{m-t81}]   As $G \subset H$ is a bounded set, there exists $R>0$ such that $G\subset B_H(R-1)$
If $c(\mu,1)$ is the constant from Lemma \ref{large-init-veloc-causes-exit-lem}, for any $z_0=(u_0,v_0) \in\,\H$ such that
\[u_0 \in\,G,\ \ \ \ |v_0|_{H^{-1}}>R\,c(\mu,1)^{-1}=:\kappa,\]  we have that  $\Pi_1 z^\mu_{z_0}$ leaves $B_{R}$, (and therefore  $G$) before time $t=1$.
Since for any $T>0$
\begin{equation}
\label{m-fine225}
\lim_{\e\to 0}\sup_{z_0 \in\,\H}\E\,|z^\mu_{\e,z_0}-z^\mu_{z_0}|_{C([0,T];\H)}=0,
\end{equation}
this yields
  \begin{equation}
   \label{exit-time-finite-in-prob-eq}
 \begin{array}{l}
 \ds{   \lim_{\epsilon \to 0} \inf_{\substack{ u_0 \in G \\ |v_0|_{H^{-1}} > \kappa}} \P \left( \tau^{\mu,\epsilon}_{z_0} < 1 \right) \geq  \lim_{\epsilon \to 0} \inf_{\substack{ u_0 \in G \\ |v_0|_{H^{-1}} > \kappa}} \P \left(  \left| z^\mu_{\e,z_0}- z^\mu_{z_0}\right|_{C([0,T];\H)} \leq 1 \right) = 1.}
  \end{array}
  \end{equation}

 Now, fix $\eta>0$.  According to \eqref{m-minimum}, there exists $x_{G,\mu} \in\,\partial G\cap H^{1+2\beta}$ such that $V_\mu(x_{G,\mu})=V_\mu(\partial G)$. Now, if $\{x_n\} \subset \bar{G}^c\cap H^{1+2\beta}$ is a sequence from \eqref{boundary-reg-assum} such that $x_n\to x_{G,\mu}$ in $H^{1+2\beta}$, as $n\to\infty$,  due to Theorem \ref{continuity-of-V-mu-tilde-thm} we have that $V_\mu(x_n)\to V_\mu(x_{G,\mu})$. This means that there exists $\bar{n}$ such that
\[V_\mu(x_{\bar{n}})<V_\mu(x_{G,\mu})+\frac \eta 4=V_\mu(\partial G)+\frac \eta 4.\]
In particular, there exists
$T_1>0$ and $z^{\mu}_{\psi,0} \in C([0,T_1];\H)$ such that $z^{\mu}_{\psi,0} (0) = 0$ and $\Pi_1 z^{\mu}_{\psi,0} (T_1)=x_{\bar{n}} \in \bar{G}^c$  with
 \[I^\mu_{0,T_1}(z^{\mu}_{\psi,0} ) < V_\mu(x_{\bar{n}})+\frac \eta 4<V_\mu(\partial G) + \frac{\eta}{2}.\]

According to \eqref{continuity-of-control-system}, the mapping $z_0 \in\,\H \mapsto \varphi^\mu_{\psi,z_0} \in C([0,T_1];\H)$ is continuous, and therefore, we can find  $\rho>0$ such that
  \begin{equation*}
    |z_0|_\H < \rho \Longrightarrow \text{dist}\left( z^\mu_{\psi,z_0}(T_1), (G \times H^{-1}) \right) > \frac 12 \text{dist}\left( z^\mu_{\psi,0}(T_1), (G \times H^{-1}) \right)=:\a>0.
  \end{equation*}
In view of \eqref{LDP-lower-bound}, we can see that there exists $\epsilon_1>0$ such that for all $\epsilon< \epsilon_1$, and all $|z_0|_\H<\rho$
  \begin{equation} \label{exit-prob-small-init-cond}
    \P \left( \tau^{\mu,\epsilon}_{z_0} < T_1 \right) \ge \P \left( \left| z^{\mu}_{\epsilon,z_0} - z^{\mu}_{\psi,z_0} \right|_{C([0,T_1];\H)} < \a \right) \ge e^{-\frac{1}{\epsilon}( V_\mu(G) + \eta)}.
  \end{equation}
 Now, by Lemma \ref{unperturbed-system-conv-to-zero-lem} we can find $T_2>$ such that
  \begin{equation*}
    \sup_{\substack{u_0 \in G \\ |v_0|_{H^{-1}} \leq \kappa}} \left| z^{\mu}_{z_0}(T_2) \right|_\H < \frac{\rho}{2}.
  \end{equation*}
Therefore, thanks to \eqref{m-fine225}, there exists $0<\e_2\leq \e_1$ such that
 $u_0 \in G$, and $|v_0|_{H^{-1}} \leq \kappa$,
  \begin{equation*}
     \P \left( \left|z^{\mu}_{\epsilon,z_0}(T_2) \right|_\H < \rho \right) > \frac{1}{2},\ \ \ \ \e\leq \e_2.
  \end{equation*}
Thanks to \eqref{exit-prob-small-init-cond}, by the
Markov property, this implies that for $u_0 \in G$ and $|v_0|_{H^{-1}} \leq \kappa$,
  \begin{equation*}
     \P \left( \tau^{\mu,\epsilon}_{z_0} < T_1 + T_2 \right) \ge \frac{1}{2} e^{-\frac{1}{\epsilon} (V_\mu(G) + \eta)},\ \ \ \ \e<\e_2.
  \end{equation*}
Hence, if we combine this with \eqref{exit-time-finite-in-prob-eq}, we see that there exists $0<\epsilon_0\leq \e_2$ such that for all $\epsilon< \epsilon_0$,
  \begin{equation}
    \inf_{z_0 \in G \times H^{-1}} \P \left( \tau^{\mu,\epsilon}_{z_0} < 1+ T_1 + T_2 \right) \ge \frac{1}{2} e^{-\frac{1}{\epsilon} (V_\mu(G) + \eta)}.
  \end{equation}

By using again the Markov property, for any $k \in \mathbb{N}$ and  $z_0 \in G \times H^{-1}$ this gives
  \[
\begin{array}{l}
\ds{\P \left( \tau^{\mu,\epsilon}_{z_0} \ge k ( 1+ T_1 + T_2) \right)
     \leq \left(\sup_{z_0 \in G \times H^{-1}} \P( \tau^{\mu,\epsilon}_{z_0} \ge (1+ T_1 + T_2)) \right)^k
     \leq  \left( 1 - \frac{1}{2} e^{-\frac{1}{\epsilon} (V_\mu(G) + \eta)} \right)^k,}
  \end{array}\]
so that
  \[\begin{array}{l}
\ds{\E \left( \tau^{\mu,\epsilon}_z \right) \leq (1+T_1+T_2) \sum_{k=0}^\infty \P \left( \tau^{\mu,\epsilon}_z \ge k(1 + T_1 +T_2) \right)\leq 2(1+T_1+T_2) e^{\frac{1}{\epsilon}(V_\mu(G) + \eta)}.}
\end{array}
  \]

  Thus, the upper bound of \eqref{m-con1} follows as $\eta$ was chosen arbitrarily small and
the upper bound of \eqref{m-con2}, follows from this by using the Chebyshev inequality.
\end{proof}

\medskip

The proofs of the lower bound for the exit time and of the exit place  follow from Lemmas \ref{lemma1} to \ref{lemma5}, by using the same arguments used in the finite dimensional case (see  \cite{dz} and \cite{fw}). For this reason, we omit them.

\subsection{Proofs of Lemmas from \ref{lemma1} to \ref{lemma5}}

\begin{proof}[Proof of Lemma \ref{lemma1}]  If we let $\varphi(t) = \Pi_1 \left( z^\mu_{\psi,z_1}(t) - z^{\mu}_{\psi,z_2}(t) \right)$, then it is a weak solution to
  \begin{equation}
    \mu \frac{\partial^2 \varphi}{\partial t^2}(t) + \frac{\partial \varphi}{\partial t}(t) = A \varphi(t) + B(\Pi_1 z^{\mu}_{z_1,\psi}(t)) - B(\Pi_1 z^{\mu}_{z_2,\psi}(t)).
  \end{equation}
Therefore, we can conclude as in  Lemma \ref{X-sup-bounded-lemma}.
\end{proof}

\begin{proof}[Proof of Lemma \ref{far-from-N-lem}]  Fix $A< V_\mu(N)$. Suppose by contradiction that there exist $\{z_n\}\subset \H$, $\{T_n\}\subset (0,+\infty)$ and $\{\psi_n\}\subset L^2((0,T_n);H)$ such that
\[\lim_{n\to \infty}|z_n|_\H=0,\ \ \ \ \ \frac{1}{2} |\psi_n|_{L^2((0,T_n);H)}^2 < A,\]
and
\[\text{dist}_H(\Pi_1 z^\mu_{\psi_n,z_n}(T_n), N) \leq |z_n|_\H.\]

Now, if we set $x_n:=\Pi_1 z^\mu_{\psi_n,0}(T_n)$, for any $n \in\,\nat$ we have, by \eqref{continuity-of-control-system},
\[|x_n-\Pi_1 z^\mu_{\psi_n,z_n}(T_n)|_H\leq c(\mu)\, |z_n|_\H,\]
so that
  \begin{equation} \label{dist-from-N-eq}
    \text{dist}_H ( x_n, N) \leq c(\mu) |z_n|_\H+ |z_n|_\H,\ \ \ \ n \in\,\nat.
  \end{equation}
Recalling how $V_\mu$ is defined, this implies
  \begin{equation*}
    V_\mu(x_n) \leq \frac{1}{2} |\psi_n|^2_{L^2((0,T_n);H)} < A.
  \end{equation*}

 Now, as proven in Theorem \ref{fine1}, $V_\mu$ has compact level sets.  Therefore, there is a sequence  $\{x_{n_k}\} _k\subset H$ such that $x_{n_k}\to x$, so that $V_\mu(x) < A$.  But, by \eqref{dist-from-N-eq}, $x \in N$, and then $V_\mu(N) \leq V_\mu(x)<V_\mu(N)$, a contradiction.
\end{proof}

\begin{proof}[Proof of Lemma \ref{stay-out-of-small-ball-lem}]   Fix $R>\sup_{x \in G} |x|_H+\rho$ and, by Lemma \ref{large-init-veloc-causes-exit-lem}, let us take $\kappa>0$ such that if $v_0 \in\,B_{H^{-1}}(\kappa)$ then  $z^{\mu}_{z_0}$  leaves $B_R \times H^{-1}$ before time $t=1$.

By Lemma \ref{unperturbed-system-conv-to-zero-lem}, we can find $T_1>0$ such that
  \begin{equation*}
    \sup_{\substack{u_0 \in G \\ |v_0|_{H^{-1}} \leq \kappa}} \left| z^{\mu}_{z_0}(T_1) \right|_{\H} < \frac{\rho}{2},
  \end{equation*}
and then for any $z_0 \in G \times H^{-1}$,  $z^{\mu}_{z_0}(t)$ leaves $(G \times H^{-1} )\setminus B_{\H}(\rho/2)$ in less than time $T=T_1+1$.  This means that
  \begin{equation}
    \inf \left\{ I^\mu_{0,T}(z): z(t) \in (B_H(R) \times H^{-1}) \setminus B_{\H}(\rho/2) \text{ for } t \in [0,T]  \right\} =a >0
  \end{equation}
  because the set above contains no unperturbed trajectories.
By \eqref{LDP-upper-bound}
 \[\begin{array}{l}
\ds{\limsup_{\epsilon \to 0}  \epsilon \log \left( \sup_{z_0 \in G \times H^{-1}} \P \left( \tau_0 \ge T \right) \right)} \\
\vs
\ds{\leq \limsup_{\epsilon \to 0} \epsilon \log \left( \sup_{z_0 \in G \times H^{-1}} \P \left( \text{dist}_{C([0,T];\H) }(z^{\mu}_{\epsilon,z_0}, K^\mu_{0,T}(a)) > \frac{\rho}{2} \right) \right)\leq -a.}
\end{array}\]
  By the Markov property, for any $k \in \mathbb{N}$,
  \begin{equation*}
    \sup_{z_0 \in G \times H^{-1}} \P(\tau_0 \ge kT) \leq \left(\sup_{z_0 \in G \times H^{-1}} \P(\tau_0 \ge T) \right)^k
  \end{equation*}
  and therefore,
  \begin{equation*}
    \lim_{\epsilon \to 0}  \epsilon \log \left( \sup_{z_0 \in G \times H^{-1}} \P \left( \tau_1 \ge Tk \right) \right) \leq -ka.
  \end{equation*}

\end{proof}

\begin{proof}[Proof of Lemma \ref{lemma4}]   Let $\Gamma_\rho := B_{\H}((1 + M_\mu)\rho).$
For any $T>0$ we have
\begin{equation}
\label{m-con4}  \begin{array}{l}
\ds{\sup_{z_0 \in \Gamma_\rho} \P\left( \Pi_1 z^{\mu}_{\epsilon,z_0} (\tau^{\mu,\epsilon}_{z_0,\rho}) \in N \right) \leq \sup_{z_0 \in \Gamma_\rho} \P (\tau^{\mu,\epsilon}_{z_0,\rho} >T) + \sup_{z_0 \in \Gamma_\rho} \P( \Pi_1 x^{\mu}_{\epsilon,z_0}}(t) \in N, \text{ for some } t \leq T).
  \end{array}\end{equation}

Next, thanks to Lemma \ref{far-from-N-lem}, for any $A< V_\mu(N)$ fixed we can find $\rho_0>0$ such that for $\rho<\rho_0$ and any $T>0$, the set
\[    \left\{ z: z(0) \in \Gamma_\rho, \, \text{dist}_{C([0,T];\H)} \le(z, K^\mu_{0,T}(A)\r) \leq (1+M_\mu)\rho \right\}
  \]
  contains no trajectories that reach $N$ by time $T$.
Then by \eqref{LDP-upper-bound}, for any $\eta>0$, for small enough $\epsilon>0$,
\[  \begin{array}{l}
\ds{\sup_{z_0 \in \Gamma_\rho} \P(\Pi_1 z^{\mu}_{\epsilon,z_0}(t) \in N \text{ for some } t \leq T)}\\
\vs
\ds{
\leq \sup_{z_0 \in \Gamma_\rho} \P \le( \text{dist}_{C([0,T];\H)} ( z^{\mu}_{\epsilon,z_0}, K^\mu_{0,T}(A)) > (1 + M_\mu) \rho\r) \leq e^{-\frac{1}{\epsilon}(A-\eta)}.}
  \end{array} \]
Now, according to \eqref{stay-out-of-small-ball-eq}, we pick $T>0$ so that,  for small enough $\epsilon>0$,
\[
    \sup_{z_0 \in \Gamma_\rho} \P (\tau^{\mu,\epsilon}_{z_0,\rho}>T) \leq e^{-\frac{1}{\epsilon}(A)}.
  \]
Due to \eqref{m-con4}, this implies our result,
as $A <V_\mu(N)$ and $\eta>0$ were arbitrary.
\end{proof}

\begin{proof}[Proof of Lemma \ref{lemma5}]  If $z(t) = z^\mu_{\psi,z_0}(t)$, then
  \[
    z(t) = \Smu(t) z_0 + \int_0^t \Smu(t-s) B_\mu(z(s)) ds + \int_0^t \Smu(t-s) \Qmu \psi(s) ds,
  \]
so that, if $z_0 \in\,B_{\H}(\rho)$,
\[\begin{array}{l}
\ds{\sup_{s \leq t} |z(s)|_\H \leq M_\mu \rho + \frac{\gamma_0 t M_\mu}{\mu \sqrt{\alpha_1}} \sup_{s \leq t} |z(s)|_\H+ \frac{M_\mu\|Q\|_{L(H)}}{\mu\sqrt{\alpha_1}}\sqrt{t} |\psi|_{L^2((0,t);H)}.}
  \end{array}\]
Therefore, if $\displaystyle \sup_{s \leq t} |z(s)| \ge \left(M_\mu+1/2 \right) \rho$, then we get
  \begin{equation*}
    E_\mu(t):=\rho \left(\frac{1}{2} - \frac{\gamma_0 tM_\mu}{\sqrt{\alpha_1}\mu} \right) \frac{\mu\sqrt{\alpha_1}}{M_\mu\sqrt{t}}\leq | \psi |_{L^2((0,t);H)}.
  \end{equation*}
This means that
\[\begin{array}{l}
\ds{\limsup_{\epsilon \to 0} \epsilon \log \left(\sup_{z_0 \in\,B_{\H}(\rho)} \P \left( \sup_{s \leq t} \left| z^{\mu}_{\epsilon,z_0}(s) \right|_\H \geq (1 + M_\mu) \rho \right)  \right)}\\
 \vs
 \ds{\leq \limsup_{\epsilon \to 0} \epsilon \log \left( \sup_{z_0 \in\,B_{\H}(\rho)} \P \left( \text{dist}_{C([0,t];\H)} \left(z^{\mu}_{\epsilon,z_0}, K^\mu_{0,t} \left(\frac{1}{2} (E_\mu(t))^2 \right) \right) > \frac{\rho}{2} \right) \right) \leq -\frac{(E_\mu(t))^2}{2},}
  \end{array}\]
  and our result follows as
  \[\lim_{t \to 0} E_\mu(t) = +\infty.\]
\end{proof}

\end{document}